\documentclass[reqno]{amsart}

\usepackage{eucal}
\usepackage{latexsym}
\usepackage{amsmath}
\usepackage{amssymb}
\usepackage{amscd}
\usepackage{multicol}
\usepackage{amsfonts}
\usepackage{amsthm}
\usepackage{xcolor}
\usepackage{color}
\usepackage{graphicx}
\usepackage[all]{xy}
\usepackage{epsfig}
\usepackage{psfrag}
\usepackage{hyperref}
                                                                                
\setcounter{tocdepth}{1}

\input xy
\xyoption{all}

\theoremstyle{plain}
\newtheorem{thm}{Theorem}
\newtheorem{theorem}[thm]{Theorem}
\newtheorem{corollary}[thm]{Corollary}
\newtheorem{lemma}[thm]{Lemma}
\newtheorem{prop}[thm]{Proposition}
\newtheorem{defin}[thm]{Definition}

\newtheoremstyle{exm}{9pt}{9pt}{}{}{\bfseries}{}{.5em}{}
\theoremstyle{exm}
\newtheorem{exm}[thm]{Example}
\newtheoremstyle{rmk}{9pt}{9pt}{}{}{\bfseries}{}{.5em}{}
\theoremstyle{rmk}
\newtheorem{rmk}[thm]{Remark}
\theoremstyle{alg}

\newtheoremstyle{question}{9pt}{9pt}{}{}{\bfseries}{}{.5em}{}
\theoremstyle{question}

\numberwithin{equation}{section}
\numberwithin{thm}{section}
\numberwithin{figure}{section}

\newcommand{\g}{\mathfrak{t}}
\newcommand{\gd}{\mathfrak{t}^{*}}

\newcommand{\il}{\mathsf{i}}
\newcommand{\ee}{\mathsf{e}_{\T}^-}
\newcommand{\ii}{\textrm{Ind}}

\newcommand{\pp}{\mathbb{P}}

\newcommand{\e}[1]{e^{2 \pi \il {#1}}}
\newcommand{\p}{\textrm{pt}}
\newcommand{\KK}{\mathbb{K}}
\newcommand{\Ke}{\mathcal{K}_\T}
\newcommand{\K}{\mathcal{K}_{\T}(M)}
\newcommand{\KT}{\mathcal{K}_{\T}(M^{\T})}
\newcommand{\T}{\mathbb{T}}
\newcommand{\Mt}{M^{\T}}
\renewcommand{\SS}{{\mathbb{S}}}
\newcommand{\R}{\mathbb{R}}
\newcommand{\Z}{{\mathbb{Z}}}
\newcommand{\C}{{\mathbb{C}}}

\newcommand{\Q}{\mathbb{Q}}

\title[Canonical bases]{Canonical bases for the equivariant cohomology and K-theory rings of symplectic toric manifolds}

\author[M. Pabiniak]{Milena Pabiniak}
\address{Departamento de Matem\'{a}tica, Centro de An\'{a}lise Matem\'{a}tica, Geometria e Sistemas Din\^{a}micos-LARSYS, Instituto Superior T\'ecnico, Av. Rovisco Pais 1049-001 Lisbon, Portugal}
\email{milenapabiniak@gmail.com}

\author[S. Sabatini]{Silvia Sabatini}
\address{Mathematisches Institut, Universit\"at zu K\"oln, Weyertal 86-90, D-50931 K\"oln, Germany}
\email{sabatini@math.uni-koeln.de}

\date{\today}

\begin{document}
\begin{abstract}
Let $M$ be a symplectic toric manifold acted on by a torus $\mathbb{T}$.
In this work we exhibit an explicit basis for the
equivariant K-theory ring $\mathcal{K}_{\mathbb{T}}(M)$ which is canonically associated
to a generic component of the moment map. We provide
a combinatorial algorithm for computing the restrictions of the elements of this basis
to the fixed point set; these, in turn, determine the ring structure of $\mathcal{K}_{\mathbb{T}}(M)$.
The construction is based on the notion of local index 
 at a fixed point, similar to that introduced by Guillemin and Kogan in \cite{GK}.

We apply the same techniques to exhibit an explicit basis for the equivariant cohomology ring $H_\T(M; \mathbb{Z})$ which is canonically
associated to a generic component of the moment map. Moreover we prove that the elements of this basis coincide
with some well-known sets of classes: the equivariant Poincar\'e duals to the closures of unstable manifolds, and also
 the canonical classes introduced by
Goldin and Tolman in \cite{GT}, which exist whenever the moment map is index increasing.
\end{abstract}
\maketitle

\tableofcontents
\section{Introduction}\label{intro} 

Let $(M^{2n},\omega)$ be a compact symplectic toric manifold of dimension $2n$, i.e.\ a compact symplectic manifold
equipped with an effective Hamiltonian action of an $n$-dimensional torus $\T$ with Lie algebra $\mathfrak{t}$, and let $\psi\colon M\to \mathfrak{t}^*$
be a moment map for the action. Moreover assume that the fixed point set $M^\T$ is discrete.
 Let $\mathbb{H}_\T(M)$ denote  
either the equivariant cohomology ring with $\Z$ coefficients or the equivariant K-theory ring of $M$. 
The inclusion $i\colon \Mt \hookrightarrow M$ is $\T$-equivariant, hence it induces a map 
$$i^*\colon \mathbb{H}_\T(M) \to \mathbb{H}_\T(M^\T)\,.$$
Since $M^\T$ is discrete the map $i^*$ is always injective (cf.\ \cite{Ki} for a proof in the equivariant cohomology setting and \cite{GK} for the equivariant K-theory ring). Therefore $\mathbb{H}_\T (M)$ can be viewed as a subring of $\mathbb{H}_\T(M^\T)$.

The main goal of this paper is to analyze the ring structure of $\mathbb{H}_\T (M)$ and construct
 an explicit basis of $\mathbb{H}_\T(M)$ canonically associated to a generic component $\mu$ of the moment map $\psi$ (see page \pageref{ordering fixed points}). The restrictions of the basis elements to the fixed points, i.e. their images in $\mathbb{H}_\T(M^\T)$, determine the equivariant structure constants. 
 
 A result of Kirwan guarantees that there always exists a basis for $\mathbb{H}_\T(M)$ associated to a generic component of the moment map, the elements of this basis being called \emph{Kirwan classes} (see \cite{Ki} for the equivariant cohomology setting, and Proposition \ref{existence kirwan} for a generalization of this idea to equivariant K-theory). This basis is, however, not unique. 
 Several authors have added different conditions that would ensure this basis to be unique, i.e. to be \emph{canonically} associated to a generic component of the moment map. 
 For example, Guillemin and Zara \cite{GZ,GZgeneratingfamilies} study this problem for the equivariant cohomology ring of \emph{GKM spaces} (see Section \ref{GKM section}).
 The elements of the basis they introduce
are called equivariant Thom classes, and should be thought of as the ``equivariant Poincar\'e duals" to the closures
of the unstable manifolds $W^u(p)$ of a generic component $\mu$ of the moment map with respect to an invariant metric.
When such closures are smooth, these equivariant Poincar\'e duals can be computed explicitly in terms of
the $\T$ representations on the normal bundle of $\overline{W^u(p)}$. However in general such manifolds are not smooth, and in this case
the restrictions of the equivariant Thom classes to the fixed point set are analyzed by means of the combinatorics of the GKM graph.
 In \cite{GT} Goldin and Tolman study a similar problem on 
Hamiltonian $\T$-spaces, and introduce basis elements for the equivariant cohomology ring which are
canonically associated to a generic component $\mu$ of the moment map.  However in both cases, in order for the basis elements to exist and be unique,
there is an essential requirement on $\mu$: it needs to be \emph{index increasing} (see Definition \ref{ii and nii}). This is satisfied for example when
the stable and unstable manifolds of $\mu$ with respect to an invariant metric meet transversally. 

In \cite{GK} Guillemin and Kogan introduce equivariant K-theory classes which are a basis of the equivariant K-theory ring $\K$
of a Hamiltonian $\T$-space (as a module over the equivariant K-theory of a point). 
However no explicit connection is given between such basis and the ``natural" basis given by the K-theoretical equivariant Poincar\'e duals
to the closures of the unstable manifolds (in the case in which there exists an invariant metric for which these are all smooth).  
To express the extra conditions imposed on Kirwan classes, Guillemin and Kogan introduce the \emph{local index map}, that associates to each class in 
$\K$ and each fixed point $q \in M^\T$ an element of $\mathcal{K}_\T(\p)$. Note that $\mathcal{K}_\T(\p)$ can be identified with $R(\T)$,
the representation ring of $\T$.

Inspired by this idea, we present a slightly different definition of local index of an equivariant K-theory class $\tau$ at a fixed point $q$ $$\ii_q \colon \K \to R(\T),$$
and give an explicit combinatorial recipe for computing it 
 (see Definition \ref{definition local index} in Section \ref{section local index}).
Using this notion we introduce a basis for the equivariant K-theory ring of a symplectic toric manifold $M$ which is canonically associated to a generic component of the moment map, both in the index increasing and non-index increasing case. 
The main result of the paper is the following. Let $F_p$ be the \emph{flow-up} manifold at $p\in M^\T$ (see page~\pageref{def fq}, Section \ref{section local index}),
corresponding to the closure of the unstable manifold at $p$.
\begin{theorem}
Let $(M,\omega,\T,\psi)$ be a symplectic toric manifold of dimension $2n$, together with 
a choice of a generic component of the moment map $\mu \colon M\to \R$. Let $\K$ be the equivariant K-theory ring of $M$.
Then for each $p\in \Mt$ there exists a unique Kirwan class $\tau_p \in \K$, called the {\bf i-canonical class} at the fixed point $p$
(see Definition \ref{definition canonical classes}),
 satisfying
\begin{enumerate}
\item $\ii_q(\tau_p)=1$ for all points $q \in F_p\cap M^\T$;
\item $\ii_q(\tau_p)=0$ for all points $q \notin F_p\cap M^\T$.
\end{enumerate}
Moreover, the set $\{\tau_p\}_{p\in M^\T}$ is a basis for $\Ke(M)$ as a module over $R(\T)$.
\end{theorem}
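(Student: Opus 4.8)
The plan is to turn the statement into a triangular linear-algebra problem over $R(\T)$ and solve it. Order the fixed points $\Mt=\{p_1,\dots,p_N\}$ so that $\mu(p_1)<\dots<\mu(p_N)$; this is possible because a \emph{generic} component of the moment map separates the (isolated) fixed points. By Proposition~\ref{existence kirwan} there is a \emph{Kirwan basis} $\{\kappa_p\}_{p\in\Mt}$ of $\K$ over $R(\T)$: each $\kappa_p$ is supported above $p$, i.e.\ $i_q^*(\kappa_p)=0$ whenever $\mu(q)<\mu(p)$, and $i_p^*(\kappa_p)$ equals the equivariant K-theoretic Euler class of the negative normal bundle of $\mu$ at $p$. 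The only properties of the local index map $\ii_q\colon\K\to R(\T)$ of Definition~\ref{definition local index} used below are: (i) it is $R(\T)$-linear; (ii) $\ii_q(\tau)$ is determined by the restrictions $i_{q'}^*(\tau)$ with $\mu(q')\le\mu(q)$, and vanishes as soon as all of these vanish; and (iii) the normalization is such that $\ii_q(\kappa_q)=1$.

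From (i)--(iii), the $N\times N$ matrix $A=\big[\ii_q(\kappa_p)\big]_{q,p}$, with rows and columns ordered by increasing $\mu$, is lower triangular with $1$'s on the diagonal: if $\mu(p)>\mu(q)$ then $\kappa_p$ restricts to $0$ at every fixed point $q'$ with $\mu(q')\le\mu(q)$, so $\ii_q(\kappa_p)=0$ by (ii), while $\ii_q(\kappa_q)=1$ by (iii). The matrix $C=[c_{q,p}]$ recording the desired values of the local index, with $c_{q,p}=1$ if $q\in F_p\cap\Mt$ and $c_{q,p}=0$ otherwise, is also lower triangular with $1$'s on the diagonal: a fixed point lying in the closure $F_p=\overline{W^u(p)}$ of the unstable manifold has $\mu$-value at least $\mu(p)$, with equality only at $p$ itself, so $c_{q,p}\ne 0$ forces $\mu(q)\ge\mu(p)$, and $c_{p,p}=1$. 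Hence $A$ and $C$ are invertible over $R(\T)$ with lower-triangular unipotent inverses.

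Writing $\tau_p=\sum_{p'}B_{p,p'}\kappa_{p'}$, conditions (1)--(2) say $\ii_q(\tau_p)=c_{q,p}$ for every $q$, i.e.\ $\sum_{p'}A_{q,p'}B_{p,p'}=c_{q,p}$, i.e.\ $A\,B^{\top}=C$, where $B^{\top}$ is the transpose of $B=[B_{p,p'}]$. This forces $B^{\top}=A^{-1}C$, which has entries in $R(\T)$ and is lower triangular unipotent; hence $B$ is upper triangular unipotent. Since $\{\kappa_p\}$ is a basis, it follows that there is a \emph{unique} $\tau_p\in\K$ satisfying (1)--(2). Moreover $B_{p,p}=1$ and $B_{p,p'}=0$ unless $\mu(p')\ge\mu(p)$, so $\tau_p=\kappa_p+\sum_{\mu(p')>\mu(p)}B_{p,p'}\kappa_{p'}$ is again supported above $p$ and satisfies $i_p^*(\tau_p)=i_p^*(\kappa_p)$; therefore $\tau_p$ is a Kirwan class, which proves the first assertion. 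Finally, $B$ is the change-of-basis matrix from $\{\kappa_p\}$ to $\{\tau_p\}$ and is invertible over $R(\T)$, so $\{\tau_p\}_{p\in\Mt}$ is an $R(\T)$-basis of $\Ke(M)$ because $\{\kappa_p\}_{p\in\Mt}$ is.

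The real work, and the step I expect to be the main obstacle, is establishing properties (ii) and (iii) of $\ii_q$ from the combinatorial recipe of Definition~\ref{definition local index}: that the recipe --- which at some stage divides a restriction $i_q^*(\tau)$ by an equivariant Euler class --- actually produces an element of $R(\T)$ rather than only of its fraction field, and that on a Kirwan class this division returns exactly $1$. This is where the toric hypothesis really enters: the flow-up manifolds $F_p$ are smooth $\T$-invariant symplectic submanifolds and the $\T$-weights at the fixed points of $F_p\cap\Mt$ are explicitly controlled, so the division can be performed integrally, by induction along the flow-up, fixed point by fixed point. Once (ii) and (iii) are available, the matrix computation above is purely formal.
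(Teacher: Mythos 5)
Your linear-algebra reduction rests on a claim that is actually false and that the paper explicitly disavows: the local index $\ii_q\colon\K\to R(\T)$ is \emph{not} $R(\T)$-linear. The paper states, right after Proposition~\ref{prop indexproperties}, ``Note that it is \emph{not} always true that $\ii_q(f\,\tau)=f\,\ii_q(\tau)$\dots the local index map $\ii_q\colon\Ke(M)\to R(\T)$ is not a morphism of $R(\T)$-modules,'' with a pointer to the analogous phenomenon in Guillemin--Kogan. This kills the step where you pass from $\ii_q(\tau_p)=c_{q,p}$ to $\sum_{p'}A_{q,p'}B_{p,p'}=c_{q,p}$, i.e.\ $AB^{\top}=C$: that identity needs $\ii_q\bigl(\sum_{p'}B_{p,p'}\kappa_{p'}\bigr)=\sum_{p'}B_{p,p'}\,\ii_q(\kappa_{p'})$, which is exactly the failed linearity. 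Consequently $B^{\top}=A^{-1}C$ is not a legitimate conclusion, and your later appeal to properties of $A^{-1}C$ is unfounded. You also misplace the ``real work'': your items (ii) and (iii) are unproblematic (they are essentially parts (i)--(ii) of Proposition~\ref{prop indexproperties} in the paper; in fact $\ii_q(\tau)$ depends only on $i_q^*(\tau)$, because the Kirwan map $\kappa$ only sees $\tau(q)$); the actual obstruction is item (i).

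The overall strategy is salvageable, and in corrected form it is essentially what the paper does. What \emph{is} true is that $\ii_q$ is additive (Proposition~\ref{prop indexproperties}~(i)), that $\ii_q(\sigma)=0$ whenever $\sigma(q)=0$, and that $\ii_q(\sigma)=g$ whenever $\sigma(q)=g\,\ee(q)$ (Proposition~\ref{prop indexproperties}~(ii)). With just these, the system $\ii_q\bigl(\sum_{p'}B_{p,p'}\kappa_{p'}\bigr)=c_{q,p}$ still unfolds triangularly: for $\mu(p')>\mu(q)$ the $p'$-term contributes $0$; the $p'=q$ term contributes exactly $B_{p,q}$; and the terms with $\mu(p')<\mu(q)$ contribute $\ii_q\bigl(B_{p,p'}\kappa_{p'}\bigr)$, which is a determined but \emph{nonlinear} function of $B_{p,p'}$. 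One therefore solves for $B_{p,q}$ inductively in $q$ rather than by matrix inversion; the triangular/unipotent shape of the solution still follows, so uniqueness, existence, the Kirwan property, and the basis statement all go through. This is precisely the paper's algorithm in the non-index-increasing case (Proposition~\ref{existence canonical}), where the corrections are taken against the specific Kirwan basis $\{\eta_p\}$ of equivariant Poincar\'e duals and the required local-index identities are supplied by Proposition~\ref{prop indexproperties} and Remark~\ref{index eta 1}; the paper also gives a closed-form answer in the index-increasing case (Proposition~\ref{classes from cohomology are canonical}) and a separate GKM-style uniqueness argument (Proposition~\ref{uniqueness}). So: fix the linearity claim, replace the matrix inversion by the one-column-at-a-time inductive solve, and the approach aligns with the paper's.
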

Being a Kirwan class means that $\tau_p(p)=\ee(p)$, the equivariant (K-theoretical) Euler class of the negative normal bundle $N_p^-$ of $\mu$ at $p$, and that $\tau_p(q)=0$ for all $q \in M^\T$ with $\mu(q)<\mu(p)$ (see Proposition \ref{existence kirwan}).

We prove the claims of the above theorem in two separate Propositions: 
uniqueness is proved in Proposition \ref{uniqueness},
existence in Proposition \ref{existence canonical}.

Moreover, we show that in the index increasing case these classes are indeed the equivariant Poincar\'e duals to the flow-up manifolds $F_p$ (see Lemma \ref{def kirwanclasses} and Prop.\ \ref{classes from cohomology are canonical}).

Note that we require our classes $\tau_p$ to have local index $1$ on $F_p$, not only at $p$ as in \cite{GK}. 
As a consequence, the trivial bundle $\mathbf{1}\in \K$ is an element of our basis of i-canonical classes, while it does not belong to the basis exhibited in \cite{GK}.
Moreover, when $M$ is a complex projective space endowed with the standard toric action, the basis of i-canonical classes consists of powers of 
the (equivariant) prequantization line bundle (see Example \ref{projective space}).
Another important advantage of our approach is that 
the local index, and thus also the 
i-canonical classes, is easy to calculate directly from the combinatorics underlying the symplectic toric manifold, as we demonstrate by various examples. 
We give explicit formulas for the elements of this basis when the component of the moment map is index increasing,
 and inductive formulas otherwise. 

The definition of local index can also be translated to the equivariant cohomology setting (Section \ref{ec}), thus allowing to define a
 canonical basis for the equivariant cohomology ring of $M$.
\begin{theorem}\label{main cohomology}
Let $(M,\omega,\T,\psi)$ be a symplectic toric manifold of dimension $2n$, together with 
a choice of a generic component of the moment map $\mu \colon M\to \R$. 
Let $H_\T^*(M;\Z)$ be the equivariant cohomology ring of $M$ with $\Z$-coefficients.
Then for each $p\in \Mt$ there exists a unique Kirwan class $\tau_p \in H_\T^*(M;\Z)$, called the {\bf i-canonical class} at the fixed point $p$
(see Definition \ref{definition canonical classes cohom}), satisfying
\begin{enumerate}
\item $\ii_p(\tau_p)=1$;
\item $\ii_q(\tau_p)=0$ for all points $q \in \Mt \setminus \{p\}$.
\end{enumerate}
The set $\{\tau_p\}_{p\in M^\T}$ is a basis for $H_\T^*(M;\Z)$ as a module over $H_\T^*(\p;\Z)$.
\end{theorem}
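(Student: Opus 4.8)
The plan is to follow the strategy used for the K-theoretic theorem (Propositions~\ref{uniqueness} and~\ref{existence canonical}), now with the cohomological local index $\ii_q\colon H_\T^*(M;\Z)\to H_\T^*(\p;\Z)$ and the combinatorial recipe for it developed in Section~\ref{ec}. Since $\mu$ is a generic component it is injective on $\Mt$, so we may label the fixed points $p_1,\dots,p_N$ with $\mu(p_1)<\cdots<\mu(p_N)$ and use this as a Morse-type ordering; note that this ordering is by moment-map value, not by index, so no index-increasing hypothesis enters. By Kirwan's theorem \cite{Ki} (the cohomological counterpart of Proposition~\ref{existence kirwan}), each $p$ carries a \emph{Kirwan class} $\kappa_p\in H_\T^*(M;\Z)$, characterized by $\kappa_p(p)=e^-(p)$, the equivariant Euler class of the negative normal bundle $N_p^-$, and $\kappa_p(q)=0$ for all $q$ with $\mu(q)<\mu(p)$; moreover any choice of one Kirwan class per fixed point is an $H_\T^*(\p;\Z)$-module basis of $H_\T^*(M;\Z)$. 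Everything below rests on two properties of the local index, which are precisely the content of Section~\ref{ec}: (P1) each $\ii_q$ is $H_\T^*(\p;\Z)$-linear; and (P2) if $\kappa$ is a Kirwan class at $p$ then $\ii_q(\kappa)=0$ for $\mu(q)<\mu(p)$ and $\ii_p(\kappa)=1$. I expect (P1)--(P2) to be the main obstacle: one has to construct the cohomological local index and check that $\ii_q(\tau)$ depends only on the restrictions $\tau(r)$ with $\mu(r)\le\mu(q)$, vanishes when all of these vanish, and is normalized so that the bottom value $e^-(p)$ contributes $1$. This runs parallel to Section~\ref{section local index}, but in a graded setting: $e^-(p)=\prod_{w} w$ (the product over the weights $w$ of $N_p^-$) is now homogeneous of degree equal to the index of $p$, rather than a unit plus higher-order terms, and this homogeneity is exactly why the cohomological conditions are purely diagonal, $\ii_q(\tau_p)=\delta_{pq}$, whereas the K-theoretic ones spread over the whole flow-up $F_p$.

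\emph{Uniqueness.} Let $\tau,\tau'$ be Kirwan classes at $p$ both satisfying (1)--(2) and set $\delta=\tau-\tau'$; then $\delta(q)=0$ for every $q$ with $\mu(q)\le\mu(p)$. Write $\delta=\sum_r a_r\kappa_r$ in the Kirwan basis; if $\delta\ne 0$ let $r_0$ be the $\mu$-smallest fixed point with $a_{r_0}\ne 0$. Evaluating at $r_0$ and using the defining properties of the $\kappa_r$ gives $\delta(r_0)=a_{r_0}e^-(r_0)$, which is nonzero because $H_\T^*(\p;\Z)\cong\Z[x_1,\dots,x_n]$ is an integral domain and $e^-(r_0)$ is a product of nonzero linear forms; hence $\mu(r_0)>\mu(p)$, so $r_0\ne p$. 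Applying $\ii_{r_0}$ and using (P1) together with (P2) — $\ii_{r_0}(\kappa_r)=0$ for every $r$ with $\mu(r)<\mu(r_0)$, so only the $r=r_0$ term survives — we get $a_{r_0}=\ii_{r_0}(\delta)=\ii_{r_0}(\tau)-\ii_{r_0}(\tau')=0$, a contradiction. Thus $\delta=0$.

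\emph{Existence.} We construct $\tau_{p_i}$ by downward induction on $i$. For $i=N$ the point $p_N$ is $\mu$-maximal, so a Kirwan class $\kappa_{p_N}$ vanishes at every $q\ne p_N$ and already satisfies $\ii_q(\kappa_{p_N})=\delta_{p_Nq}$ by (P2); set $\tau_{p_N}=\kappa_{p_N}$. Assume $\tau_{p_{i+1}},\dots,\tau_{p_N}$ have been built, each a Kirwan class at the corresponding point with $\ii_q(\tau_{p_j})=\delta_{p_jq}$ for all $q$. Choose any Kirwan class $\kappa$ at $p_i$ and put
\[
\tau_{p_i}\;=\;\kappa-\sum_{j>i}\ii_{p_j}(\kappa)\,\tau_{p_j}.
\]
Since each $\tau_{p_j}$ with $j>i$ is a Kirwan class at $p_j$, hence vanishes at $p_i$ and at all fixed points below $p_i$, the class $\tau_{p_i}$ is again a Kirwan class at $p_i$. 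Applying $\ii_{p_k}$ and using (P1) with the inductive normalization: for $k>i$, $\ii_{p_k}(\tau_{p_i})=\ii_{p_k}(\kappa)-\ii_{p_k}(\kappa)=0$; for $k=i$, $\ii_{p_i}(\tau_{p_i})=\ii_{p_i}(\kappa)-\sum_{j>i}\ii_{p_j}(\kappa)\cdot 0=1$ by (P2) applied to $\kappa$; and for $k<i$, $\ii_{p_k}(\tau_{p_i})=0$ directly from (P2), since $\tau_{p_i}$ is a Kirwan class at $p_i$ and $\mu(p_k)<\mu(p_i)$. So $\tau_{p_i}$ satisfies (1)--(2); this is the i-canonical class of Definition~\ref{definition canonical classes cohom}.

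\emph{Basis.} Unwinding the recursion, each $\tau_{p_i}$ equals $\kappa_{p_i}$ plus an $H_\T^*(\p;\Z)$-linear combination of the $\kappa_{p_j}$ with $j>i$, so in the $\mu$-ordering the change-of-basis matrix between $\{\kappa_p\}$ and $\{\tau_p\}$ is unipotent triangular over $H_\T^*(\p;\Z)$, hence invertible. Since $\{\kappa_p\}$ is an $H_\T^*(\p;\Z)$-module basis of $H_\T^*(M;\Z)$, so is $\{\tau_p\}$, which together with uniqueness and existence proves the theorem.
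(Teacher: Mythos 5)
Your uniqueness argument is essentially correct and matches the structure of the paper's Proposition~\ref{uniqueness}, even though you cite ``(P1)'' there: what you actually need and use is additivity together with parts (ii)--(iii) of (the cohomological analogue of) Proposition~\ref{prop indexproperties} — the coefficient $a_{r_0}$ comes out by property (ii) applied to $\delta(r_0)=a_{r_0}\Lambda_{r_0}^-$, and the higher terms drop out by property (iii) since $\kappa_r(r_0)=0$ for $r\succ r_0$. No module-linearity is needed for this step.

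The existence argument, however, has a genuine gap, precisely because (P1) is false. The paper states explicitly (after Proposition~\ref{prop indexproperties}, and this carries over to the cohomological index via Remark~\ref{index prop coh}) that $\ii_q(f\,\tau)\neq f\,\ii_q(\tau)$ in general: the local index is \emph{not} an $H^*_\T(\p;\Z)$-module map. One can already see this on $\C\pp^2$: taking $q$ to be the top fixed point with positive weights $w_1,w_2$ and a class $\tau$ with $\tau(q)=w_1$, a direct computation with the substitution algorithm gives $\ii_q(\tau)=0$ but $\ii_q(w_1\tau)=1$. Consequently, in your downward induction the step ``for $k>i$, $\ii_{p_k}(\tau_{p_i})=\ii_{p_k}(\kappa)-\ii_{p_k}(\kappa)=0$'' is unjustified: to pass from the inductive normalization $\ii_{p_k}(\tau_{p_j})=\delta_{jk}$ to $\ii_{p_k}\bigl(\ii_{p_j}(\kappa)\,\tau_{p_j}\bigr)=\ii_{p_j}(\kappa)\cdot\delta_{jk}$ you would need linearity, and for $i<j<k$ the value $\tau_{p_j}(p_k)$ is in general a proper divisor of $\Lambda_{p_k}^-$, not a multiple of it, so the substitute property (ii) does not apply either. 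The repair is to correct one fixed point at a time (as the paper does in the non-index-increasing K-theoretic construction): set $a_{i,i}=\kappa_{p_i}$ and $a_{i,j}=a_{i,j-1}-\ii_{p_j}(a_{i,j-1})\,\eta_{p_j}$ for $j>i$, where $\eta_{p_j}$ is the Poincar\'e dual to $F_{p_j}$; then property (iii) removes, at the evaluation point $p_j$, exactly the terms that were already subtracted (which vanish at $p_j$), and property (ii) lets you compute $\ii_{p_j}$ of the newly added term because $c\,\eta_{p_j}$ restricted to $p_j$ is $c\,\Lambda_{p_j}^-$. With this modification your triangularity/basis argument goes through unchanged.

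It is also worth noting that the paper itself does not argue by Gram--Schmidt at all in the cohomology case: it proves Theorem~\ref{main cohomology} via Proposition~\ref{prop classes equal}, by directly computing the local indices of the Poincar\'e duals $\eta_p$ to the flow-ups and showing $\ii_q(\eta_p)=\delta_{pq}$ using the combinatorial recipe (with a degree argument forcing $\ii(\mathbf 1_{\C\pp^s})=0$ for $s>0$). So your strategy, once the linearity issue is repaired, is a genuinely different route: it buys a more formal, coordinate-free construction but does not identify the i-canonical classes with $\eta_p$, while the paper's direct computation both proves existence and gives the explicit identification $\tau_p=\eta_p$.
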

Indeed, in Proposition \ref{prop classes equal} we prove that the i-canonical classes in equivariant cohomology coincide with the
equivariant Poincar\'e duals to the flow-up manifolds $F_p$. Moreover, when the chosen component of the moment map is index increasing,
they also coincide with the canonical classes introduced by Goldin and Tolman in \cite{GT}
(see Proposition \ref{pd is gt}).

Note that the definition of i-canonical classes in equivariant cohomology is not a direct translation of the definition in K-theory.
Here we want the local index of $\tau_p$ to vanish at all fixed points other than $p$. The reason for this difference is that we want the class $\mathbf{1} \in H_\T^*(M;\Z)$ to be one of the elements of the basis of $H_\T^*(M;\Z)$.
\\ 

{\bf Organization.}
Section \ref{background} contains the background material and some preliminary results. In Section \ref{section construction} we construct i-canonical classes 
in equivariant K-theory, thus proving their existence.
Section \ref{ec} proves similar results in the
equivariant cohomology setting. We finish the paper with an appendix about an explicit description of the Kirwan map.

{\bf Acknowledgements.} The first author was supported by the Funda\c{c}\~ao para a Ci\^encia e a Tecnologia, Portugal: postdoctoral fellowship SFRH/BPD/87791/2012 and
projects EXCL/MAT-GEO/0222/2012, PTDC/MAT/117762/2010. 
The second author was supported by the Funda\c{c}\~ao para a Ci\^encia e a Tecnologia, Portugal, from 09/2013 until 08/2014: postdoctoral fellowship SFRH/BPD/86851/2012 and projects EXCL/MAT-GEO/0222/2012, PTDC/MATH/117762/2010.

\section{Background and preliminary results}\label{background}
\subsection{Hamiltonian spaces}\label{sec:1}
Let $(M,\omega)$ be a compact symplectic manifold of dimension $2n$, and $\T$ a compact real torus of dimension $d$ with Lie algebra
$\g$. Suppose that $\T$
acts on $(M,\omega)$ in a Hamiltonian fashion with moment map $\psi$, i.e.\ there exists a $\T$-invariant map $\psi\colon M\to \gd$ 
satisfying
\begin{equation}\label{moment map}
\iota_{\xi^\#}\omega=-d\langle \psi,\xi \rangle\;,
\end{equation}
where $\xi^\#$ denotes the vector field on $M$ associated with the flow of symplectomorphisms generated by $\xi\in \g$, and $\langle \cdot , \cdot \rangle$ the dual pairing between $\gd$ and $\g$. Unless otherwise stated, we assume the action to be effective, and the fixed point set $M^\T$ of the action to be discrete. We
refer to $(M,\omega,\T,\psi)$ as a {\bf Hamiltonian $\T$-space}.
Recall that when $\dim(\T)=\frac{\dim(M)}{2}$, the Hamiltonian
$\T$-space $(M,\omega,\T,\psi)$ is called a {\bf symplectic toric manifold}. 
Before specializing to the case of symplectic toric manifolds, we first introduce some notions that will be used throughout this note and do not
depend on the action being toric.

Let $\K$ denote the equivariant K-theory ring of $M$, i.e.\ the abelian group associated to the semigroup of 
isomorphism classes of complex $\T$-vector bundles over $M$, endowed with the direct sum operation 
$\oplus$ and the tensor product $\otimes$.
Thus, if $M$ is a point, $\mathcal{K}_\T(\p)$ is the representation ring of the torus $\T$, henceforth denoted by
$R(\T)$. Observe that, if $\ell^*\subset \gd$ denotes the weight lattice of $\gd$
and $\ell^*=\Z\langle x_1,\ldots,x_d\rangle$, then
$R(\T)$ can be identified with the ring of finite sums 
$\big\{\sum_{j\in J}n_je^{2\pi \il w_j}\;\;\mbox{s.t.}\;\;  |J|<\infty, n_j\in \Z \;\;\mbox{and}\;\; w_j\in \ell^*\big\} $, or equivalently 
\begin{equation}\label{id R(T)}
R(\T)=\Z[\e{x_1},\ldots,\e{x_d},\e{(-x_1-x_2-\ldots -x_d)}]
\end{equation}
 i.e. $R(\T)$ is identified with the character ring of $\T$. The unique map $\pi\colon M\to \{\p\}$ induces a map $\pi^*\colon R(\T)\to \K$ which gives
 $\K$ the structure of an $R(\T)$-module.
 
Observe that the inclusion $i\colon \Mt \hookrightarrow M$, which is clearly $\T$-equivariant, gives rise to a map in equivariant K-theory:
$$i^*\colon \K \to \KT\;.$$
Since we assumed $\Mt$ to be discrete we have $\KT=\bigoplus_{p\in \Mt}R(\T)$, which can be regarded as the ring of maps
that assign to each fixed point $p\in \Mt$ a representation in $R(\T)$.
In \cite[Corollary 2.2]{GK}, the authors prove that for (compact) Hamiltonian $\T$-spaces with discrete fixed point set $M^\T$, the above restriction map $i^*$ 
is {\bf injective} (this result is quoted here in Theorem \ref{facts K theory}; see also \cite[Theorem 2.5]{HL2} for the case in which $M$ is not necessarily compact). 
Thus $\K$ can be regarded as a subring of  $\mathcal{K}_\T(M^\T)$, which is a much easier object to deal with. 
 Henceforth we will always identify $\Ke(M)$ with $i^*(\Ke(M))$:
 $$\K \cong i^*(\Ke(M))\subset \Ke(M^\T)\cong \bigoplus_{p\in \Mt}R(\T).$$ 
Let  $p\in M^\T$ and $i_p^*\colon \K\to \mathcal{K}_\T(\{p\})$ the map induced by the inclusion $i_p\colon \{p\} \hookrightarrow M$. 
For every $\tau\in \K$ we denote by $\tau(p)\in R(\T)$ the value $i_p^*(\tau)$, and define the {\bf support} of $\tau$ to be
$$\textrm{supp}(\tau)=\{q \in M^\T\,|\,\tau(q) \neq 0\}\subset M^\T.$$
Observe that injectivity of $i^*$ implies that $\tau=0 \in \K$ if and only if $\tau(p)=0$ for all $p \in M^\T$, or equivalently if and only if $\textrm{supp}(\tau)=\emptyset$.
 
Let $J\colon TM\to TM$ be a $\T$-invariant almost complex structure compatible with $\omega$, i.e. $\omega(J\cdot,\cdot)$ defines an (invariant) inner product on $M$. If $p$ is a fixed point of the action, the $\T$-action on $M$ induces a representation on $T_pM\simeq \C^n$, called the {\bf isotropy representation of $\T$ at $p$ }, which is given by
\begin{equation}
\exp(\xi)\cdot (z_1,\ldots,z_n)= (e^{2\pi\il w_1(\xi)}z_1,\ldots,e^{2\pi\il w_n(\xi)}z_n),\quad\mbox{for every}\quad \xi\in \g\;.
\end{equation}
Here the $w_i$'s are well-defined nonzero elements of $\ell^*$ and are called the {\bf weights (of the isotropy representation of $\T$) at $p$}. The weights are nonzero because we are assuming $M^\T$ to be discrete, and the isotropy action commutes with the $\T$-action on $M$ around $p$. The set of these weights, counted with multiplicities, will be denoted by $W_p$, and the set of all isotropy weights, counted with multiplicities, by $W=\coprod_{p\in M^\T}W_p$.

Take a $\overline{\xi}\in \g$ generating a circle subgroup $S^1=\{\exp(t\,\overline{\xi})\,;\,t \in \R\}\subset \T$, such that $w(\overline{\xi})\neq 0$ for all $w\in W$. 
It is well-known (cf.\ \cite{F})
that the $\overline{\xi}$-component of the moment map $\mu= \psi^{\overline{\xi}}\colon M\to \R$, defined as $\mu(q)=\langle \psi(q),\overline{\xi}\rangle$, is a $\T$-invariant Morse function whose critical set coincides with the fixed point set $M^\T$. It is easy to check that equation \eqref{moment map} implies that the isotropy weights in the representation on the negative (resp.\ positive) normal bundle of $\mu$ at $p$, denoted by $N_p^-$ (resp.\ $N_p^+$), coincide with the {\bf positive (resp.\ negative) weights}, i.e.\ with those $w$'s in $W_p$ such that $w(\overline{\xi})>0$ (resp. $w(\overline{\xi})<0$). We denote this (multi)set by $W_p^+$ (resp. $W_p^-$).
Observe that, with this convention, all the weights at the minimum $p_0$ of $\mu$ are negative, so that $W_{p_0}=W_{p_0}^-$.

Let $\lambda_p$ denote the number of positive weights at $p$ for every $p\in M^\T$; then the Morse index of $\mu$ at $p$
is precisely $2\lambda_p$.
\begin{defin}\label{eec}
Given $p\in M^\T$, the {\bf equivariant (K-theoretical) Euler class} of the negative normal bundle $N_p^-$ of $\mu$ at $p$, denoted by
$\ee(p)$, is an element of $\mathcal{K}_\T(\{p\})$ defined as
$$
\ee(p)=\prod_{w_j\in W_p^+}(1-e^{2\pi \il w_j}) 
$$
\end{defin}
We will see that this class plays a key role in the construction of i-canonical classes.

We say that $\mu$ {\bf separates fixed points} if $\mu(p)\neq \mu(q)$ for every $p,q\in M^\T$ with $p\neq q$.
Observe that, since we only deal with symplectic toric manifolds, the moment map for the $\T$ action is always injective when restricted to the fixed point set. Thus, for an open dense subset of $\overline{\xi} \in \g$, the corresponding $\mu$ separates fixed points.
We call $\overline{\xi} \in \g$ {\bf generic} if $w(\overline{\xi})\neq 0$ for all $w\in W$ and the corresponding $\mu$ separates fixed points. Such $\mu$ is called a {\bf generic
component of the moment map}.
Henceforth, we order the fixed points of the action $p_0,\ldots,p_N$ in such a way that $\mu(p_0)<\mu(p_1)<\cdots < \mu(p_N)$ and denote this ordering by 
\begin{equation}\label{ordering fixed points}
 p_0\prec p_1\prec \cdots \prec p_N\,.
\end{equation}
The following proposition is not new, but since it plays a key role in our work, we include the proof for the readers' convenience.
\begin{prop}\label{existence kirwan}
Let $(M,\omega,\T,\psi)$ be a Hamiltonian $\T$-space, and let $\mu\colon M\to\R$ be a generic component of the moment map.
Then for every $p\in M^\T$ there exists a class $\nu_p\in \K$, called a {\bf Kirwan class} at $p$,  such that 
\begin{itemize}
\item[(i)] $\nu_p(p)=\ee(p)$;
\item[(ii)] $\nu_p(q)=0$ for every $q\in M^\T$ such that $q \prec p$, i.e. $\mu(q)< \mu(p)$.
\end{itemize}
Moreover the set $\{\nu_p\}_{p\in M^\T}$ is a basis for $\K$ as an $R(\T)$-module.
\end{prop}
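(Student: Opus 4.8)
The plan is to construct the Kirwan classes by downward induction on the ordering $p_0 \prec p_1 \prec \cdots \prec p_N$, starting from the top fixed point $p_N$ and proceeding to lower ones; this is the standard Morse-theoretic argument of Kirwan adapted to equivariant K-theory, and the key input is a surjectivity (or ``injectivity of the restriction to a sublevel set'') statement coming from the Atiyah--Bott--Kirwan lemma in the K-theoretic setting. Concretely, for each $k$ let $M_k = \mu^{-1}((-\infty, c_k])$ be the sublevel set just above $p_k$ (so $M_k^{\T} = \{p_0, \ldots, p_k\}$), and let $M_N = M$. First I would recall, or prove using the Morse stratification of $\mu$, that the restriction map $\Ke(M) \to \Ke(M_k)$ is surjective for every $k$: passing the critical value $c_k$ only attaches the equivariant negative disk bundle of $N_{p_k}^-$, and since the relevant equivariant Euler class $\ee(p_k) = \prod_{w_j \in W_{p_k}^+}(1 - e^{2\pi\il w_j})$ is not a zero-divisor in $R(\T)$, the Mayer--Vietoris / Thom-Gysin sequence in equivariant K-theory splits, yielding both the surjectivity and the fact that $\Ke(M)$ is a free $R(\T)$-module with one generator ``supported above each fixed point''.

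Given this, here is how I would carry out the construction of a single class $\nu_p$ with $p = p_k$. Consider the element of $\Ke(M_k^{\T}) = \bigoplus_{j \le k} R(\T)$ that is $\ee(p_k)$ in the $p_k$-slot and $0$ in all slots $p_j$ with $j < k$. I claim this element lies in the image of $i^*\colon \Ke(M_k) \hookrightarrow \Ke(M_k^{\T})$. Indeed, $M_k$ is obtained from $M_{k-1}$ by attaching the disk bundle $D(N_{p_k}^-) \to \{p_k\}$ along its sphere bundle; the Thom isomorphism gives a class $\theta \in \Ke(M_k, M_{k-1})$ whose restriction to the fixed point $p_k$ is exactly $\ee(p_k)$ (the K-theoretic equivariant Euler class of $N_{p_k}^-$), and whose image in $\Ke(M_k)$ restricts to $0$ on all of $M_{k-1}^{\T} = \{p_0, \ldots, p_{k-1}\}$ because $\theta$ comes from a relative class. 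So the image of $\theta$ in $\Ke(M_k)$ is a class with the prescribed fixed-point data on $M_k$. Finally, using the surjectivity of $\Ke(M) \to \Ke(M_k)$ established above, lift this class to some $\nu_{p_k} \in \Ke(M)$; by construction $\nu_{p_k}(p_k) = \ee(p_k)$ and $\nu_{p_k}(q) = 0$ for all $q \prec p_k$, which are properties (i) and (ii).

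For the ``basis'' claim I would argue as follows. The classes $\{\nu_{p}\}_{p \in M^{\T}}$ are linearly independent over $R(\T)$: if $\sum_p a_p \nu_p = 0$ with not all $a_p$ zero, pick the $\prec$-minimal $p$ with $a_p \neq 0$ and evaluate at $p$; every $\nu_{p'}$ with $p' \neq p$ either has $p' \succ p$ (so $a_{p'}$ does not enter by minimality) or has $\nu_{p'}(p) = 0$ by property (ii), leaving $a_p \ee(p) = 0$, and since $\ee(p)$ is not a zero-divisor in $R(\T)$ this forces $a_p = 0$, a contradiction. For spanning, note that the ``triangularity'' with respect to the ordering $\prec$, together with the fact that each diagonal entry $\ee(p)$ generates the image of $\Ke(M, M_{<p}) \to \Ke(M)$ over $R(\T)$ (from the Thom isomorphism), lets one clear any $\tau \in \Ke(M)$ from the bottom up: subtract the appropriate $R(\T)$-multiple of $\nu_{p_0}$ to kill $\tau(p_0)$, then of $\nu_{p_1}$, and so on, terminating after finitely many steps with $0$; hence $\tau$ is an $R(\T)$-combination of the $\nu_p$. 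I would remark that the divisibility needed at each step (that $\tau(p_k)$ minus the accumulated corrections is divisible by $\ee(p_k)$ in $R(\T)$) is exactly the content of the statement that $\Ke(M) \to \Ke(M_k)$ is surjective with kernel generated by the Thom class, so no extra work is needed.

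The main obstacle I anticipate is establishing the surjectivity of $\Ke(M) \to \Ke(M_k)$ (equivalently, the freeness of $\Ke(M)$ over $R(\T)$ via the Morse stratification) cleanly in equivariant K-theory: the cohomological Atiyah--Bott--Kirwan argument uses that the equivariant Euler class of the negative bundle is a non-zero-divisor in $H_\T^*(\p)$, and the K-theoretic analogue requires knowing that $\ee(p) = \prod_{w \in W_p^+}(1 - e^{2\pi\il w})$ is a non-zero-divisor in $R(\T) = \Z[\e{x_1}, \ldots, \e{x_d}, \e{-x_1 - \cdots - x_d}]$ — which holds because $R(\T)$ is an integral domain (a Laurent polynomial ring) and each factor $1 - e^{2\pi\il w}$ is nonzero for $w \neq 0$ — and then feeding this into the long exact sequence of the pair $(M_k, M_{k-1})$ together with the Thom isomorphism for the equivariant normal bundle. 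Once that homological input is in place, the rest is the routine triangular bookkeeping sketched above.
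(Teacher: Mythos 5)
Your proof is correct and follows essentially the same route as the paper's: both hinge on the Thom isomorphism for the negative normal bundle, the observation that $\ee(p)$ is a non-zero-divisor in $R(\T)$ (so the long exact sequence of consecutive sublevel pairs splits), surjectivity of restriction from $\Ke(M)$ to sublevel sets, and the triangularity of the fixed-point restrictions to establish the basis claim. One small slip in your linear-independence paragraph: the parenthetical justifications for the two cases are swapped --- minimality of $p$ forces $a_{p'}=0$ when $p' \prec p$, while property (ii) forces $\nu_{p'}(p)=0$ when $p' \succ p$ --- but the conclusion $a_p\,\ee(p)=0$ is unaffected.
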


Before giving the proof of this Proposition, we recall here a few important facts about the equivariant K-theory ring of Hamiltonian $\T$-spaces.
For every $p\in M^\T$ and $\varepsilon>0$ sufficiently small, let 
$$
M_p^\pm = \{q\in M\mid \mu(q)\leq \mu(p)\pm \varepsilon\}.
$$
By a standard Morse-theoretic argument there exists an equivariant homotopy equivalence between $(M_p^+,M_p^-)$ and $(D^{2\lambda_p},\partial D^{2\lambda_p})$,
where $D^{2\lambda_p}$ is the disk of dimension $2\lambda_p$ centered at $p$; hence $\Ke(M_p^+,M_p^-)\simeq \Ke(D^{2\lambda_p},\partial D^{2\lambda_p})$. 

Consider the following diagram:
\begin{equation}\label{diag thom}
 \xymatrix{ 
\Ke(M_p^+,M_p^-) \ar[r]^-{\alpha_p} &  \Ke(M_p^+)\ar[d]_{\iota_p^*}\\
\Ke(\{p\})\ar[u]_{\mathcal{T}_p}  \ar[r] &  \Ke(\{p\}) \\
}
\end{equation}
where $\mathcal{T}_p$ is the Thom isomorphism, $\alpha_p$ the map in the long exact sequence of the pair $(M_p^+,M_p^-)$ and $\iota_p^*$ the restriction map. We have that $\iota_p^*\circ \alpha_p \circ \mathcal{T}_p$ is just the multiplication by $\ee(p)$, which is not a zero divisor in $\Ke(\{p\})$, thus implying that $\alpha_p$ is injective 
for every $p\in M^\T$. This is the main ingredient of the following Theorem (whose proof is omitted here, but the reader can refer to \cite[Lemma 2.1 and Corollary 2.2]{GK}):
\begin{thm}\label{facts K theory}
For every $p\in M^\T$, the K-theory long exact sequence of the pair $(M_p^+,M_p^-)$ splits into short exact sequences
\begin{equation}\label{ses}
\xymatrix{
0\ar[r] &\Ke(M_p^+,M_p^-) \ar[r]^{\alpha_p} & \Ke(M_p^+)\ar[r]^{\beta_p}& \Ke(M_p^-)\ar[r]& 0
}
\end{equation}
Moreover, the following map 
\begin{equation}\label{inj}
 \Ke(M_p^\pm)\to \Ke(M_p^\pm \cap M^\T)
\end{equation}
is injective for every $p\in M^\T$, hence so is
\begin{equation}\label{inj 2}
i^*\colon \Ke(M)\to \Ke(M^\T),
\end{equation}
and 
\begin{equation}\label{surj}
\Ke(M)\to \Ke(M_p^\pm)
\end{equation}
is surjective for every $p\in M^\T$.
\end{thm}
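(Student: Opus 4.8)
The plan is to follow the strategy indicated in the paragraph preceding the statement, reducing everything to three inputs that are either recorded above or standard: the $\T$-equivariant homotopy equivalence of pairs $(M_p^+,M_p^-)\simeq(D^{2\lambda_p},\partial D^{2\lambda_p})$; the Thom isomorphism in equivariant K-theory for the complex $\T$-representation $N_p^-\to\{p\}$; and the fact that $\ee(p)=\prod_{w\in W_p^+}(1-\e{w})$ is a non-zero-divisor in $R(\T)$, being a nonzero element of the integral domain $R(\T)$ (a Laurent polynomial ring over $\Z$, cf.\ \eqref{id R(T)}).

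First I would establish the short exact sequence \eqref{ses}. The long exact sequence of the pair $(M_p^+,M_p^-)$ in $\Z/2$-graded equivariant K-theory is cyclic of period two and contains, for each $j\in\Z/2$, the exact segment
$$\Ke^{j}(M_p^+,M_p^-)\xrightarrow{\alpha_p}\Ke^{j}(M_p^+)\xrightarrow{\beta_p}\Ke^{j}(M_p^-)\xrightarrow{\partial}\Ke^{j+1}(M_p^+,M_p^-).$$
Combining the homotopy equivalence above with the Thom isomorphism for the complex vector bundle $N_p^-$ over $\{p\}$ identifies $\Ke^*(M_p^+,M_p^-)$ with $\Ke^*(\{p\})$, that is, with $R(\T)$ in even degree and $0$ in odd degree. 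The vanishing of $\Ke^1(M_p^+,M_p^-)$ forces $\partial\colon\Ke^0(M_p^-)\to\Ke^1(M_p^+,M_p^-)$ to be zero, so $\beta_p$ is surjective in degree $0$. For injectivity of $\alpha_p$ in degree $0$ I would use diagram \eqref{diag thom}: $\mathcal{T}_p$ is an isomorphism and $\iota_p^*\circ\alpha_p\circ\mathcal{T}_p$ is multiplication by the non-zero-divisor $\ee(p)$, hence injective, so $\alpha_p$ is injective. Exactness then yields $\operatorname{im}\bigl(\partial\colon\Ke^1(M_p^-)\to\Ke^0(M_p^+,M_p^-)\bigr)=\ker\alpha_p=0$ as well, and putting these together gives \eqref{ses}.

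Next I would prove \eqref{inj} and \eqref{inj 2} by induction on $i$, showing that $\Ke(M_{p_i}^+)\to\Ke(M_{p_i}^+\cap M^\T)$ is injective; the corresponding statement for $M_p^-$ then follows, since (for $\varepsilon$ uniformly small) $M_{p_i}^-$ is $\T$-equivariantly homotopy equivalent to $M_{p_{i-1}}^+$, as $\mu$ has no critical value strictly between $\mu(p_{i-1})$ and $\mu(p_i)$, while $M_{p_0}^-=\emptyset$. The base case is trivial since $M_{p_0}^+$ is $\T$-equivariantly contractible to $p_0$. For the inductive step, assume the claim at $i-1$, so that $\Ke(M_{p_i}^-)\to\Ke(M_{p_i}^-\cap M^\T)$ is injective, and let $\tau\in\Ke(M_{p_i}^+)$ vanish on $M_{p_i}^+\cap M^\T=\{p_0,\dots,p_i\}$. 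Restricting to $M_{p_i}^-\cap M^\T=\{p_0,\dots,p_{i-1}\}$ and applying the inductive hypothesis gives $\beta_{p_i}(\tau)=0$, so by \eqref{ses} we may write $\tau=\alpha_{p_i}(\mathcal{T}_{p_i}(r))$ for a unique $r\in R(\T)$; evaluating at $p_i$ via \eqref{diag thom} yields $0=\tau(p_i)=r\cdot\ee(p_i)$, whence $r=0$ and $\tau=0$. The case $i=N$, where $M_{p_N}^+=M$, is \eqref{inj 2}. Finally, surjectivity \eqref{surj} is immediate from \eqref{ses}: each $\beta_{p_{j+1}}\colon\Ke(M_{p_{j+1}}^+)\to\Ke(M_{p_{j+1}}^-)\cong\Ke(M_{p_j}^+)$ is onto, so composing these surjections for $j=i,\dots,N-1$ exhibits $\Ke(M)=\Ke(M_{p_N}^+)\to\Ke(M_{p_i}^+)$ as surjective, and the $M_p^-$ case reduces to the $M_p^+$ case (being trivial for $p_0$).

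The step I expect to require the most care is the identification $\Ke^*(M_p^+,M_p^-)\cong\Ke^*(\{p\})$ concentrated in even degree, together with the assertion underlying \eqref{diag thom} that the image of the Thom class under $\iota_p^*\circ\alpha_p$ is exactly $\ee(p)$: this is the full content of the Thom isomorphism theorem in $\T$-equivariant K-theory (the canonical K-orientation of complex $\T$-vector bundles) and of equivariant Bott periodicity for representation spheres. Once this input is in place, the remainder is a diagram chase resting on the single algebraic fact that $\ee(p)$ is a non-zero-divisor in $R(\T)$.
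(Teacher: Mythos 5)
Your proof is correct and follows exactly the route the paper indicates: the paper omits the detailed argument (deferring to Guillemin--Kogan, Lemma 2.1 and Corollary 2.2), but its stated main ingredient is precisely your key step, namely that via the homotopy equivalence $(M_p^+,M_p^-)\simeq(D^{2\lambda_p},\partial D^{2\lambda_p})$ and the Thom isomorphism the composite $\iota_p^*\circ\alpha_p\circ\mathcal{T}_p$ is multiplication by the non-zero-divisor $\ee(p)$ in $R(\T)$, forcing $\alpha_p$ to be injective and the long exact sequence to split. Your completion of the argument --- vanishing of the odd relative group for surjectivity of $\beta_p$, and the induction over the ordered fixed points for \eqref{inj}, \eqref{inj 2} and \eqref{surj} --- is the standard one the cited reference carries out, so there is nothing essentially different to compare.
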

\begin{proof}[Proof of Proposition \ref{existence kirwan}]

Consider any K-theory class $\nu$ in $\Ke(M_p^+)$. Recall that we denote $\iota_p^*(\nu)$ by $\nu(p)$.
By the exactness of \eqref{ses} and the analysis of the diagram \eqref{diag thom} done before, we obtain that $\nu$ is in
$\ker(\beta_p)$ if and only if it satisfies $\nu(p)=f\,\ee(p)$ 
 for some $f\in R(\T)=\Ke(\{p\})$ and $\nu(q)=0$ for all $q\in M_p^-\cap M^\T$. 
By Theorem \ref{facts K theory} the restriction map $\Ke(M_p^+)\to \Ke(M_p^+\cap M^\T)$ is injective, so once $f$ is fixed
 these conditions uniquely characterize the class $\nu$. By taking $f=1$ and extending the class $\nu$ to $M$, which can be achieved by using the surjectivity of \eqref{surj}, we obtain a class $\nu_p\in \Ke(M)$ satisfying properties (i) and (ii) in Proposition \ref{existence kirwan}, henceforth called a Kirwan class.  
 
 Consider a collection of Kirwan classes $\{\nu_p\}_{p\in M^\T}$. We first need to prove that they generate
 $\Ke(M)$ as an $R(\T)$-module.  
 Let $\gamma\in \Ke(M)$, and let $q_0$ be the first fixed point (in $\prec$ order) where $\gamma(q_0)\neq 0$. Since the restriction of $\gamma$ to
 $M_{q_0}^-$ is zero, from what we observed before,  and by property (i) of $\nu_{q_0}$, we have $\gamma(q_0)=f_0 \,\ee(q_0)=f_0 \,\nu_{q_0}(q_0)$, for some $f_0\in R(\T)$.
 Thus the class $\gamma - f_0 \nu_{q_0}$ is zero at $q_0$, and by property (ii) of $\nu_{q_0}$, the first fixed point $q_1$ where it doesn't vanish satisfies $q_0\prec q_1$.
 By repeating this argument we can construct a class $\gamma-\sum_{i=1}^m f_i \nu_{q_i}$, with $f_i\in R(\T)$ for all $i=1,\ldots,m$, whose restriction to the fixed
 point set vanishes identically. By the injectivity of \eqref{inj 2} it follows that $\gamma-\sum_{i=1}^m f_i \nu_{q_i}=0$, and hence $\{\nu_p\}_{p\in M^\T}$ is a set of generators of $\Ke(M)$
 as an $R(\T)$-module. 
 
 Now suppose that $\delta = \sum_{j=0}^s c_j \nu_{p_{i_j}}=0$, where
 $c_j\in R(\T)$ and $c_j\neq 0 $ for every $j=0,\ldots,s$, and assume that $i_1<i_2<\cdots < i_s$. 
 Observe that $\nu_{p_{i_1}}$ is the only class that does not vanish at $p_{i_1}$, and hence $\delta(p_{i_1})= c_{1}\nu_{p_{i_1}}(p_{i_1})=c_1 \ee(p_{i_1})\neq 0$, which gives a contradiction. 
 We conclude that the set $\{\nu_p\}_{p\in M^\T}$ is a basis for $\Ke(M)$
 as an $R(\T)$-module.
 \end{proof}
 Finally we recall the following Lemma, whose proof follows, \emph{mutatis mutandis}, from that of \cite[Lemma 2.4]{GoSa}.
\begin{lemma}\label{structure constants}
With the same hypotheses of Proposition \ref{existence kirwan}, let $\{\nu_p\}_{p\in M^\T}$ be
 a collection of Kirwan classes. Then there exists an explicit algorithm that computes the equivariant structure constants 
 associated to the basis $\{\nu_p\}_{p\in M^\T}$ from their 
restrictions to the fixed point set $\{i^*(\nu_p)\}_{p\in M^\T}$.
\end{lemma}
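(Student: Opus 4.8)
The plan is to exploit the fact that $i^*\colon\Ke(M)\to\Ke(\Mt)$ is an \emph{injective ring homomorphism}, so that both the product $\nu_p\cdot\nu_q$ and the identity ``$\nu_p\nu_q=\sum_k a^{p_k}_{pq}\nu_{p_k}$'' can be read off fixed point by fixed point; this reduces the whole problem to linear algebra over $R(\T)$ that is triangular with respect to the order $p_0\prec\cdots\prec p_N$. First I would, for a fixed pair $p,q\in\Mt$, compute the restrictions of $\eta:=\nu_p\cdot\nu_q$ directly from the input data $\{i^*(\nu_p)\}$: namely $\eta(r)=\nu_p(r)\,\nu_q(r)\in R(\T)$ for every $r\in\Mt$. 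Since $\{\nu_p\}_{p\in\Mt}$ is a basis of $\Ke(M)$ as an $R(\T)$-module by Proposition \ref{existence kirwan}, there are unique $a^{p_k}_{pq}\in R(\T)$ with $\eta=\sum_{k=0}^N a^{p_k}_{pq}\,\nu_{p_k}$, and these are exactly the structure constants to be produced.

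Next I would recover the $a^{p_k}_{pq}$ by a finite sweep along $p_0\prec\cdots\prec p_N$. Assume $a^{p_0}_{pq},\dots,a^{p_{k-1}}_{pq}$ are already known and set $\eta^{(k)}:=\eta-\sum_{i<k}a^{p_i}_{pq}\,\nu_{p_i}\in\Ke(M)$, so that $\eta^{(k)}=\sum_{j\geq k}a^{p_j}_{pq}\,\nu_{p_j}$. Evaluating this expression at $p_k$ and using property (ii) of Proposition \ref{existence kirwan} (which kills $\nu_{p_j}(p_k)$ for $j>k$) together with property (i), one gets $\eta^{(k)}(p_k)=a^{p_k}_{pq}\,\ee(p_k)$. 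On the other hand $\eta^{(k)}$ vanishes at $p_0,\dots,p_{k-1}$, which are precisely the fixed points lying in $M_{p_k}^-$; hence its restriction to $M_{p_k}^-$ is zero by injectivity of \eqref{inj}, so $\eta^{(k)}|_{M_{p_k}^+}\in\ker\beta_{p_k}$, and the analysis of diagram \eqref{diag thom} with the exactness of \eqref{ses} (exactly as in the proof of Proposition \ref{existence kirwan}) shows that $\ee(p_k)$ divides $\eta^{(k)}(p_k)$ in $R(\T)$, the quotient being unique since $\ee(p_k)$ is not a zero divisor. So I define $a^{p_k}_{pq}:=\eta^{(k)}(p_k)/\ee(p_k)$, form $\eta^{(k+1)}$, and iterate. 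This division is effective because $R(\T)=\Z[\e{x_1},\dots,\e{x_d},\e{(-x_1-\cdots-x_d)}]$ is a Laurent polynomial ring, in which testing and carrying out an exact division is algorithmic.

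The procedure terminates after $N+1$ steps, and $\eta^{(N+1)}=\eta-\sum_k a^{p_k}_{pq}\,\nu_{p_k}$ then vanishes at every fixed point, hence equals $0$ by injectivity of \eqref{inj 2}; so the computed $a^{p_k}_{pq}$ are genuinely the structure constants, produced from $\{i^*(\nu_p)\}_{p\in\Mt}$ alone. The one point that is not mechanical bookkeeping — and the step I expect to carry the real content — is the exact divisibility of $\eta^{(k)}(p_k)$ by $\ee(p_k)$ at each stage: this is not a statement about $R(\T)$ but a consequence of $\eta^{(k)}$ being an honest class in $\Ke(M)$ whose restriction to $M_{p_k}^-$ vanishes, which is exactly what the Morse-theoretic splitting recorded in Theorem \ref{facts K theory} guarantees. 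Once that is in hand, the rest is routine, and the whole argument is the verbatim transcription — with equivariant cohomology replaced by equivariant K-theory and the ordinary equivariant Euler class replaced by $\ee(p)$ — of the proof of \cite[Lemma 2.4]{GoSa}.
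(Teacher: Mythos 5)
Your argument is correct and is exactly the approach the paper intends: the paper gives no written proof, only the remark that it ``follows, \emph{mutatis mutandis}, from that of \cite[Lemma 2.4]{GoSa}," and your iterative scheme (compute $\eta(r)=\nu_p(r)\nu_q(r)$ pointwise, then peel off coefficients one fixed point at a time in the $\prec$ order by dividing by $\ee(p_k)$, with the divisibility and injectivity guaranteed by Theorem \ref{facts K theory}) is precisely that transcription to the K-theoretic setting. The one remark I'd add is that once you have written $\eta^{(k)}=\sum_{j\geq k}a^{p_j}_{pq}\nu_{p_j}$, the divisibility of $\eta^{(k)}(p_k)$ by $\ee(p_k)$ is already immediate from $\eta^{(k)}(p_k)=a^{p_k}_{pq}\ee(p_k)$, so the appeal to the splitting \eqref{ses} is a second, independent verification rather than a logically necessary step; but it is a harmless and illuminating cross-check.
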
  
Observe that Kirwan classes are never unique, unless $M$ is a point. Indeed, if
 $\nu_p$ is a Kirwan class at $p$, 
then the class $$\nu_p+\sum_{\{q\in M^\T \mid \mu(q)>\mu(p)\}}a_q\nu_q$$
also is, for any set of $a_q\in R(\T)$.

In the next sections we introduce ``special" Kirwan classes, i.e.\ Kirwan classes satisfying some extra assumptions that will ensure their uniqueness, and compute their restrictions to the fixed point set. 

 \subsection{The equivariant K-theory ring of GKM spaces}\label{GKM section}
 Let $(M,\omega,\T,\psi)$ be a Hamiltonian $\T$-space (so $M^\T$ is discrete) and assume that $\dim(\T)\geq 2$.
 Then the $\T$-action is called 
 {\bf GKM (Goresky-Kottwitz-MacPherson \cite{GKM})}, or equivalently $(M,\omega,\T,\psi)$ is called a {\bf GKM space}, if for every codimension 
 one subtorus $K\subset \T$, the submanifold fixed by $K$ has dimension at most $2$. It can be checked that this condition is equivalent to requiring that for every fixed point $p\in M^\T$, the weights of the isotropy action at $p$, $w_{1},\ldots,w_n\in \ell^* \subset \mathfrak{t}^*$, are pairwise linearly independent. 
 Let $\mathfrak{k}_i\subset \mathfrak{t}$ be $\ker(w_i)$, and $K_i=\exp(\mathfrak{k}_i)$. From the definition of GKM space it follows that for each $i=1,\ldots,n$, the connected component of $M^{K_i}$ containing $p$ is a $2$-sphere, called {\bf isotropy sphere}. The circle group $\T/K_i$ acts effectively on such sphere, and this action has two fixed points, one of them being $p$. The combinatorics of the arrangement of isotropy spheres, together with the information on their stabilizers, is encoded in a labeled graph, called {\bf GKM graph $\Gamma=(V,E)$}:
 \begin{itemize}
 \item The vertex set $V$ coincides with the fixed point set $M^\T$.
 \item Given distinct $p,q\in V$, there exists a directed edge $e=\overrightarrow{p\,q}$ from $p$ to $q$ if and only if there exists a $2$-sphere $S^2$ fixed by some codimension one subgroup $K\subset \T$ such that the fixed points of the action of the quotient circle $\T/K$ on $S^2$ are precisely $p$ and $q$; we refer to this sphere as the {\bf sphere associated to (the edge) $e$}.
 \item Every edge $e=\overrightarrow{p\,q}\in E$ is labeled by a weight $w(e)\in \ell^*$, defined as the weight of the isotropy $\T$-action on $T_qS^2$, where $S^2$ is the sphere associated to $e=\overrightarrow{p\,q}$.
 \end{itemize}
 Every time $p$ is connected to $q$ by an edge $e=\overrightarrow{p\,q}$ (with weight $w(e)$), then by definition also $q$ is connected to $p$ by an edge $-e=\overrightarrow{q\,p}$ (with weight $w(-e)=-w(e)$). In order to avoid having two edges representing geometrically the same sphere, we will choose one of those edges by picking an \emph{orientation} on the edge set $E$ in the following way. 
Pick a generic $\overline{\xi}$ and let $\mu\colon M\to \R$ be the $\overline{\xi}$-component of the moment map $\psi$, as defined before. Each isotropy sphere is a symplectic submanifold with an effective Hamiltonian action of a circle with two fixed points $p$ and $q$. Since $\overline{\xi}$ is generic we have
 $w(\overline{\xi})\neq 0$, for all $w\in W_p$. This implies that $\mu(p)\neq \mu(q)$, and so for each isotropy sphere we choose the directed edge $e=\overrightarrow{p\,q}$ such that $\mu(p)<\mu(q)$. 
 We will refer to this graph as the {\bf oriented GKM graph} (associated to $(M,\omega,\T,\psi,\overline{\xi})$) and denote it by $\Gamma^o=(V,E^o)$.
 We also define an {\bf increasing path} $\gamma$ from $p$ to $q$ in the oriented GKM graph $\Gamma^o$, where $p,q\in V$,
 to be an ordered sequence of edges in $E^o$ of the form  $(\overrightarrow{p\,p_1},\,\overrightarrow{p_1\,p_2},\ldots,\,\overrightarrow{p_j\,q})$.
 Observe that if $p\succeq q$ then the set of increasing paths from $p$ to $q$ is empty.

 For a GKM space $(M,\omega,\T,\psi,\overline{\xi})$ with oriented GKM graph $\Gamma^o=(V,E^o)$, the weights in the
 negative normal bundle at $p$, namely those in $W_p^+$, coincide with the weights associated to the edges $e_j\in E^o$ of the form $\overrightarrow{q_j\,p}$. Thus 
 the equivariant (K-theoretical) Euler class of the negative bundle at $p\in M^\T$ in Definition \ref{eec} can be expressed as
 \begin{equation*}
 \ee(p)=\prod_{e_j}(1-e^{2\pi \il w(e_j)})
 \end{equation*}
 where the product is over all the $\lambda_p$ edges $e_j\in E^o$ ending at $p$, i.e. $e_j=\overrightarrow{q_j\,p}\in E^o$, for some $q_j\in V$.
 
In analogy with the equivariant cohomology ring, the GKM graph determines which elements $f\in \Ke(M^\T)$
come from classes in $\Ke(M)$. 
This is proved by Knutson and Rosu in the Appendix of \cite{R}. 
 \begin{thm}[Knutson, Rosu '03]\label{GKM k}
 Let $(M,\omega,\T,\psi)$ be a GKM space, and $\Gamma=(V,E)$ the associated GKM graph. Then $\tau \in \Ke(M^\T)\simeq \bigoplus_{p\in M^\T}R(\T)$ is an element of $\Ke(M)$ if and only if for every $e=\overrightarrow{p\,q}\in E$ 
 \begin{equation}\label{k cdt}
 \tau(p)-\tau (q)=f\,(1-e^{2\pi \il w(e)})\quad \mbox{for some}\quad f\in R(\T)\,.
 \end{equation}
 \end{thm}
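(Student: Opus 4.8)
The statement asserts that the image of $i^*\colon\K\to\KT$ is exactly the submodule cut out by the ``edge congruences'' \eqref{k cdt}. The plan is to prove the two inclusions separately.

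\textbf{Necessity.} The idea is to restrict to the isotropy spheres. Fix $\tau\in\K$ and an edge $e=\overrightarrow{p\,q}\in E$ with associated $\T$-invariant $2$-sphere $Z_e\cong\pp^1$. Restriction along the $\T$-equivariant inclusion $Z_e\hookrightarrow M$ sends $\tau$ to $\tau|_{Z_e}\in\Ke(Z_e)$ and then to $(\tau(p),\tau(q))$ under $\Ke(Z_e)\to\Ke(Z_e^\T)\cong R(\T)\oplus R(\T)$, so it suffices to describe the image of this last map. For that I would apply Proposition~\ref{existence kirwan} to the Hamiltonian $\T$-space $Z_e$ with the Morse function $\mu|_{Z_e}$, which has exactly two critical points: the minimum $p$ (Morse index $0$) and the maximum $q$ (Morse index $2$). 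The trivial bundle $\mathbf{1}$ is a Kirwan class at $p$, since $\ee(p)=1$, with $\mathbf{1}(p)=\mathbf{1}(q)=1$; and a Kirwan class $\nu_q$ at $q$ satisfies $\nu_q(p)=0$ and $\nu_q(q)=\ee(q)=1-\e{w(e)}$, because the negative normal bundle of $\mu|_{Z_e}$ at $q$ is all of $T_qZ_e$, carrying the single weight $w(e)$. By the ``moreover'' in Proposition~\ref{existence kirwan}, $\{\mathbf{1},\nu_q\}$ is an $R(\T)$-basis of $\Ke(Z_e)$, hence its image in $R(\T)\oplus R(\T)$ is precisely $\{(a,b)\mid a-b\in(1-\e{w(e)})\,R(\T)\}$. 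Applying this to $(\tau(p),\tau(q))$ gives \eqref{k cdt}.

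\textbf{Sufficiency.} Let $\mathcal{G}\subseteq\KT$ be the $R(\T)$-submodule of all $\tau$ satisfying \eqref{k cdt} for every edge; by the necessity direction $i^*(\K)\subseteq\mathcal{G}$, and I want equality. Take $\tau\in\mathcal{G}$, $\tau\neq 0$, and let $q_0$ be the $\prec$-smallest fixed point with $\tau(q_0)\neq 0$. Every edge of $\Gamma^o$ ending at $q_0$ has the form $\overrightarrow{q'\,q_0}$ with $q'\prec q_0$, so $\tau(q')=0$, and \eqref{k cdt} forces $\bigl(1-\e{w(e)}\bigr)\mid\tau(q_0)$ in $R(\T)$; moreover the weights of these incoming edges are exactly the elements of $W^+_{q_0}$. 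Now $R(\T)\cong\Z[\ell^*]\cong\Z[x_1^{\pm1},\dots,x_d^{\pm1}]$ is a unique factorization domain, and the GKM hypothesis says that the weights in $W^+_{q_0}$ are pairwise linearly independent; this makes the elements $1-\e{w}$, $w\in W^+_{q_0}$, pairwise coprime, since a common non-unit factor would have a codimension-one vanishing locus inside $(\C^{*})^{d}=\operatorname{Spec}\bigl(R(\T)\otimes\C\bigr)$, which cannot be contained in the codimension-two locus $\{\e{w}=\e{w'}=1\}$ attached to two linearly independent weights $w,w'$. In a UFD, divisibility by each of a family of pairwise-coprime elements implies divisibility by their product, so $\ee(q_0)=\prod_{w\in W^+_{q_0}}(1-\e{w})$ divides $\tau(q_0)$; write $\tau(q_0)=f_0\,\ee(q_0)$. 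Since a Kirwan class $\nu_{q_0}\in\K$ exists with $\nu_{q_0}(q_0)=\ee(q_0)$ and $\nu_{q_0}(q)=0$ for all $q\prec q_0$ (Proposition~\ref{existence kirwan}), the class $\tau-f_0\,i^*(\nu_{q_0})$ lies in $\mathcal{G}$, vanishes at $q_0$ and at every $q\prec q_0$, so its $\prec$-smallest nonzero point is strictly above $q_0$. Iterating finitely often produces $\tau-\sum_k f_k\,i^*(\nu_{q_k})$ that vanishes identically on $M^\T$, hence is $0$ in $\KT$; therefore $\tau=i^*\bigl(\sum_k f_k\,\nu_{q_k}\bigr)\in i^*(\K)$.

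\textbf{Main obstacle.} The one genuinely nontrivial step is the divisibility argument in the sufficiency direction, where one must upgrade ``$1-\e{w}$ divides $\tau(q_0)$ for each incoming edge $w$'' to ``$\ee(q_0)$ divides $\tau(q_0)$''; this is exactly where the GKM hypothesis is indispensable, through the pairwise coprimality (equivalently, the linear independence) of the isotropy weights. It is important to run this through unique factorization — divisibility by a product of pairwise-coprime elements — rather than through the Chinese Remainder Theorem, because the ideals $(1-\e{w})$, although pairwise coprime, are in general not comaximal in $R(\T)$. The remaining ingredients — restriction to invariant $2$-spheres, the description of $\Ke(\pp^1)$ via Kirwan classes, and the telescoping reduction — are routine once Proposition~\ref{existence kirwan} is in hand.
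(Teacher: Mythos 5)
The paper does not contain a proof of this theorem: it attributes the result to Knutson and Rosu and refers the reader to the appendix of \cite{R}. So there is no in-paper argument to compare yours against; I will instead assess the proposal directly.

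Your proof is correct, and the strategy --- necessity by restricting to the isotropy spheres $Z_e$ and using the two-element Kirwan basis $\{\mathbf{1},\nu_q\}$ of $\Ke(Z_e)$, sufficiency by the telescoping reduction through Kirwan classes --- is a legitimate Morse-theoretic route to the GKM description in $K$-theory. It has the virtue of relying only on Proposition~\ref{existence kirwan}, which the paper has already established, rather than on the localization machinery of \cite{R}. You have also correctly isolated the crux of the sufficiency direction (coprimality of the $1-\e{w}$ for pairwise linearly independent $w$, proved via unique factorization rather than the Chinese Remainder Theorem, since the relevant ideals are not comaximal).

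Two points to tighten. First, in the necessity direction, $\T$ does not act effectively on $Z_e$, so Proposition~\ref{existence kirwan} does not apply verbatim; but its proof never uses effectiveness, or one can pass to the quotient circle $\T/K$ acting effectively on $Z_e$. Second, in the coprimality step you pass from $R(\T)=\Z[\ell^*]$ to $(\C^*)^d=\operatorname{Spec}\bigl(R(\T)\otimes\C\bigr)$; a non-unit of $\Z[\ell^*]$ could a priori become a unit after base change to $\C$ (e.g.\ an integer constant), so one should add that any divisor of $1-\e{w}$ has content $\pm1$ by Gauss's lemma, and a content-$\pm1$ element that is a unit of $\C[\ell^*]$ is already $\pm$ a monomial, hence a unit of $\Z[\ell^*]$. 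Alternatively one can avoid the geometry entirely: pick a $\Z$-basis $u_1,\dots,u_d$ of $\ell^*$ in which $w=au_1$ and $w'=bu_1+cu_2$ with $c\neq 0$; then $1-\e{w}$ is a product of cyclotomic polynomials in $\e{u_1}$ alone, and reducing $1-\e{w'}$ modulo any such factor yields a nonzero polynomial in $\e{u_2}$, so none of them divides $1-\e{w'}$.
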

 Observe that the elements $\tau\in \Ke(M^\T)$ satisfying \eqref{k cdt} indeed form a ring. Moreover condition \eqref{k cdt} is equivalent to requiring $$\tau(p)-\tau (q)=\tilde{f}\,(1-e^{- 2\pi \il w(e)}) \quad \mbox{for some} \quad \tilde{f}\in R(\T),$$
 thus it is sufficient to check condition \eqref{k cdt} on the edges of the oriented GKM graph.
 \subsection{Symplectic toric manifolds as GKM spaces}\label{toric GKM}
 In the following sections we will focus on symplectic toric manifolds, i.e. Hamiltonian $\T$-spaces $(M,\omega,\T,\psi)$ with $\dim(\T)=\displaystyle\frac{\dim(M)}{2}$. Observe that for every $p\in M^\T$, the isotropy weights at $p$ form a $\Z$-basis of $\ell^*$, the weight lattice of $\mathfrak{t}^*$, hence they are
 pairwise linearly independent. Thus
 symplectic toric manifolds are a special class of 
 GKM spaces. Moreover, their oriented GKM graph can be recovered from the one-skeleton of the image of the moment map $\psi(M)$:
 \begin{itemize}
 \item The vertices of the GKM graph are the vertices of the polytope.
 \item The oriented edges of the GKM graph are precisely the edges of the polytope, oriented by using $\mu$.
 \item  The weight labeling the edge $e=\overrightarrow{p\,q}$ is precisely the primitive element  $w\in \ell^*$ such that 
 $\psi(q)-\psi(p)=m\,w$, for some $m\in \R_{>0}$. 
 \end{itemize}
\subsection{Definition of the Local Index}\label{section local index}
Let $(M,\omega,\T,\psi)$ be a symplectic toric manifold. 
In this subsection, for every $q\in M^\T$, we construct a map
$$
\ii_q\colon \K \to R(\T)
$$
called the local index map at $q$.
Before defining the local index map, we set some notation and easy facts. 

Recall that the {\bf equivariant K-theory push-forward map} $\textrm{Ind}\colon \K \to R(\T)$, also called the {\bf (equivariant) index homomorphism}, can be computed by using the Atiyah-Segal formula \cite{AS} in the following way:
\begin{equation}\label{AS index}
\textrm{Ind}(\tau)=\sum_{p\in M^\T}\frac{\tau(p)}{\displaystyle\prod_{w_j\in W_p}(1-e^{2\pi \il w_j})}\,.
\end{equation}
Note that this is not in general a homomorphism of rings, but only a homomorphism of $R(\T)$-modules.

For the trivial bundle $\mathbf{1}\in \Ke(M)$ one has: 
$$
\textrm{Ind}(\mathbf{1})=\sum_{p\in M^\T}\frac{1}{\displaystyle\prod_{w_j\in W_p}(1-e^{2\pi \il w_j})}.
$$
For any generic $\overline{\xi}\in \mathfrak{t}$, consider the restriction map $r_{\bar{\xi}}\colon \mathcal{K}_\T(\p)\to \mathcal{K}_{S^1}(\p)$, where
$S^1=\exp(t\,\overline{\xi})$, and let $\mu\colon M\to \R$ be the associated $\overline{\xi}$-component of the moment map.
Then by \cite[Corollary 2.7]{Ha} we have that $r_{\bar{\xi}}(\ii(\mathbf{1}))$ is the number of
fixed points with no negative weights w.r.t.\ $\mu$, which is always $1$ as $M$ is connected and the action Hamiltonian.
Since $\overline{\xi}$ was generic, we obtain
\begin{equation}\label{index 1-2}
\ii(\mathbf{1})=1.
\end{equation}

Let $w_1,\ldots,w_{\lambda_q}$
be the weights in the negative normal bundle of $\mu$ at $q\in M^\T$.  Define $\mathcal{H}_q$ to be $\lambda_q$-dimensional affine subspace of $\R^n$ given by $\psi(q)+\R\langle w_1,\ldots,w_{\lambda_q}\rangle$.
 It is easy to see that $\mathcal{H}_q\cap \psi(M)$ is a $\lambda_q$-dimensional face of the polytope $\psi(M)$. Moreover 
 $$H_q:=\psi^{-1}(\mathcal{H}_q\cap \psi(M))$$
 is a closed $\T$-invariant symplectic submanifold of $M$, 
  which can be thought of as the ``{\bf flow-down} at $q$". We denote by $s_0=q,s_1,\ldots,s_j$ the $\T$-fixed points in $H_q$.  
 Similarly, define $F_q$ to be 
 \begin{equation*}\label{def fq}
 F_q=\psi^{-1}(\mathcal{F}_q\cap \psi(M)),
 \end{equation*}
 where $\mathcal{F}_q$ is the $(n-\lambda_q)$-dimensional affine space given by 
 $\psi(q)+\R\langle w_{\lambda_q+1},\ldots,w_{\lambda_n}\rangle$.   
 Note that $F_q$ is also a closed $\T$-invariant symplectic submanifold of $M$, 
  which can be thought as the ``{\bf flow-up} at $q$". Therefore, \emph{mutatis mutandis}, what is claimed below for $H_q$ also holds for $F_q$.
  
 The action of $\T$ on $H_q$ is clearly not effective. In fact the subtorus $$\T_q^0=\exp(\{\xi\in \g\mid w_i(\xi)=0,\;i=1,\ldots,\lambda_q\})\subset \T$$ acts trivially on an open neighborhood of $q$ in $H_q$, and hence it
acts trivially on $H_q$. Thus $H_q$ is acted on by the quotient torus $\T_q=\T/\T^0_q$ 
 and the Lie algebra $Lie(\T_q)^*$ can be identified with $\R\langle w_1,\ldots,w_{\lambda_q}\rangle$. Moreover the action of $\T_q$
on $H_q$ is effective. Indeed, every point $s\in H_q$ fixed by $\T_q$ is also a $\T$-fixed point in $M$, and the weights $w'_1,\ldots,w'_{\lambda_q}$ of the
isotropy $\T_q$ action on $T_sH_q$ correspond to those weights $w'_1,\ldots,w'_n$ of the isotropy action of $\T$ on $T_sM$ which belong to $\R\langle w_1,\ldots,w_{\lambda_q}\rangle$.
Moreover, since the $\T$-action on $M$ is toric, we have that $\Z\langle w'_1,\ldots,w'_n\rangle=\ell^*$, which implies that
$\Z\langle w'_1,\ldots,w'_{\lambda_q}\rangle=\ell^*_q$, where $\ell^*_q=\ell^*\cap \R\langle w_1,\ldots,w_{\lambda_q}\rangle$. 
Thus the action is effective.
We conclude that $H_q$
is endowed with a symplectic toric action of $\T_q$. 

Let $S^2$ be the $2$-sphere
endowed with the standard symplectic form and a symplectic toric $S^1$ action rotating the sphere $S^2$, with speed $1$, keeping the north and the south poles fixed. Consider a moment map, $h\colon S^2\to \R$, such that $0=h(S)<h(N)=1$, where
$N$ and $S$ denote the north and the south poles respectively. 
Let $H_q\times S^2$ be the symplectic manifold with symplectic form given by the sum of the pull-backs of the symplectic forms on 
$H_q$ and $S^2$. This manifold is endowed with the (non effective) Hamiltonian action of
$\T\times S^1$, i.e. $(t,t')\ast (s,s')=(t\ast  s, t'\ast s' )$ for every $(t,t')\in \T\times S^1$ and $(s,s')\in H_q\times S^2$. The $\T\times S^1$ fixed points in $H_q\times S^2$ are given by $$H_q^{\T}\times \{S\}=\{q_0=(s_0,S),\ldots,q_j=(s_j,S)\}$$
and $$H_q^{\T}\times \{N\}=\{q'_0=(s_0,N),\ldots,q'_j=(s_j,N)\},$$ for all $i=0,\ldots,j$, where $q_0=(s_0,S)=(q,S)$.

Note that the splitting $\T\times S^1$ allows us to regard $\mathfrak{t}^*$ as a subspace of $Lie(\T\times S^1)^*$. With abuse of notation we consider the weights of the $\T$-action as elements in $Lie(\T\times S^1)^*$.
Let $-w_0\in Lie(\T\times S^1)^*$ be the weight of the isotropy 
$\T\times S^1$ action at $T_{q}(\{q_0\}\times S^2)\subset T_{q_0}\,(H_q\times S^2)$ (see Figure \ref{figure cutting}). 

\begin{figure}[htbp]
\begin{center}
\psfrag{q0}{\footnotesize{$q_0$}}
\psfrag{q1}{\footnotesize{$q_1$}}
\psfrag{q2}{\footnotesize{$q_2$}}
\psfrag{q0p}{\footnotesize{$q_0'$}}
\psfrag{q1p}{\footnotesize{$q_1'$}}
\psfrag{q2p}{\footnotesize{$q_2'$}}
\psfrag{w0}{\footnotesize{$w_0$}}
\psfrag{w1}{\footnotesize{$w_1$}}
\psfrag{w2}{\footnotesize{$w_2$}}
\psfrag{w20}{\footnotesize{$w_2+w_0$}}
\psfrag{w10}{\footnotesize{$w_1+w_0$}}
\psfrag{w12}{\footnotesize{$w_1-w_2$}}
\psfrag{p0}{\footnotesize{$p_0$}}
\psfrag{p1}{\footnotesize{$p_1$}}
\psfrag{p2}{\footnotesize{$p_2$}}
\psfrag{s0}{\footnotesize{$s_0$}}
\psfrag{s1}{\footnotesize{$s_1$}}
\psfrag{s2}{\footnotesize{$s_2$}}
\includegraphics[width=12cm]{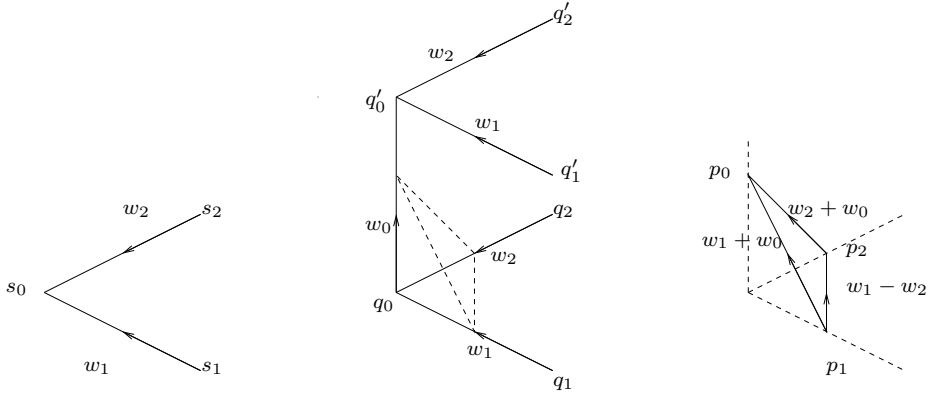}
\caption{Local pictures for $H_q$, $H_q \times S^2$ and $\widetilde{H}_q^{\epsilon}$.}
\label{figure cutting}
\end{center}
\end{figure}
Around $q_0=(q,0)$, the moment map of the Hamiltonian $\T\times S^1$ action on
$H_q\times S^2$ can be written as
$$
\psi'(z_1,\ldots,z_{\lambda_q},z)=-w_0 \frac 1 2 |z|^2+w_1 \frac 1 2 |z_1|^2+\ldots+w_{\lambda_q} \frac 1 2 |z_{\lambda_q}|^2+\psi(q)\;,
$$
where $(z_1,\ldots,z_{\lambda_q},z)$ are complex coordinates on $H_q\times S^2$ around $q_0$.
Since the $\T$-action on $M$ is toric, the weights of the isotropy $\T$-action at
$q$, given by $w_1,\ldots,w_n$, form a $\Z$-basis of $\ell^*$, the weight lattice of $\mathfrak{t}^*$; hence
$w_0,w_1,\ldots,w_n$ is a $\Z$-basis of the dual lattice of $\T\times S^1$.
Let $\xi_q$ be the element of $Lie(\T\times S^1)$ such that $w_0(\xi_q)=-1$,
$w_i(\xi_q)=1$ for $i=1,\ldots,\lambda_q$, and $w_i(\xi_q)=0$ for $i=\lambda_{q}+1,\ldots,n$.
Then $C_q=\exp(\xi_q)\subset \T\times S^1$ is a circle acting effectively on $H_q\times S^2$
with moment map $\varphi$ which around $q_0$ is given by
$$
\varphi(z_1,\ldots,z_{\lambda_q},z)= \frac 1 2 |z|^2+\frac 1 2 |z_1|^2+\cdots+ \frac 1 2 |z_{\lambda_q}|^2+\psi(q)(\xi_q)\,.
$$

Let $\widetilde{H}_q^{\epsilon}$ be the symplectic reduction of $H_q\times S^2$ at $\epsilon+\psi(q)(\xi_q)$ with respect to the action of $C_q$, for $\epsilon>0$ sufficiently small, i.e. 
$$\widetilde{H}_q^{\epsilon}=\varphi^{-1}(\epsilon+\psi(q)(\xi_q))/C_q.$$ Observe that $\widetilde{H}_q^{\epsilon}$ is symplectomorphic to the complex projective space $\C \pp^{\lambda_q}$, 
endowed with the (Hamiltonian) residual action of $\widetilde{\T}_{q}=(\T\times S^1)/C_q$.
Denote its fixed points by $p_0,p_1,\ldots,p_{\lambda_q}$ with 
$$p_0 = \left(\, \varphi^{-1}(\epsilon+\psi(q)(\xi_q)) \cap (\{q\}\times S^2)\, \right)/C_q,$$ 
$$p_j = \left(\,\varphi^{-1}(\epsilon+\psi(q)(\xi_q)) \cap (\psi')^{-1}(\overrightarrow{q_j\,q_0})\,\right)/C_q$$
(see Figure \ref{figure cutting}).
Moreover, the tuples of isotropy weights of the
$\widetilde{\T}_{q}$ action at the fixed points of $\widetilde{H}_q^{\epsilon}$ are given by: $$W_{p_0}=\{w_0+w_1,\ldots,w_0+w_{\lambda_q}\}$$
and $$W_{p_i}=\{-(w_i+w_0),w_1-w_i,\ldots,w_{i-1}-w_i,w_{i+1}-w_i,\ldots,w_{\lambda_q}-w_i\}$$ for all $i=1,\ldots,\lambda_q$.
 
We are ready to define the Local Index map of a class $\tau\in \K$ at $q\in M^\T$, and divide its computation into the following steps:
$\;$\\

{\bf (i)} \emph{Mapping $\tau\in \K$ to $\mathcal{K}_{\T\times S^1}(H_q\times S^2):$}\\
Consider the following commuting diagram
 $$
 \xymatrix{ 
\Ke(M) \ar[d] \ar[r]^-{r_1}\ar[drr]^-{\;\;\;\;\;\;\;\;\widetilde{i}^*\circ \widetilde{r}} &  \Ke(H_q)\ar@{.>}[d] \ar[r]^-{r_2}  & \mathcal{K}_{\T\times S^1}(H_q\times S^2)\ar[d]_{\widetilde{i}^*}\\
\Ke(M^\T) \ar[r] &  \Ke(H_q^\T) \ar[r]  & \mathcal{K}_{\T\times S^1}((H_q\times S^2)^{\T\times S^1}) \\
}
$$
where the maps involved are defined as follows.
The map $r_1$ is the restriction induced by the $\T$-equivariant inclusion
$H_q\hookrightarrow M$. To define the map $r_2 \colon  \Ke(H_q) \to \mathcal{K}_{\T\times S^1}(H_q\times S^2)$ note that, as $S^1$ acts trivially on $H_q$ and $\T$ acts trivially on $S^2$, we have a canonical identification
$$ \mathcal{K}_{\T\times S^1}(H_q\times S^2) \cong \mathcal{K}_{\T}(H_q)\otimes \mathcal{K}_{S^1}(S^2).$$
Define $r_2$ as tensoring with $1 \in \mathcal{K}_{S^1}(S^2)$, i.e. $r_2( r_1 (\tau))$ is the image of $r_1 (\tau) \otimes 1$ in $\mathcal{K}_{\T\times S^1}(H_q\times S^2)$ under the above identification. 
The vertical arrows are the restrictions of the equivariant K-theory rings to those of the fixed point sets; in particular
$\widetilde{i}\colon (H_q\times S^2)^{\T\times S^1}\to H_q\times S^2$. Let $\widetilde{r}$ be $r_2\circ r_1$. 

In Step {\bf (i)} we map
$\tau\in \K$ to $\widetilde{r}(\tau)\in \mathcal{K}_{\T\times S^1}(H_q\times S^2)$. In practice, we work with 
$\widetilde{i}^*\circ \widetilde{r}(\tau)\in \mathcal{K}_{\T\times S^1}\big((H_q\times S^2)^{\T\times S^1}\big)$.
An explicit procedure is given as follows:
\begin{itemize}
\item determine the restrictions of $\tau\in \K$ to $H_q^\T=\{s_0=q,\ldots,s_j\}$;
\item then $\widetilde{i}^*\circ\widetilde{r}(\tau)$ is $\tau(s_l)$ at $q_l$ and $q'_l$, for all $l=0,\ldots,j$. 
\end{itemize}
Note that these restrictions
live in $R(\T)$, which we regard as a subring of $R(\T\times S^1)$ using the identifications $R(\T)=\Z[\e{w_1},\ldots,\e{w_n},\e{(-w_1-\ldots -w_n)}]$
and $R(\T\times S^1)=\Z[\e{w_0},\e{w_1},\ldots,\e{w_n},\e{(-w_1-\ldots -w_n-w_0)}]$ (see \eqref{id R(T)}).

{\bf (ii)} \emph{Mapping $\widetilde{r}(\tau) \in \mathcal{K}_{\T\times S^1}(H_q\times S^2)$ to $\mathcal{K}_{\widetilde{\T}_{q}}(\widetilde{H}^{\epsilon}_q)$:} 
\begin{equation}\label{k map}
\kappa\colon \mathcal{K}_{\T\times S^1}(H_q\times S^2)\to \mathcal{K}_{\widetilde{\T}_{q}}(\widetilde{H}^{\epsilon}_q)
\end{equation}
In practice, given $\widetilde{r}(\tau)\in\mathcal{K}_{\T\times S^1}(H_q\times S^2)$ we compute
the image of $\kappa(\widetilde{r}(\tau))$ in $\mathcal{K}_{\widetilde{\T}_{q}}((\widetilde{H}^{\epsilon}_q)^{\widetilde{\T}_{q}})$. That is, we find the restrictions of $\kappa (\widetilde{r}(\tau) )$ to each of the $\lambda_q+1$ fixed points $p_0,p_1,\ldots,p_{\lambda_q}$ of $\widetilde{H}_q^{\epsilon}\cong \C \pp^{\lambda_q}$ using the recipe below.
The value $\widetilde{r}(\tau) (q_0)$ (equal to the value $\tau(s_0)$) is an element of $R(\T)$, which we regard as a subring of $R(\T\times S^1)$. 
Since the weights
$w_1,\ldots,w_n$ form a $\Z$-basis of $\ell^*$, we can express
$\tau(q_0)$ as $f(\e{w_1},\ldots,\e{w_n},\e{(-w_1-\ldots-w_n)})$, where $f(X_1,\ldots,X_{n+1})$ is a polynomial with integer
coefficients. Let 
$$f_0=f(\e{(w_0+w_1)},\ldots,\e{(w_0+w_{\lambda_q})},\e{w_{\lambda_q+1}},\ldots,\e{w_n},\e{(-w_1-\ldots-w_n-\lambda_qw_0)})$$
and 
$$
f_j=f(\e{(w_1-w_j)},\ldots,\e{(w_{\lambda_q}-w_j)},\e{w_{\lambda_q+1}},\ldots,\e{w_n},\e{(-w_1-\ldots-w_n+\lambda_qw_j)})\,,
$$
for all $j=1,\ldots,\lambda_q$ 
(i.e.\ the $j$-th argument of $f_j$ is $e^0=1$). Define the restriction of $\kappa(\tau)$ to the fixed point set to be $f_j$ at the point $p_j$, for all $j=0,\ldots,\lambda_q$.
Observe that this class does represent indeed an element of $\mathcal{K}_{\widetilde{\T}_q}(\widetilde{H}^{\epsilon}_q)$. By 
Theorem \ref{GKM k} it is sufficient to check that
$f_0-f_j\equiv 0$ mod $(1-\e{(w_0+w_j)})$ and $f_i-f_j\equiv 0$ mod $(1-\e{(w_i-w_j)})$, i.e. one needs to check that $f_0$ is equal to $f_j$ when setting $w_0+w_j=0$
and $f_i$ is equal to $f_j$ when setting $w_i-w_j=0$, which follows easily from the definition
of the $f_i$'s.
\begin{rmk}\label{remark kirwan map}
It is worth observing (though we are not going to use it) that the above map $\kappa$ is the Kirwan map relating the K-theory rings of a manifold and of its symplectic reduction. We devote the Appendix to a careful description of the Kirwan map. \end{rmk}
\begin{defin}\label{definition local index}
 Let $q\in M^\T$ and $\tau\in \Ke(M)$. The {\bf local index} of $\tau$ at $q$, denoted by $\ii_q(\tau)$, is the element of $R(\T)$ defined as
 $$
 \ii_q(\tau):= \alpha_q \circ Ind ((\kappa\circ \widetilde{r})(\tau))\,,
 $$
 where $Ind\colon \mathcal{K}_{\widetilde{\T}_q}(\widetilde{H}_q^{\epsilon})\to R(\widetilde{\T}_q)$ is the index homomorphism, and 
 $\alpha_q\colon  R(\widetilde{\T}_q)=\mathcal{K}_{\widetilde{\T}_q}(\textrm{pt}) \rightarrow \mathcal{K}_{\T}(\textrm{pt})=R(\T)$ is the homomorphism sending $w_0$ to $0$ and $w_j$ to itself, for all $j\neq 0$. The {\bf local index map} at $q\in M^\T$ is the map 
\begin{equation}\label{index map eq}
\ii_q\colon \Ke(M)\to R(\T)\,
\end{equation}
that assigns $\ii_q(\tau)$ to $\tau$.
\end{defin}
Note that the only information needed for computing $\ii_q(\tau)$
are the weights of $\T$-action on $T_q H_q$ and the value of $\tau$ at $q$. 
Moreover the computation is relatively easy thanks to the combinatorial recipe for computing the restriction of $(\kappa\circ \widetilde{r})(\tau)\in \mathcal{K}_{\widetilde{\T}_q}(\widetilde{H}_q^{\epsilon})$ to the $\widetilde{\T}_q$ fixed point set given above, and thanks to the Atiyah-Segal formula \eqref{AS index}.

\begin{rmk}\label{comparison}
The above definition of the local index is inspired by, though different from, the one in \cite{GK}. 
 In our definition of local index, we make an explicit choice of the circle $C_q$ used for obtaining $\widetilde{H}_q^{\epsilon}$ through symplectic reduction. The choice  is different for each $q\in M^\T$. With this choice the reduced space is smooth and one can explicitly calculate the local index using the algorithm we provide. 
 \end{rmk}
\begin{exm}
As an example we calculate the local index of the second class in Figure \ref{fig:hirzebruchclasses} at the point $q$ where its value is 
$(1-e^{2 \pi i (x-y)})e^{2\pi i y}$. Call this class $\tau$. 
\begin{figure}[htbp]
\begin{center}
\includegraphics[width=12cm]{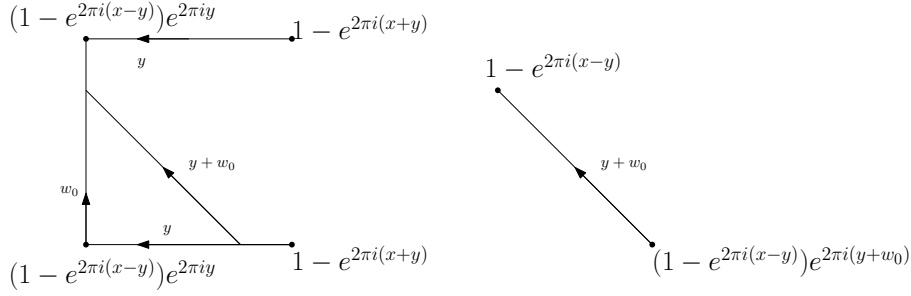}
\caption{The values of the class $\widetilde{r}(\tau)$ at the fixed points of $H_q \times S^2$ close to $(q,0)$ and the values of the class $\kappa(\tau)$ at the fixed points of $\widetilde{H}_q^{\epsilon}$.}
\label{fig:local index}
\end{center}
\end{figure}
Following the notation from the definition of the local index, we have
\begin{align*}
\tau(q)&=f(e^{2\pi i y},e^{2 \pi i (x-y)})=(1-e^{2 \pi i (x-y)})e^{2\pi i y},\\
\kappa(\tau)(p_0)&=f_0=f(e^{2\pi i (y+w_0)},e^{2 \pi i (x-y)})=(1-e^{2 \pi i (x-y)})e^{2\pi i (y+w_0)},\\
\kappa(\tau)(p_1)&=f_1=f(1,e^{2 \pi i (x-y)})=1-e^{2 \pi i (x-y)}.
\end{align*}
The values of the class $\widetilde{r}(\tau)$ at the fixed points of $H_q \times S^2$ close to $(q,0)$ are presented on the left side of Figure \ref{fig:local index}, while the picture on the right represents the values of the class $\kappa(\tau)$ (c.f.\ the Appendix). 
The local index of $\tau$ at $q$ is equal to 
\begin{align*}
\ii_q(\tau)&=\frac{(1-e^{2 \pi i (x-y)})e^{2\pi i (y+w_0)}}{1-e^{2\pi i (y+w_0)}}+\frac{1-e^{2 \pi i (x-y)}}{1-e^{-2\pi i (y+w_0)}}\\
&=-\frac{1-e^{2 \pi i (x-y)}}{1-e^{-2\pi i (y+w_0)}}+\frac{1-e^{2 \pi i (x-y)}}{1-e^{-2\pi i (y+w_0)}}=0.\\
\end{align*}
\end{exm}

\begin{prop}\label{prop indexproperties}
The local index satisfies the following properties:
\begin{itemize}
\item[(i)] (Additivity) for any $\tau, \tau' \in \Ke(M)$, and any $q\in M^\T$ have $$\ii_q(\tau +\tau')=\ii_q(\tau) + \ii_q(\tau').$$
\item[(ii)] If $\tau(q)=f \, \ee(q)$ for some $ f \in R(\T)$ 
then $$\ii_q(\tau)=f=\tau(q)/\ee(q).$$
In particular $\ii_q(\tau)=0$ if $\tau(q) = 0$. \\
\item[(iii)] If $\tau'(q)=0$ then $\ii_q(\tau + f \, \tau')=\ii_q (\tau)$, for every $f \in R(\T)$,
\end{itemize}
\end{prop}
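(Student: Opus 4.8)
The plan is to reduce all three statements to the observation recorded just after Definition~\ref{definition local index}: $\ii_q(\tau)$ depends only on the single value $\tau(q)\in R(\T)$. Concretely, following Step~(ii) of that definition, one writes $\tau(q)=f(\e{w_1},\dots,\e{w_n},\e{(-w_1-\dots-w_n)})$, and then the restriction of $\kappa(\widetilde{r}(\tau))$ to $p_j$ is $\Phi_j(\tau(q))$, where $\Phi_j\colon R(\T)\to R(\widetilde{\T}_q)$ is the ring homomorphism carrying out the prescribed substitution of generators (for instance $\Phi_0$ sends $\e{w_i}\mapsto\e{(w_0+w_i)}$ for $i\le\lambda_q$ and fixes $\e{w_i}$ for $i>\lambda_q$). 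This $\Phi_j$ is well defined precisely because $w_1,\dots,w_n$ is a $\Z$-basis of $\ell^*$, so the substitution respects the single relation defining $R(\T)$ in \eqref{id R(T)}. Hence $\ii_q(\tau)$ is obtained by applying, in order: $\tau\mapsto\tau(q)$; the substitutions $\Phi_0,\dots,\Phi_{\lambda_q}$ (which together determine $\kappa(\widetilde{r}(\tau))\in\mathcal{K}_{\widetilde{\T}_q}(\widetilde{H}_q^{\epsilon})$ through its fixed-point restrictions); the Atiyah--Segal index homomorphism $\ii$; and $\alpha_q$. I would establish (i)--(iii) from this description.

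For (i), I would note that every map in this chain is additive: $\tau\mapsto\tau(q)$, each $\Phi_j$, and $\alpha_q$ are ring homomorphisms, and $\ii$ is additive (indeed $R(\T)$-linear) by the Atiyah--Segal formula~\eqref{AS index}; hence the composite $\ii_q$ is additive.

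For (ii), I would substitute $\tau(q)=f\,\ee(q)=f\prod_{i=1}^{\lambda_q}(1-\e{w_i})$ into the recipe and extract two cancellations. First, at $p_0$ one gets $\Phi_0(\tau(q))=\Phi_0(f)\prod_{i=1}^{\lambda_q}(1-\e{(w_0+w_i)})$, and since $W_{p_0}=\{w_0+w_1,\dots,w_0+w_{\lambda_q}\}$ this product is precisely the Atiyah--Segal denominator attached to $p_0$, so the $p_0$-term contributes $\Phi_0(f)$. Second, for $j\ge 1$ the homomorphism $\Phi_j$ sends $\e{w_j}\mapsto 1$, which kills the factor $(1-\e{w_j})$, so $\Phi_j(\tau(q))=0$ and the $p_j$-term vanishes. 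Therefore $\ii\big(\kappa(\widetilde{r}(\tau))\big)=\Phi_0(f)$, and applying $\alpha_q$ (which sets $w_0=0$, and thereby inverts $\Phi_0$ on $R(\T)$) yields $\ii_q(\tau)=f=\tau(q)/\ee(q)$. The special case $\tau(q)=0$ corresponds to $f=0$.

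Finally, for (iii): if $\tau'(q)=0$ then $(f\,\tau')(q)=f(q)\,\tau'(q)=0$, so $\ii_q(f\,\tau')=0$ by (ii), and additivity from (i) gives $\ii_q(\tau+f\,\tau')=\ii_q(\tau)+\ii_q(f\,\tau')=\ii_q(\tau)$; equivalently, one may invoke directly that $(\tau+f\,\tau')(q)=\tau(q)$ together with the reduction above. I expect the only genuinely non-formal point to be the bookkeeping in (ii), namely aligning each substituted numerator $\Phi_j(\tau(q))$ with the correct denominator $\prod_{w\in W_{p_j}}(1-\e{w})$ coming from the explicitly listed weight tuples, and verifying that $\alpha_q\circ\Phi_0$ is the identity of $R(\T)$; this becomes routine once the weight data is written out.
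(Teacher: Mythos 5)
Your proof is correct and follows essentially the same route as the paper: in (i) it invokes additivity of restriction, of the substitution homomorphisms $\Phi_j$ (the paper's $f\mapsto f_j$), of the Atiyah--Segal index, and of $\alpha_q$; in (ii) it observes that $\Phi_j$ kills $\ee(q)$ for $j\ge 1$ while $\Phi_0(\ee(q))$ is exactly the denominator at $p_0$, leaving $\Phi_0(f)$, which $\alpha_q$ sends back to $f$; and (iii) is deduced from (i) and (ii). The only cosmetic difference is that you make the dependence of $\ii_q$ on $\tau(q)$ alone explicit via the homomorphisms $\Phi_j$, which is a helpful reformulation but not a new argument.
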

The condition in (ii) is satisfied when $\tau(q')=0$ for all $q' \prec q$, see the proof of Proposition \ref{existence kirwan}.
Note that it is \emph{not} always true that $\ii_q(f \, \tau)=f \, \ii_q(\tau)$, where $f\in R(\T)$ and $\tau\in \Ke(M)$. In other words, the local index map $\ii_q\colon \Ke(M)\to R(\T)$
is not a morphism of $R(\T)$-modules. This is also true for the local index defined in \cite{GK} (see \cite[Remark 5.1]{GK}).

\begin{proof}[Proof of Proposition \ref{prop indexproperties}]
 The first property follows from additivity of the index homomorphism. 
 To prove the second property, present $f$ as a polynomial in $\e{w_j}$ where $w_1,\ldots,w_n$ are the weights of $\T$-action on $T_q M$,
 with $w_1,\ldots,w_{\lambda_q}$ being the weights of $\T$-action on $T_q H_q$, i.e. the weights in $ W_q^+$.
 Let $f_0$ be the polynomial obtained from $f$ by substituting $w_j$ with $w_j+w_0$ for $j=1,\ldots, \lambda_q$.  
Then the values of $\kappa(\widetilde{r}(f \tau))$ at the fixed points $p_0,\ldots,p_{\lambda_q}$ of $\widetilde{H}^{\epsilon}_q$ are
$$f_0 \,\prod_{j=1}^{\lambda_q}(1-\e{(w_j+w_0)}), 0,\ldots, 0.$$
Therefore, from \eqref{AS index} it follows that the index of $\kappa(\widetilde{r}(f \tau))$ is $f_0$,
and the local index $\ii_q(f \tau)$ is the image of $f_0$ under the homomorphism  $ \mathcal{K}_{\widetilde{\T}_q}(\textrm{pt}) \rightarrow \mathcal{K}_{\T}(\textrm{pt})$ sending $w_0$ to $0$, so it is $f$.
The last property follows from the first two.
\end{proof}

 Recall that by $F_p$ we denote the flow-up manifold at $p\in M^\T$, as defined on page~\pageref{def fq}. We can now define {\em i-canonical classes}. 
 \begin{defin}\label{definition canonical classes} 
Let $(M,\omega,\T,\psi)$ be a symplectic toric manifold of dimension $2n$, together with 
a choice of a generic component of the moment map $\mu=\psi^{\overline{\xi}} \colon M\to \R$. Then for each $p\in M^\T$,
a Kirwan class $\tau_p \in \K$ satisfying the following properties:
\begin{enumerate}
 \item $\ii_q(\tau_p)=1$ for all points $q \in F_p\cap M^\T$;
\item $\ii_q(\tau_p)=0$ for all points $q \notin F_p\cap M^\T$;
\end{enumerate}
is called an {\bf i-canonical class} at (the fixed point) $p$.
 \end{defin}
\begin{rmk}\label{class 1 rmk}
 Note that the equivariant K-theory class of the trivial bundle $\mathbf{1}$ is an i-canonical class. Indeed, if $p_{min}\in M^\T$ is the fixed point at which $\mu$ attains its minimum, then $F_{p_{min}}$ is the whole manifold $M$ and for any $q \in M^\T$ we have that $\ii_q(\mathbf{1})=\ii (\mathbf{1}_{\C\pp^{\lambda_q}})=1$ (see \eqref{index 1-2}).
 \end{rmk}

Observe that as the i-canonical classes are a special set of Kirwan classes, we have that: 
\begin{enumerate}
\item[$\bullet$] if for each $p\in M^\T$ there exists an i-canonical class $\tau_p$, then the set $\{\tau_p\}_{p\in M^\T}$ is a basis for $\Ke(M)$ as a module over $R(\T)$;
\item[$\bullet$] $\tau_p(p)=\ee(p)$;
\item[$\bullet$] $\tau_p(q)=0$ for all $q \in M^\T \setminus \{p\}$ with $q \prec p$.
\end{enumerate}
We will see later that a stronger condition is true: $\tau_p(q)=0$ for all $q \in M^\T \setminus V_p^+$, where $V_p^+$ is defined in Definition \ref{def vpplus}.

As we remarked in Section \ref{sec:1}, Kirwan classes are never unique, unless $M$ is a point. 
Conditions (1) and (2) in Definition \ref{definition canonical classes} guarantee that
if i-canonical classes exist then they are unique, hence their name:
they are \emph{canonically associated} to $(M,\omega,\T,\psi,\overline{\xi})$.
The ``i'' refers to ``index", as they are defined using the notion of local index.
\begin{prop}[Uniqueness of canonical classes]\label{uniqueness}
 If an i-canonical class $\tau_p$ exists then it is unique.
  \end{prop}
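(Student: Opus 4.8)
The plan is to prove uniqueness by a downward induction on the $\prec$-order of the fixed points, using the fact that the difference of two i-canonical classes at the same fixed point $p$ is a Kirwan class-like object whose local indices all vanish, and then showing that such an object must be zero. First I would suppose $\tau_p$ and $\tau_p'$ are both i-canonical classes at $p$ and set $\sigma = \tau_p - \tau_p' \in \Ke(M)$. By additivity of the local index (Proposition \ref{prop indexproperties}(i)) together with conditions (1) and (2) in Definition \ref{definition canonical classes}, we get $\ii_q(\sigma) = \ii_q(\tau_p) - \ii_q(\tau_p') = 0$ for every $q \in M^\T$. Moreover, since $\tau_p$ and $\tau_p'$ are Kirwan classes at $p$, both satisfy $\sigma(q)=0$ for all $q \prec p$ and $\sigma(p) = \ee(p) - \ee(p) = 0$; so in fact $\sigma(q)=0$ for all $q \preceq p$. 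The goal is to show $\sigma = 0$, i.e. $\sigma(q)=0$ for all $q \in M^\T$, whence by injectivity of $i^*$ (Theorem \ref{facts K theory}) we conclude $\tau_p = \tau_p'$.

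The key step is the following claim, which I would prove by induction along $\prec$: if $\sigma \in \Ke(M)$ satisfies $\ii_q(\sigma)=0$ for all $q \in M^\T$ and $\sigma(q_0)=0$ for all $q_0 \preceq r$ up to some fixed point $r$, and if $r'$ is the successor of $r$ in the $\prec$-order, then $\sigma(r')=0$ as well. To see this, note that $\sigma$ restricted to $M_{r'}^-$ vanishes (its value is $0$ at every fixed point $q_0 \prec r'$, and by the injectivity of the map \eqref{inj} in Theorem \ref{facts K theory} applied to $M_{r'}^-$, this forces $\sigma|_{M_{r'}^-}=0$). By the argument used in the proof of Proposition \ref{existence kirwan}, this implies $\sigma(r') = f\,\ee(r')$ for some $f \in R(\T)$. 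Now apply property (ii) of Proposition \ref{prop indexproperties}: since $\sigma(r') = f\,\ee(r')$, we have $\ii_{r'}(\sigma) = f$. But $\ii_{r'}(\sigma)=0$ by hypothesis, so $f=0$ and hence $\sigma(r')=0$. Starting the induction from $r = p$ (where we already know $\sigma$ vanishes up to and including $p$) and marching up through $p_{N}$ gives $\sigma \equiv 0$ on the whole fixed point set.

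The main obstacle, and the only genuinely delicate point, is justifying that $\sigma|_{M_{r'}^-} = 0$ implies $\sigma(r') = f\,\ee(r')$ — that is, re-running the exact-sequence argument from the proof of Proposition \ref{existence kirwan} (analysis of diagram \eqref{diag thom} and the short exact sequence \eqref{ses}). Concretely: $\sigma$ restricts to a class in $\Ke(M_{r'}^+)$ (by surjectivity of \eqref{surj} we may even think of it as coming from all of $M$), and its image under $\beta_{r'}$ in $\Ke(M_{r'}^-)$ is the restriction of $\sigma$ to $M_{r'}^-$, which is $0$; hence $\sigma|_{M_{r'}^+} \in \ker(\beta_{r'}) = \operatorname{im}(\alpha_{r'})$, and chasing through the Thom isomorphism shows its value at $r'$ is a multiple of $\ee(r')$. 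Once this is in place, everything else is a formal consequence of the additivity and the computation $\ii_{r'}(f\,\ee(r')) = f$ from Proposition \ref{prop indexproperties}(ii). I would present the downward/upward induction cleanly and cite Theorem \ref{facts K theory} and Proposition \ref{prop indexproperties} for the two ingredients rather than repeating their proofs.
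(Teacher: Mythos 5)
Your proof is correct. The overall structure mirrors the paper's: set $\sigma=\tau_p-\tau_p'$, observe its local index vanishes everywhere while its support lies strictly above $p$, locate the first fixed point (in $\prec$-order) where $\sigma$ could be nonzero, deduce that the value there is a multiple of $\ee(\cdot)$, and conclude from Proposition~\ref{prop indexproperties}(ii) that the multiplier is zero. The one genuine difference is the tool you use for the key divisibility step: you re-run the Morse-theoretic argument from the proof of Proposition~\ref{existence kirwan} (the short exact sequence~\eqref{ses} and the Thom isomorphism/Euler-class computation in diagram~\eqref{diag thom}), whereas the paper instead invokes Theorem~\ref{GKM k} and the GKM divisibility relations at the edges of $\Gamma^o$ incident to $q$. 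Your route is slightly more robust in principle, since it does not rely on the GKM description of the image of $i^*$; it would go through verbatim for any Hamiltonian $\T$-space with discrete fixed points, not just GKM/toric ones. The paper's route is shorter in context because Theorem~\ref{GKM k} is already stated and the combinatorics of the toric GKM graph make the divisibility immediate. Both approaches are perfectly sound here; the distinction between your upward induction and the paper's "take a $\prec$-minimal offending $q$" is purely one of phrasing.
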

\begin{proof}
 Suppose that there exist two K-theory classes $\tau_p$ and $\tau'_p$ satisfying all conditions of Definition \ref{definition canonical classes}, for some $p\in M^\T$.
 Then the class $\eta=\tau_p-\tau'_p$ is supported on points leaving above $p$ with respect to the order $\prec$ defined in \eqref{ordering fixed points}, and its local index is zero at all fixed points.
  If $\eta$ were nonzero then, by injectivity of \eqref{inj 2}, there would exist a fixed point $q \succ p$ with $\eta(q) \neq 0$.
 Take minimal such $q$ (minimal with respect to $\prec$). 
 Then, if $q_1,\ldots,q_{\lambda_q}$ are the fixed points in $H_q$ connected to $q$ through an edge of the (oriented) GKM graph
 $\Gamma$, we have that $q_j\prec q$, implying $\eta(q_j)=0$ for $j=1,\ldots, \lambda_q$. So Theorem \ref{GKM k}
 implies that $\eta(q)$ must be a (nonzero) multiple of $\ee(q)= \prod_{w_j\in W_q^+}(1-e^{2\pi \il w_j}) $. 
 But condition (ii) of Proposition \ref{prop indexproperties} would then imply that $\ii_q(\eta) \neq 0$. This contradiction proves that $\eta$ must be the zero class and
 hence $\tau_p=\tau'_p$.
\end{proof}
Section \ref{section construction} is devoted to proving that, indeed, for symplectic toric manifolds i-canonical classes always exist. 
\section{Construction}\label{section construction}

The main goal of this section is to prove the following 
\begin{prop}[Existence of i-canonical classes]\label{existence canonical}
Let $(M,\omega,\T,\psi)$ be a symplectic toric manifold of dimension $2n$, together with 
a choice of a generic component of the moment map $\mu=\psi^{\overline{\xi}} \colon M\to \R$. 
Then for each $p\in M^\T$, there exists an i-canonical class $\tau_p$.
\end{prop}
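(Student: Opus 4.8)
The plan is to construct $\tau_p$ by descending induction on the fixed points $q \succ p$, starting from the Kirwan class provided by Proposition \ref{existence kirwan} and correcting it one fixed point at a time so that the local-index conditions (1) and (2) of Definition \ref{definition canonical classes} are satisfied. First I would fix $p\in M^\T$ and choose any Kirwan class $\nu_p$ at $p$; by property (ii) of Kirwan classes together with Proposition \ref{prop indexproperties}(ii), we already know $\ii_q(\nu_p)=0$ for every $q\prec p$ and $\ii_p(\nu_p)=\nu_p(p)/\ee(p)=1$. The only failure can occur at fixed points $q\succ p$. The key observation is that, by the description in Section \ref{toric GKM} of the GKM graph of a symplectic toric manifold, a fixed point $q$ lies in $F_p\cap M^\T$ precisely when there is an increasing edge-path from $p$ to $q$ inside the face $\mathcal{F}_p\cap\psi(M)$; I would want the corrected class to have local index $1$ on exactly these $q$ and $0$ on all other $q\succ p$.

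The inductive step is the heart of the argument. Suppose we have a Kirwan class $\tau$ at $p$ such that conditions (1) and (2) already hold for all fixed points $q'$ with $q'\prec q$, for some $q\succ p$; I want to modify $\tau$ at $q$ without disturbing its values (and hence its local indices, by Proposition \ref{prop indexproperties}(iii)) at the earlier fixed points. The tool is the existence of a Kirwan class $\nu_q$ at $q$: it vanishes at all $q'\prec q$, equals $\ee(q)$ at $q$, and has $\ii_q(\nu_q)=1$. Set $c_q := 1 - \ii_q(\tau)$ if $q\in F_p\cap M^\T$, and $c_q := -\ii_q(\tau)$ if $q\notin F_p\cap M^\T$; then $\tau + c_q\,\nu_q$ still has the right restriction at every $q'\preceq q$ except that $\ii_q$ is now the desired value, because $\ii_q(\tau + c_q\nu_q)=\ii_q(\tau)+c_q$ by additivity (Proposition \ref{prop indexproperties}(i)) together with the fact that adding a class which is zero at all predecessors of $q$ and a multiple of $\ee(q)$ at $q$ changes the local index at $q$ additively by that multiple — this is exactly Proposition \ref{prop indexproperties}(ii)–(iii). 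Crucially, since $\nu_q$ vanishes at every $q'\prec q$, Proposition \ref{prop indexproperties}(iii) guarantees $\ii_{q'}(\tau+c_q\nu_q)=\ii_{q'}(\tau)$ for all such $q'$, so no previously-arranged condition is destroyed. Iterating over all $q\succ p$ in increasing $\prec$-order (there are finitely many) yields the class $\tau_p$ satisfying (1) and (2), and it is automatically a Kirwan class since each correction preserves $\tau_p(p)=\ee(p)$ and vanishing below $p$; uniqueness then follows from Proposition \ref{uniqueness}.

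The main obstacle I anticipate is making the bookkeeping airtight: one must verify that $\ii_q(\tau+c_q\nu_q)$ really depends only on $\ii_q(\tau)$ and $c_q$ and not on the full history of corrections — this rests on checking that after all corrections indexed by $q''\prec q$, the resulting class still vanishes at every fixed point $\prec q$ and is a multiple of $\ee(q)$ at $q$, so that Proposition \ref{prop indexproperties}(ii) applies verbatim. A second, subtler point is that the proposition only asserts existence of \emph{some} i-canonical class; one need not identify which $q\succ p$ lie in $F_p$ beyond what Proposition \ref{prop indexproperties} gives, so the combinatorial identification of $F_p\cap M^\T$ with the face of the polytope is used only to phrase conditions (1)–(2), not in the construction itself. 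Everything else is a straightforward finite induction using the three properties of the local index established in Proposition \ref{prop indexproperties} and the existence of Kirwan classes from Proposition \ref{existence kirwan}.
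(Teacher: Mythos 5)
Your construction is correct and, in its core mechanism, is the same as the paper's argument for the non-index-increasing case: start from a Kirwan class, then correct by multiples of Kirwan classes at higher fixed points, using additivity of the local index together with the fact that $\ii_q(c\,\nu_q)=c$ whenever $\nu_q$ is a Kirwan class at $q$ (since $(c\,\nu_q)(q)=c\,\ee(p)$, so Proposition \ref{prop indexproperties}(ii) applies). Where you genuinely differ: the paper splits the proof into the index-increasing case (where it shows the equivariant Poincar\'e dual $\eta_p$ is already i-canonical, Proposition \ref{classes from cohomology are canonical}) and the non-index-increasing case (where it corrects $\eta_p$ by multiples of the specific classes $\eta_{q_l}$ for $q_l\in V_p^+$); your argument treats both cases uniformly, starting from an \emph{arbitrary} Kirwan class and iterating over all $q\succ p$, without needing $V_p^+$, Lemma \ref{one inclusion}, or any of the technical lemmas about index-increasing GKM spaces. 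That is a real simplification for the bare existence statement. What you lose is the by-product the paper wants: that in the index-increasing case $\tau_p=\eta_p$, which it uses throughout Section \ref{ec}. The two classes produced do agree by Proposition \ref{uniqueness}, but your route does not exhibit the identification directly.

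One local error in your last paragraph should be fixed. You write that the bookkeeping ``rests on checking that after all corrections indexed by $q''\prec q$, the resulting class still vanishes at every fixed point $\prec q$ and is a multiple of $\ee(p)$ at $q$.'' This is false and also unnecessary: the corrected class $\tau$ does \emph{not} vanish below $q$ (it equals $\ee(p)$ at $p\prec q$, and is generally nonzero at already-corrected points between $p$ and $q$), so one cannot conclude $\tau(q)$ is a multiple of $\ee(p)$ at $q$, and Proposition \ref{prop indexproperties}(ii) does not apply to $\tau$ itself. But it is also not needed: by additivity, $\ii_q(\tau+c_q\nu_q)=\ii_q(\tau)+\ii_q(c_q\nu_q)$, and $\ii_q(c_q\nu_q)=c_q$ because $c_q\nu_q$ \emph{does} vanish at all $q'\prec q$ and equals $c_q\,\ee(p)$ at $q$; so Proposition \ref{prop indexproperties}(ii) is applied only to the correction term, not to $\tau$. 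The inductive step therefore goes through regardless of whether $\ii_q(\tau)$ has the explicit form $\tau(q)/\ee(p)$, and the claimed vanishing hypothesis should simply be dropped from your verification.
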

We will prove this by explicitly exhibiting a set of K-theory classes satisfying properties (1) and (2) in Definition \ref{definition canonical classes}.
The proof of Proposition \ref{existence canonical} is divided into two parts: the index increasing case (Proposition  \ref{classes from cohomology are canonical}) and non-index increasing case (Subsection \ref{toric non-increasing}). 
\begin{defin}\label{ii and nii}
Let $(M,\omega,\T,\psi,\overline{\xi})$ be a GKM space with oriented GKM graph $\Gamma^o=(V,E^o)$. 
Then the space is called {\bf index increasing} if for every edge $e=\overrightarrow{p\,q}\in E^o$ we have $\lambda_p<\lambda_q$, and {\bf non-index increasing} otherwise.
\end{defin}
The Hirzebruch surface in Figure \ref{fig:hirzebruchclasses} is an example of non-index increasing GKM space.

For toric manifolds there is a natural algorithm for constructing a basis of the equivariant K-theory ring consisting of special Kirwan classes
which are equivariant Poincar\'e duals to the flow up submanifolds.
As we will prove in Section \ref{toric increasing}, 
in the index increasing case these equivariant K-theory classes are indeed i-canonical classes 
(see Proposition \ref{classes from cohomology are canonical}). In the non-index increasing case we will need to modify them to make them ``canonical'', as explained
in Section \ref{toric non-increasing}.
Below we recall this algorithm. 

Let  $(M^{2n},\omega,\T, \psi)$ be a symplectic toric manifold, and let $\mu=\psi^{\overline{\xi}}\colon M\to \R$ be a generic component of the moment map.  Let $\Gamma=(V,E)$ (resp.\ $\Gamma^o=(V,E^o)$) be the associated GKM graph (resp.\ oriented GKM graph). This GKM space is not necessarily index increasing. 
We recall that for every $p\in M^\T$ the flow-up at $p$, denoted by $F_p$, is a $\T$-invariant submanifold of $M$ (see the discussion on page~\pageref{def fq}, Section \ref{section local index}). 
Thus the normal bundle to $F_p$, which we denote by $N(F_p)$, is $\T$-invariant, and for each $q\in M^\T\cap F_p$ the set of weights of the $\T$-representation on $N(F_p)|_q$ is given by $\{w(\overrightarrow{r\,q})\}$, where $r\in M^\T\setminus F_p$ and $\overrightarrow{r\,q}\in E$; note that such an edge does not necessarily belong to $E^o$. 
We have that the (K-theoretical) equivariant Euler class of the normal bundle $N(F_p)|_q$ is
\begin{equation}\label{eeKc normal}
\mathsf{e}_\T\big( N(F_p)|_q\big)=\prod_{
\begin{array}{l}
r\in M^\T \setminus F_p\\
\overrightarrow{r\,q}\in E
\end{array}
}
(1-e^{2\pi \il \,w(\overrightarrow{r\,q})})).
\end{equation}
In particular we have that $\mathsf{e}_\T\big( N(F_p)|_p\big)=\ee(p)$.

\begin{lemma}\label{def kirwanclasses}
Let  $(M^{2n},\omega,\T, \psi)$ be a symplectic toric manifold, and let $\mu=\psi^{\overline{\xi}}\colon M\to \R$ be a generic component of the moment map. 
For each $p\in M^\T$ define $\eta_p\in \Ke(M^\T)$ to be
\begin{displaymath} \eta_p(q)=
\begin{cases} 0\ & for\ q \in M^{\T}\setminus F_p\\
\mathsf{e}_\T\big( N(F_p)|_q\big)\ & for\ q \in  F_p\cap M^\T.
\end{cases}
\end{displaymath}
Then $\eta_p$ is an element of $\Ke(M)$, and it is a Kirwan class in the sense of Proposition \ref{existence kirwan}.
\end{lemma}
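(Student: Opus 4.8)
The plan is to verify the two conditions that characterize $\Ke(M)$ inside $\Ke(M^\T)$ according to Theorem \ref{GKM k}, and then to check the defining properties of a Kirwan class. First I would show that $\eta_p$, as defined by its values on the fixed point set, actually lies in the image of $i^*\colon \Ke(M)\to\Ke(M^\T)$. By Theorem \ref{GKM k} it suffices to check that for every edge $e=\overrightarrow{r\,s}\in E$ of the GKM graph one has $\eta_p(r)-\eta_p(s)\equiv 0 \pmod{1-e^{2\pi\il w(e)}}$. There are three cases according to how the edge $e$ meets the flow-up manifold $F_p$: (a) both $r,s\notin F_p$, in which case $\eta_p(r)=\eta_p(s)=0$ and there is nothing to prove; (b) both $r,s\in F_p$, in which case $F_p$ being a $\T$-invariant submanifold means the edge $e$ lies inside the GKM graph of $F_p$ and the normal weights at $r$ and at $s$ match up along $e$ modulo $w(e)$ (here one uses that $N(F_p)$ is a $\T$-equivariant bundle over $F_p$, so its restriction to the isotropy $2$-sphere associated to $e$ is a $\T$-bundle over $S^2$, whose weights at the two poles differ by multiples of $w(e)$); and (c) exactly one endpoint, say $r$, lies in $F_p$ and the other, $s$, does not — then $\eta_p(s)=0$, and the factor $(1-e^{2\pi\il w(e)})$ corresponding to the edge $\overrightarrow{s\,r}$ (which contributes a normal direction to $F_p$ at $r$, since $s\notin F_p$) appears explicitly in the product \eqref{eeKc normal} defining $\mathsf{e}_\T(N(F_p)|_r)=\eta_p(r)$, so the congruence holds trivially.

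Case (c) is the key point and requires knowing that if $\overrightarrow{s\,r}\in E$ with $r\in F_p$ and $s\notin F_p$, then $w(\overrightarrow{s\,r})$ really is one of the normal weights at $r$. This follows from the combinatorial description of $F_p$ on page~\pageref{def fq}: $F_p$ corresponds to the face $\mathcal{F}_p\cap\psi(M)$ of the moment polytope, so at the vertex $\psi(r)$ the edges of the polytope emanating from $\psi(r)$ split into those lying in the face (tangent to $F_p$) and those pointing out of the face (normal to $F_p$); the edge $\overrightarrow{s\,r}$ points to $\psi(s)$, which is not in the face, hence is a normal direction. I expect this case-(c) verification, and the analogous bookkeeping in case (b), to be the main obstacle — not because it is deep, but because one must carefully match the description of $F_p$ as a face of the polytope with the edge set of the GKM graph and the weight labels.

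Having shown $\eta_p\in\Ke(M)$, it remains to check properties (i) and (ii) of Proposition \ref{existence kirwan}. Property (i) is immediate: since $p\in F_p$, we have $\eta_p(p)=\mathsf{e}_\T(N(F_p)|_p)=\ee(p)$, which is exactly the last displayed identity before the lemma statement. For property (ii), I must show $\eta_p(q)=0$ for every $q\prec p$, i.e.\ $\mu(q)<\mu(p)$. Since $p$ is the minimum of $\mu$ on $F_p$ — indeed $F_p$ is the flow-up manifold at $p$, built from the positive normal directions, so $\mu|_{F_p}$ attains its minimum at $p$; combinatorially, $\psi(p)$ is the vertex of the face $\mathcal{F}_p\cap\psi(M)$ on which $\overline{\xi}$ is minimized — any $q$ with $\mu(q)<\mu(p)$ cannot lie in $F_p$, so $q\in M^\T\setminus F_p$ and thus $\eta_p(q)=0$ by definition. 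This completes the proof that $\eta_p$ is a Kirwan class.
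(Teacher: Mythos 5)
Your proof is correct and takes essentially the same approach as the paper: verify the GKM congruence condition of Theorem \ref{GKM k} edge by edge in the three cases (both endpoints outside $F_p$, exactly one inside, both inside $F_p$), with the "one in, one out" case using that the factor from the edge appears explicitly in the product \eqref{eeKc normal} and the "both in" case using that the $\T$-bundle $N(F_p)$ restricted to the isotropy sphere matches the normal weights at the two poles modulo $w(e)$; then properties (i) and (ii) of Proposition \ref{existence kirwan} follow from $N(F_p)|_p = N_p^-$ and from $p$ being the minimum of $\mu$ on the connected submanifold $F_p$. The paper orders the cases slightly differently and spells out the weight-matching isomorphism $\varphi\colon W_r\to W_s$ more explicitly in the "both in" case, but the underlying argument is identical.
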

The element $\eta_p\in \K$ defined in the above lemma is called the {\bf equivariant (K-theoretical) Poincar\'e dual} to the flow-up manifold $F_p$.
\begin{proof}
The proof of this Lemma is quite standard, but we include it here for completeness.
In order to prove that $\eta_p$ is indeed an element of $\Ke(M)$, we need to verify condition \eqref{k cdt} in Theorem \ref{GKM k}, for every edge $e$
of the GKM graph $\Gamma$. 

Let $\overrightarrow{r\,s}\in E$. 
If neither $r$ nor $s$ belong to $F_p$, then by definition of $\eta_p$ we have $\eta_p(r)=\eta_p(s)=0$, so \eqref{k cdt} holds.
If $r\in F_p$ but $s\notin F_p$, then by definition of $\eta_p(r)$ and \eqref{eeKc normal}, $\eta_p(r)=Q\,(1-e^{2\pi \il \,w(\overrightarrow{s\,r})}))$, for some $Q\in R(\T)$,
and \eqref{k cdt} holds. Similarly if  $s\in F_p$ but $r\notin F_p$.
Finally, suppose that both $r$ and $s$ belong to $F_p$. Consider the subtorus $\T'=\exp\{\xi \in Lie(\T)\mid w(\overrightarrow{r\,s})(\xi)=0\}\subset \T$ fixing the sphere
$S^2$ associated to the edge $\overrightarrow{r\,s}$. Since $\T'$ acts trivially on $S^2$ the representations of
$\T'$ on $N(S^2)|_r$ and $N(S^2)|_s$ agree. This implies that there exists an isomorphism $\varphi\colon W_r\to W_s$ such that
for every $w\in W_r$
\begin{equation}\label{cdt isom}\varphi(w)-w=n_w\,w(\overrightarrow{r\,s})\end{equation} for some $n_w\in \Z$. 
Note that $S^2$ is also an invariant submanifold in $F_p$. Denote by $W_{r}^{F_p}$ (resp. $W_{s}^{F_p}$) the set of weights
of the $\T$-representation on the tangent space of $F_p$ at $r$ (resp. at $s$), and observe that the isomorphism $\varphi$ restricts to an isomorphism
from $W_r^{F_p}$ to $W_s^{F_p}$ satisfying \eqref{cdt isom}, and hence to an isomorphism from $W_r\setminus W_r^{F_p}$ to $W_s\setminus W_s^{F_p}$ satisfying \eqref{cdt isom}.  
Observe that 
$e^{2\pi \il \,w}-e^{2\pi \il \,\varphi(w)}=e^{2\pi \il \,w}(1-e^{2\pi \il \,n_w\,w(\overrightarrow{r\,s})})=Q'\,(1-e^{2\pi \il \,w(\overrightarrow{r\,s})}))$,
for some $Q'\in R(\T)$, and that the weights of the $\T$-representation on $N(F_p)|_{r}$ (resp. $N(F_p)|_{s}$) are precisely those in $W_r\setminus W_r^{F_p}$ (resp. $W_s\setminus W_s^{F_p}$). It follows that
$$\mathsf{e}_\T\big( N(F_p)|_r\big)-\mathsf{e}_\T\big( N(F_p)|_s\big)=Q''(1-e^{2\pi \il \,w(\overrightarrow{r\,s})}))$$ for some $Q''\in R(\T)$. 
By Theorem \ref{GKM k} we can conclude that $\eta_p$ is an element of $\Ke(M)$.

In order to finish the proof we need to show that $\eta_p$ satisfies properties (i) and (ii) of Proposition \ref{existence kirwan}. The first property follows from observing that,
by definition of $F_p$, the negative normal bundle
of $\mu$ at $p$ coincides with the normal bundle $N(F_p)$ at $p$. The second one follows from observing that $p$ is a minimum of $\mu$ on $F_p$ and that $F_p$ is connected. Therefore 
every fixed point $q$ in $F_p\setminus \{p\}$ satisfies $\mu(q)>\mu(p)$. Hence any $q'$ with $\mu(q')< \mu(p)$ must be in $M^\T \setminus F_p$, and thus 
$\eta_p(q')=0$.
This concludes the proof.
\end{proof}
\begin{rmk}\label{index eta 1}
In the above proof we show that the normal bundle of $F_p$ at $p$ coincides with the negative normal bundle of $\mu$ at $p$, so $\mathsf{e}_\T(N(F_p)|_p)=\ee(p)$. Therefore
Propostion \ref{prop indexproperties} (ii) implies that
$$
\ii_p(\eta_p)=1\,.
$$

\end{rmk}
\subsection{Toric manifolds: the index increasing case}\label{toric increasing}
In this Subsection we analyze index increasing GKM spaces and prove Proposition \ref{existence canonical} for this case. We use some
standard facts about such spaces which, for the sake of completeness, are proved in Section \ref{ec}.
\begin{prop}\label{classes from cohomology are canonical}
Let  $(M^{2n},\omega,\T, \psi)$ be a symplectic toric manifold, and let $\mu=\psi^{\overline{\xi}}\colon M\to \R$ be a generic component of the moment map.
Let $\Gamma^o=(V,E^o)$ be the associated oriented GKM graph, and assume it is index increasing. 
Then for each $p\in M^\T$, the Kirwan classes $\eta_p$ defined in Lemma \ref{def kirwanclasses} are the i-canonical classes $\tau_p$.
\end{prop}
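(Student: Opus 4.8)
The plan is to verify that the Poincaré dual classes $\eta_p$ from Lemma~\ref{def kirwanclasses} satisfy conditions (1) and (2) of Definition~\ref{definition canonical classes}; by uniqueness (Proposition~\ref{uniqueness}) this identifies them with the i-canonical classes. Since each $\eta_p$ is already a Kirwan class, all that remains is to compute $\ii_q(\eta_p)$ for every $q \in M^\T$. The key point is Proposition~\ref{prop indexproperties}(ii): the local index $\ii_q(\eta_p)$ depends only on $\eta_p(q)$ and the weights of the $\T$-action on $T_q H_q$. Concretely, when $\eta_p(q)$ is a multiple of $\ee(q) = \prod_{w_j \in W_q^+}(1-e^{2\pi \il w_j})$, the local index is exactly $\eta_p(q)/\ee(q)$. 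So the task reduces to showing: (a) if $q \notin F_p$ then $\eta_p(q) = 0$, which is immediate from the definition of $\eta_p$; and (b) if $q \in F_p \cap M^\T$ then $\eta_p(q) = \mathsf{e}_\T(N(F_p)|_q)$ is divisible by $\ee(q)$ with quotient $1$, i.e.\ $\mathsf{e}_\T(N(F_p)|_q) = \ee(q)$.

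The heart of the argument is therefore step (b): proving that for every $q \in F_p \cap M^\T$, the weights of the normal bundle $N(F_p)|_q$ are exactly the weights in $W_q^+$. Recall that $W_q^+$ consists of the weights labeling the edges of $E^o$ ending at $q$, i.e.\ those $\overrightarrow{r\,q} \in E^o$ with $\mu(r) < \mu(q)$, while the weights of $N(F_p)|_q$ are the $w(\overrightarrow{r\,q})$ with $r \in M^\T \setminus F_p$ and $\overrightarrow{r\,q} \in E$ (edge, not necessarily oriented). Here is where the index increasing hypothesis enters. I would argue that $F_p$, being the flow-up at $p$, contains every vertex $s$ reachable from $p$ by an increasing path, and in fact equals the union of such vertices together with the corresponding faces of the polytope; in particular, for $q \in F_p$, every edge $\overrightarrow{r\,q} \in E^o$ oriented \emph{into} $q$ has $r \in F_p$ — because $F_p$ is a symplectic toric submanifold and the edges into $q$ within $F_p$ account for all $\lambda_q$ descending directions at $q$ once we know $\lambda_q$ equals the number of descending edges lying in $F_p$. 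The index increasing condition guarantees, via the Morse-theoretic/GKM structure (to be borrowed from Section~\ref{ec}), that the dimension count works out: $\dim F_p$ at $q$ plus the number of edges into $q$ leaving $F_p$ must reconcile so that all descending edges at $q$ stay in $F_p$ and all edges leaving $F_p$ at $q$ ascend. Consequently the weights of $N(F_p)|_q$, which correspond to edges leaving $F_p$, are precisely the ascending-at-$q$-but-not-in-$F_p$ ones; but I also need them to coincide with $W_q^+$, which forces $N(F_p)|_q$ to be the \emph{negative} normal bundle of $\mu$ at $q$.

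Let me restate the crux: I must show that for $q \in F_p$ the normal bundle $N(F_p)|_q$ equals the negative normal bundle $N_q^-$ of $\mu$. Equivalently, $T_q F_p = $ the positive (ascending) subspace plus the zero directions, i.e.\ $F_p$ is transverse to the flow-downs in the right way. This is exactly the statement that the stable manifold of $\mu$ at $q$ and $F_p$ (an unstable-type manifold through $q$) meet so that $T_qM = T_q F_p \oplus N_q^-$. In the index increasing case this holds because $F_p \supseteq F_q$ (every flow-up manifold through a point of $F_p$ above $p$ is contained in $F_p$, by the face structure of the polytope), and $F_q$ already has $T_q F_q = $ the ascending subspace at $q$; since $\lambda_q > \lambda_p + (\text{something})$ forces $F_p$ near $q$ to agree with $F_q$ near $q$ — here I would invoke the precise combinatorial lemma from Section~\ref{ec} that in the index increasing case the face $\mathcal{F}_p \cap \psi(M)$ contains, for each of its vertices $\psi(q)$, the face $\mathcal{F}_q \cap \psi(M)$ and has the same dimension locally, so the normal directions match. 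I expect the main obstacle to be marshalling these face-of-the-polytope and index-increasing facts cleanly — essentially, proving $\mathsf{e}_\T(N(F_p)|_q) = \ee(q)$ for \emph{all} $q \in F_p \cap M^\T$ (not just $q = p$, which is Remark~\ref{index eta 1}), which is a genuinely global statement about how the flow-up face sits in the polytope and relies essentially on the index increasing hypothesis. Once that divisibility/equality is in hand, Proposition~\ref{prop indexproperties}(ii) gives $\ii_q(\eta_p) = 1$ on $F_p \cap M^\T$ and $\ii_q(\eta_p) = 0$ off it, so $\eta_p$ satisfies Definition~\ref{definition canonical classes}, and uniqueness finishes the proof.
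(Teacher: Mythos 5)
Your plan has a genuine gap in step (b). You assert that for every $q \in F_p \cap M^\T$ one should prove $\mathsf{e}_\T\big(N(F_p)|_q\big) = \ee(q)$, i.e.\ that $N(F_p)|_q$ coincides with the full negative normal bundle $N_q^-$ of $\mu$ at $q$, so that Proposition~\ref{prop indexproperties}(ii) applies with $f=1$. This is false except at $q = p$. The bundle $N(F_p)|_q$ has complex rank $\lambda_p$ (the codimension of $F_p$), whereas $N_q^-$ has complex rank $\lambda_q$, and Lemma~\ref{lemma Fp bigger index} gives $\lambda_q > \lambda_p$ strictly for $q \in F_p^\T \setminus \{p\}$. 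So $\eta_p(q) = \mathsf{e}_\T(N(F_p)|_q)$ is a product of only $\lambda_p$ factors $(1-e^{2\pi\il w_j})$, not $\lambda_q$, and is not a multiple of $\ee(q)$. Consequently Proposition~\ref{prop indexproperties}(ii) simply does not apply, and the reduction you hoped for collapses. What Corollary~\ref{edges in} actually gives is the containment $N(F_p)|_q \subset N_q^-$, which goes the wrong way for your purposes.

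The paper's proof handles exactly this discrepancy: it pushes $\eta_p$ down to $\widetilde H_q^\epsilon \cong \C\pp^{\lambda_q}$ and observes that the resulting class has support on a sub-projective space $\C\pp^{\lambda_q - \lambda_p}$ (the fixed points where it is nonzero), where it restricts to the equivariant Euler class of the normal bundle of that subspace. In other words, $\kappa(\widetilde r(\eta_p))$ is the equivariant Poincar\'e dual to $\C\pp^{\lambda_q-\lambda_p} \subset \C\pp^{\lambda_q}$, and by the Atiyah--Segal formula its index equals $\ind(\mathbf{1}_{\C\pp^{\lambda_q - \lambda_p}}) = 1$. This genuinely uses the shape of the class on the whole reduced space, not just a divisibility check at the single point $q$. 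To repair your argument you would need to carry out (or cite) this index computation; the shortcut through Proposition~\ref{prop indexproperties}(ii) is not available.
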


This proves Proposition \ref{existence canonical} in the index increasing case.
\begin{proof}
We need to show that 
\begin{displaymath} \ii_q (\eta_p)=
\begin{cases} 0\ & for\ q \in M^{\T}\setminus F_p\\
1\ & for\ q \in  F_p\cap M^\T.
\end{cases}
\end{displaymath}
Consider any two fixed points $p,q$. Suppose first that $q \notin F_p$. Then $\eta_p(q)=0$, and therefore the local index at $q$ is $0$ (see Propostion \ref{prop indexproperties}).
Now suppose that $q \in F_p$. By Lemma \ref{lemma Fp bigger index} we have $\lambda_q > \lambda_p$.
In the GKM graph of $M$ the vertex $q$ is connected to $\lambda_q$ vertices $q_1,\ldots,q_{\lambda_q}$ by edges terminating at $q$, and to $n-\lambda_q$ vertices, $q_{\lambda_q+1},\ldots,q_n$ by edges starting at $q$.  As the moment map is index increasing, 
from Corollary \ref{edges in} it follows that
the vertices $q_{\lambda_q+1},\ldots,q_n$ are also in $F_p$, as $q$ is.
Therefore $\lambda_p$ of the points $q_1,\ldots,q_{\lambda_q}$ are not in $F_p$ (see also Corollary \ref{edges in}). To simplify the notation assume that $q_1,\ldots,q_{\lambda_p}$ are not in $F_p$. The value of $\eta_p$ at these points is $0$. Let $w_1,\ldots,w_{\lambda_q}$ be the weights of $\T$-action on the tangent spaces at $q$ of the spheres corresponding to the edges $\overrightarrow{q_1\,q},\ldots, \overrightarrow{q_{\lambda_q}\,q}$. Then by \eqref{eeKc normal}  the value of $\eta_p$ at $q$ is given by
$$\eta_p(q)=\prod_{j=1}^{\lambda_p}\,(1-\e{w_j}).$$
To calculate the local index of $\eta_p$ at $q$ we look at the class $\kappa(\widetilde{r}(\eta_p))$ in $\mathcal{K}_{\widetilde{\T}_{q}}(\widetilde{H}^{\epsilon}_q)$. Using the algorithm and notation from Section \ref{section local index} we find the values of this class at the fixed points of $\widetilde{H}^{\epsilon}_q$:
\begin{align*}
\kappa(\widetilde{r}(\eta_p)) (p_l)&=\prod_{j=1}^{\lambda_p}\,(1-\e{(w_j-w_l)})=0,&l=1,\ldots,\lambda_p,\\
\kappa(\widetilde{r}(\eta_p)) (p_l)&=\prod_{j=1}^{\lambda_p}\,(1-\e{(w_j-w_l)})\neq 0,&l=\lambda_p+1,\ldots,\lambda_q,\\
\kappa(\widetilde{r}(\eta_p)) (p_0)&=\prod_{j=1}^{\lambda_p}\,(1-\e{(w_j+w_0)})\neq 0. &
\end{align*}

Note that at the points $p_l$ where the value of $\kappa(\widetilde{r}(\eta_p))$ is nonzero, it is exactly equal to the product of terms $(1-\e{w})$ taken over the weights $w$ on the edges connecting $p_s$ to fixed points in $(\widetilde{H}^{\epsilon}_q)\cong \C \pp^{\lambda_q}$ where the value of $\kappa(\widetilde{r}(\eta_p))$ is $0$. This observation, and Atiyah-Segal formula \eqref{AS index}, imply that the index of $\kappa(\widetilde{r}(\eta_p))$ is equal to the index of a class $\mathbf{1}$ on $\C \pp^{\lambda_q-\lambda_p}\subset \C \pp^{\lambda_q}$ containing the fixed points of $ \C \pp^{\lambda_q}$ where $\kappa(\widetilde{r}(\eta_p))$ is nonzero. By \eqref{index 1-2} we have that $ \textrm{Ind}(\mathbf{1}_{\C \pp^{\lambda_q-\lambda_p}})= 1$.
 In other words, $\textrm{Ind}( \kappa (\eta_p))$ is equal to
\begin{displaymath}
 \begin{aligned}
&\frac{\displaystyle\prod_{j=1}^{{\lambda_p}}(1-\e{(w_j+w_0)})}
{\displaystyle\prod_{j=1}^{{\lambda_q}}(1-\e{(w_j+w_0)})}
+0+\sum_{l={\lambda_p}+1}^{\lambda_q} \,\frac{\displaystyle\prod_{j=1}^{{\lambda_p}}(1-\e{(w_j-w_l)})}{(1-\e{(-w_l-w_0)})\displaystyle\prod_{j=1,j\neq l}^{{\lambda_q}}(1-\e{(w_j-w_l)})}=\\
& \frac{1}{\displaystyle\prod_{j={\lambda_p}+1}^{{\lambda_q}}(1-\e{(w_j+w_0)})}+\sum_{s={\lambda_p}+1}^{\lambda_q} \,\frac{1}{(1-\e{(-w_l-w_0)})
\displaystyle\prod_{j={\lambda_p}+1,j\neq l}^{{\lambda_q}}(1-\e{(w_j-w_l)})}= \\
& \textrm{Ind}(\mathbf{1}_{\C \pp^{\lambda_q-\lambda_p}})= 1.
\end{aligned}
\end{displaymath}
This shows that the classes defined naturally as the equivariant Poincar\'e duals to the flow-up manifolds $F_p$ are i-canonical classes.
\end{proof}

\begin{exm}[The complex projective space $\mathbb{C}\mathbb{P}^n$.]\label{projective space}

Consider the complex projective space $(\mathbb{C}\mathbb{P}^n,\omega)$ where $\omega$ denotes the Fubini-Study symplectic form rescaled so that $[\omega]$ is integral and primitive, i.e.\ $[\omega]$ is a generator of $H^2(\mathbb{C}\mathbb{P}^n;\Z)=\Z$, regarded as a sublattice of
$H^2(\mathbb{C}\mathbb{P}^n;\R)$. Endow $(\mathbb{C}\mathbb{P}^n,\omega)$ with the standard toric action of an $n$-dimensional torus $\T$ and moment map $\psi$; as before, let $\mathfrak{t}$ be the Lie algebra of $\T$ and $\ell\subset \mathfrak{t}$ the integral lattice. Since the action is Hamiltonian, the symplectic form extends to an equivariant form in the Cartan complex, called equivariant symplectic form, given by
$\omega+\psi$. We can choose the moment map so that $\psi(q)\in \ell^*$ for every
$q\in (\mathbb{C}\mathbb{P}^n)^\T$. Then the above equivariant form
represents a well-defined element $[\omega+\psi]$ in $H^2_{\T}(\C\pp^n;\Z)$ (regarded as a sublattice of $H^2_\T(\C\pp^n;\R)$). At the level of the GKM graph $(V,E)$ associated
to $(\mathbb{C}\mathbb{P}^n,\omega,\T,\psi)$, the condition of $[\omega]$ being integral and primitive translates into saying that
\begin{equation}\label{all 1}
\psi(q)-\psi(p)=w(\overrightarrow{p\,q})\quad\mbox{for every}\quad \overrightarrow{p\,q}\in E.
\end{equation}
Indeed, this is equivalent to saying that the symplectic volume of the sphere $\psi^{-1}(\overrightarrow{p\,q})$ is $1$, for every $\overrightarrow{p\,q}\in E$. Note that
for every pair of fixed points $p,q\in (\C\pp^n)^\T$ there exists an edge $\overrightarrow{p\,q}\in E$.

Consider the equivariant K-theory class represented by the equivariant line bundle $\mathbb{L}^{S^1}$ satisfying $\mathrm{c}_{1}^{S^1}(\mathbb{L}^{S^1})=-[\omega+\psi]$, where $\mathrm{c}_1^{S^1}(\mathbb{L}^{S^1})$ denotes the equivariant first Chern class of $\mathbb{L}^{S^1}$.
Such bundle exists by Theorem 1.1 and Corollary 1.2 proved by Hattori and Yoshida in \cite{HY}. 
Note that $\mathbb{L}^{S^1}(s)=e^{-2\pi \mathrm{i}\psi(s)}$ for every $s\in (\C\pp^n)^\T$.
Let $\overline{\xi}\in \mathfrak{t}$ be a generic vector, and consider the ordering induced by $\mu=\psi^{\overline{\xi}}$ on the fixed points.
Note that for every choice of generic vector $\overline{\xi}$, the oriented GKM graph associated to $(\mathbb{C}\mathbb{P}^n,\omega,\T,\psi,\overline{\xi})$ is index increasing.

For every $p\in M^\T$ consider the class
$$
\tau_p=\prod_{\stackrel{q\in M^\T}{q\prec p}} \big(1-e^{2\pi \mathrm{i}\psi(q)}\mathbb{L}^{S^1}\big).
$$
Observe that $q\prec p$ if and only if $q \notin F_p$ (see also Proposition \ref{proposition fp is vpplus}). Therefore, for each $s\in (\C\pp^n)^\T$, we have that 
$$\tau_p(s)=
\begin{cases}
\displaystyle 0 & \mbox{if} \quad s\notin F_p \\
\displaystyle\prod_{\stackrel{q\in M^\T}{q\prec p}} \big(1-e^{2\pi \mathrm{i}(\psi(q)-\psi(s))}\big)=
\prod_{\stackrel{q\in M^\T}{q\notin F_p}} \big(1-e^{2\pi \mathrm{i}w(\overrightarrow{qs})}\big) & \mbox{if}\quad s\in F_p.
\end{cases}
$$
Thus $\tau_p$ coincides with the equivariant (K-theoretical) Poincar\'e dual to $F_p$ which, by 
Proposition \ref{classes from cohomology are canonical}, is the i-canonical class at $p$.
\end{exm}
The i-canonical classes for $\C\pp^2$ are shown in
Figure \ref{fig:cp2classes}. 
\begin{figure}[htbp]
\begin{center}
\psfrag{0}{\color{blue}$0$}
\psfrag{1}{\color{blue}$1$}
\psfrag{t0}{$\tau_0$}
\psfrag{t1}{$\tau_1$}
\psfrag{t2}{$\tau_2$}
\psfrag{ex}{\color{blue}{$1-e^{2 \pi i x}$}}
\psfrag{ey}{\color{blue}{$1-e^{2 \pi i y}$}}
\psfrag{exy}{\color{blue}{$(1-e^{2 \pi i (y-x)})(1-e^{2 \pi i y})$}}
\psfrag{x}{\tiny{$x$}}
\psfrag{y}{\tiny{$y$}}
\psfrag{y-x}{\tiny{$y-x$}}
\includegraphics[width=10cm]{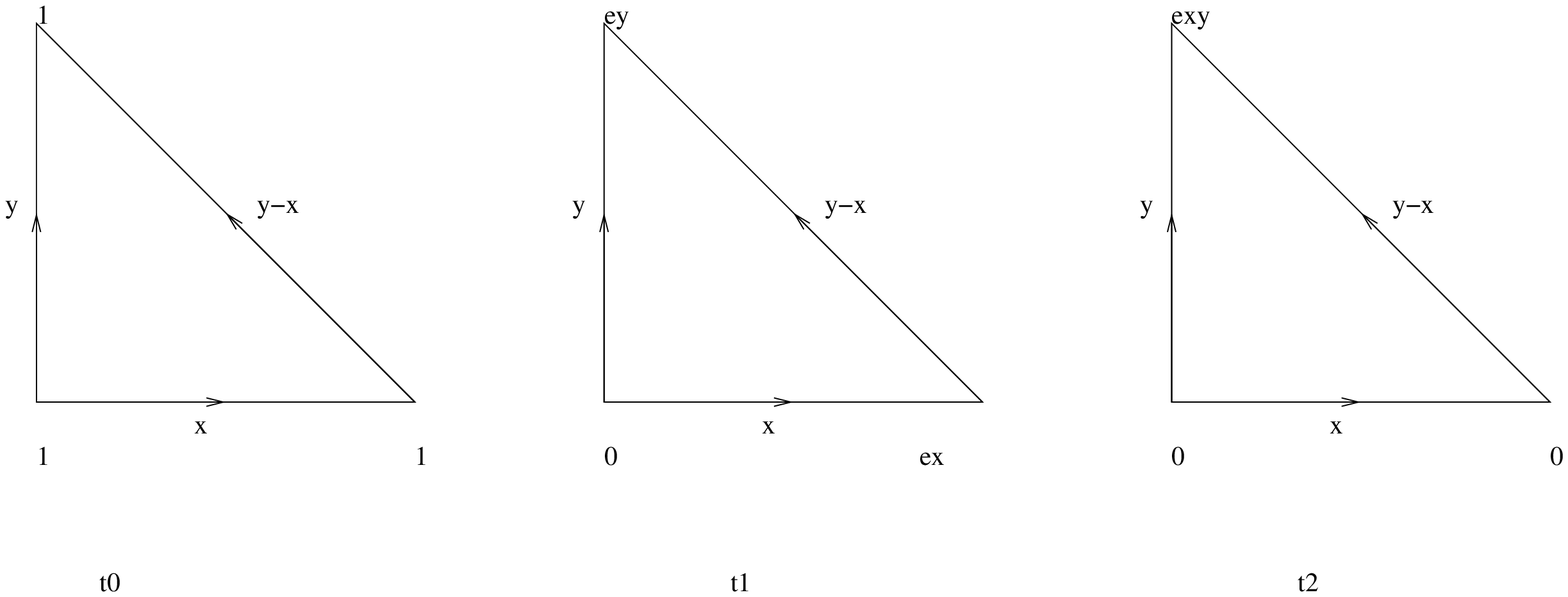}
\caption{The basis of i-canonical classes $\{\tau_p\}$ for $\C \pp^2$.}
\label{fig:cp2classes}
\end{center}
\end{figure}

\subsection{Toric manifolds: the non-index increasing case}\label{toric non-increasing}
 
In the case where the moment map is not index increasing we can still associate to each fixed point $p \in M^\T$ a K-theory class $\eta_p$, as in Lemma \ref{def kirwanclasses}. The value of $\eta_p$ on points $q \in M^\T \setminus F_p$ is zero and therefore the local index $\ii_q(\eta_p)$ is also zero (see Propostion \ref{prop indexproperties}). However it may no longer be true that $\ii_q(\eta_p)=1$ for $q \in F_p\cap M^\T=: F_p^\T$. 
Below we present an algorithm to construct i-canonical classes $\tau_p$ out of the equivariant Poincar\'e duals $\eta_p$ as in Definition \ref{def kirwanclasses}. 
\begin{defin}\label{def vpplus}
For any $p \in M^\T$ we define $V_p^+$ to be the set of fixed points which can be joined to $p$ through an increasing path in $E^o$, i.e. $q\in V_p^+$ if and only if there exists
a sequence of edges $\gamma=(r_0=p,r_1,\ldots,r_m=q)$ such that $\overrightarrow{r_{i}r_{i+1}}\in E^o$ for every $i=0,\ldots,m-1$. 
\end{defin}
This definition implies that $\mu (q) > \mu (p)$, i.e. $p \prec q$, for all $q \in V_p^+ \setminus \{p\}$.
Also observe that (by Lemma \ref{one inclusion}) $F_p^\T \subset V_p^+$ and thus for all $q \in F_p^\T$ we have that $F_q^\T \subseteq V_q^+ \subset V_p^+$. (In the index increasing case a stronger statement is true: $F_q \subseteq F_p$ for all $q \in F_p^\T$, see Corollary \ref{edges in}). 
\begin{proof}[Proof of Proposition \ref{existence canonical} - the non-index increasing case]
Fix $p \in M^\T$ and let  $V_p^+= \{q_0=p,q_1,\ldots,q_k\}$ be ordered so that  $q_j \prec q_l$ for $0\leq j<l\leq k$. 
As the restriction of $\mu$ to $F_{q_l}$ attains its minimum at $q_l$, we have that $$j<l \Rightarrow q_j \notin F_{q_l}  \textrm{  and therefore  } \eta_{q_l}(q_j)=0.$$ 
We inductively construct auxiliary classes $a_1,\ldots,a_k$ satisfying
\begin{itemize}
\item $\ii_{q_j}(a_l)=1$ if $j\leq l$ and $ q_j \in F_p^\T$; \\
\item $\ii_{q_j}(a_l)=0$ if $j\leq l$ and $ q_j \in V_p^+\setminus F_p^\T$;\\
\item $\ii_{q}(a_l)=0$ if $ q \notin V_p^+$. 
\end{itemize}
and define the i-canonical class $\tau_p$ to be $a_k$. 
In the following we will make use of the fact that
$F_p^\T\subset V_p^+$.

Define
\begin{displaymath}
a_1:= \begin{cases} \eta_p + (1-\ii_{q_1}(\eta_p))\ \eta_{q_1} & \textrm{ if }q_1 \in F_p^\T,\\
\eta_p -\ii_{q_1}(\eta_p)\ \eta_{q_1} & \textrm{ if }q_1 \in V_p^+ \setminus F_p^\T.\\
                 \end{cases}
\end{displaymath}
Then, as $\eta_{q_1}(p)=0$, we have $a_1(p)= \eta_p(p)=\ee(p)$, and hence  $\ii_p(a_1)=\ii_p(\eta_p)=1$ (see Propostion \ref{prop indexproperties} and Remark \ref{index eta 1}). 
Also, observe that if $\eta_{q_1}(s)\neq 0$ for some $s\in M^\T$ then $s\in F_{q_1}^\T \subset V_{q_1}^+\subset V_p^+$, where the first inclusion follows from Lemma \ref{one inclusion}, and the second is obvious.
Thus the class $a_1$ restricts to zero on $M^\T \setminus V_p^+$ and $\ii_{s}(a_1)=0$ if $s\notin V_p^+$.
Moreover, by Proposition \ref{prop indexproperties} and Remark \ref{index eta 1} we can conclude that
\begin{displaymath}
\ii_{q_1}(a_1):= \begin{cases} \ii_{q_1}(\eta_p)+(1-\ii_{q_1}(\eta_p))\cdot 1=1 & \textrm{ if }q_1 \in F_p^\T,\\
\ii_{q_1}(\eta_p)-\ii_{q_1}(\eta_p)\cdot 1=0 & \textrm{ if }q_1 \in V_p^+ \setminus F_p^\T.
                 \end{cases}
\end{displaymath}
Then we proceed inductively and define
\begin{displaymath}
 a_j= \begin{cases} a_{j-1}+ (1- \ii_{q_j}(a_{j-1}))\eta_{q_{j}} & \textrm{ if }q_j \in F_p^\T,\\
       a_{j-1}- \ii_{q_j}(a_{j-1})\eta_{q_j}& \textrm{ if }q_j \in V_p^+ \setminus F_p^\T.
      \end{cases}
\end{displaymath}
As the fixed points are ordered with $\prec$, the restrictions of $\eta_{q_j}$ to fixed points $q_0,q_1,\ldots, q_{j-1}$ are zero. Thus the local index of $a_j$ at $q_l$, $l=0,\ldots,j-1$ is the same as of $a_{j-1}$. Similarly as before, 
 Remark \ref{index eta 1} and Proposition \ref{prop indexproperties} prove that $\ii_{q_j}(a_j) =1$ if $q_j \in F_p^\T$ and is zero on $V_p^+\setminus F_p^\T$. 
Moreover $a_j$ restricts to zero on $M^\T \setminus V_p^+$ and $\ii_{s}(a_j)=0$ if $s\notin V_p^+$.

The algorithm ends when we exhaust all the points in $V_p^+= \{q_0,q_1,\ldots,q_k\}$ and we define the class $\tau_p$ to be $a_k$. Thus $\ii_q(\tau_p)=1$ if $q \in F_p^\T$ and  $\ii_q(\tau_p)=0$ if $q \notin F_p^\T$ as needed. What is left to prove is that the classes $\tau_p$ are Kirwan classes in the sense of Proposition \ref{existence kirwan}. This follows immediately from observing that 
$\tau_p=\eta_p+\displaystyle\sum_{q_l\in V_p^+\setminus\{p\}}\alpha_l\,\eta_{q_l}$, where $\alpha_l\in R(\T)$ for every $l$, and from the $\eta_{q_l}$'s also being Kirwan classes.
\end{proof}
\begin{rmk} \label{rmk basis}
Note that the collection of i-canonical classes $\{\tau_p\}_{p\in M^\T}$ is obtained from the basis of Kirwan classes $\{\eta_p\}_{p\in M^\T}$ by applying a lower triangular matrix with $1$'s on diagonal.
This gives an alternative proof that they form a basis of $\K$.
\end{rmk}
Figure \ref{fig:hirzebruchclasses} presents the basis of i-canonical classes for the Hirzebruch surface. 
\begin{figure}[htbp]
\begin{center}
\psfrag{0}{\color{blue}{\footnotesize{$0$}}}
\psfrag{1}{\color{blue}\footnotesize{$1$}}
\psfrag{t0}{\footnotesize{$\tau_0$}}
\psfrag{t1}{\footnotesize{$\tau_1$}}
\psfrag{t2}{\footnotesize{$\tau_2$}}
\psfrag{t3}{\footnotesize{$\tau_3$}}
\psfrag{expy}{\color{blue}\footnotesize{$1-e^{2 \pi i (x+y)}$}}
\psfrag{exmy}{\color{blue}\footnotesize{$(1-e^{2 \pi i (x-y)})e^{2 \pi i y}$}}
\psfrag{ey}{\color{blue}\footnotesize{$1-e^{2 \pi i y}$}}
\psfrag{eyyx}{\color{blue}\footnotesize{$(1-e^{2 \pi i y})e^{2 \pi i (y-x)}$}}
\psfrag{eyymx}{\color{blue}\footnotesize{$(1-e^{2 \pi i y})(1-e^{2 \pi i (y-x)})$}}
\psfrag{y}{\tiny{$y$}}
\psfrag{y-x}{\tiny{$y-x$}}
\psfrag{x+y}{\tiny{$x+y$}}
\includegraphics[width=14cm]{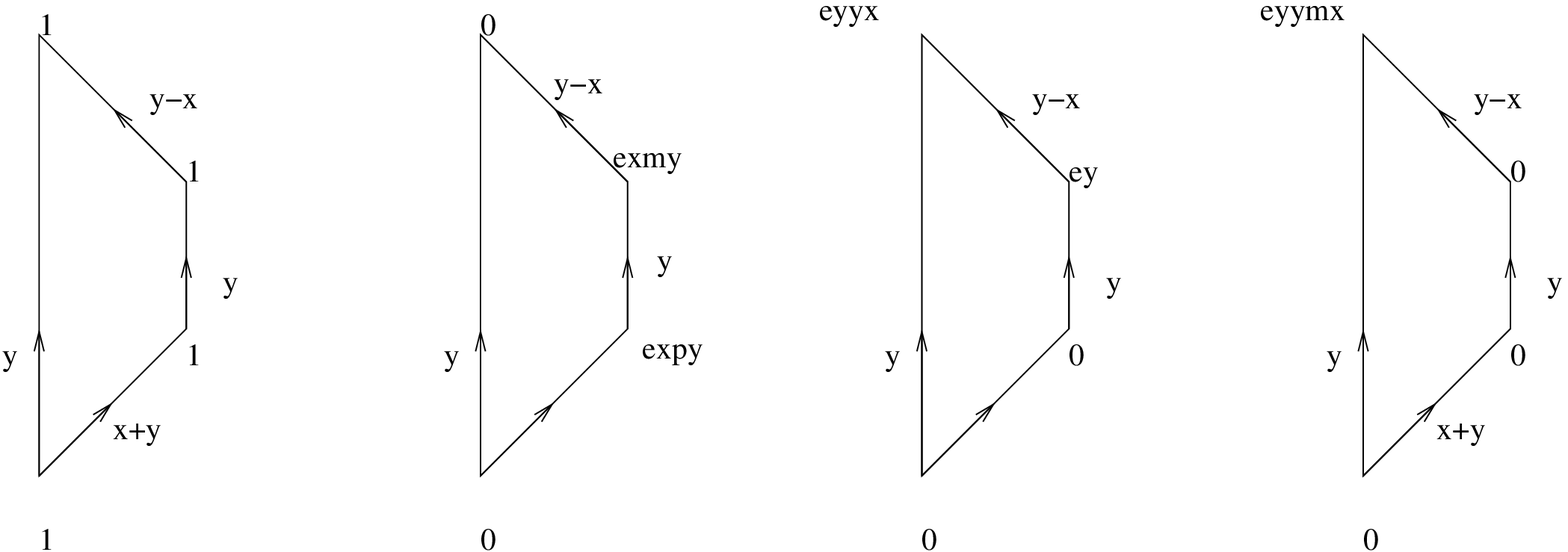}
\caption{The basis of i-canonical classes $\{\tau_p\}$ for the Hirzebruch surface.}
\label{fig:hirzebruchclasses}
\end{center}
\end{figure}

\section{Applications to equivariant cohomology}\label{ec}
As it was already observed by Guillemin and Kogan, it is possible to use the local index map to define ``canonical classes"
forming a basis of the equivariant cohomology ring for every Hamiltonian $\T$-space, in particular for symplectic toric manifolds (see \cite[Remark 1.4]{GK}). We follow their idea, but use a different definition of local index. Moreover we relate our basis to the bases already commonly used in equivariant cohomology. 
 Namely the basis thus obtained coincides with the basis of the equivariant Poincar\'e duals $\{\eta_p\}$ defined below. 
In the addition, when the generic component of the moment map is the index increasing, Goldin and Tolman \cite{GT} introuduced another basis for the equivariant cohomology (see Definition \ref{definition gtclasses}).
In this case we show that the three sets of bases - the i-canonical classes, the equivariant Poincar\'e duals $\{\eta_p\}$, and the Goldin-Tolman \cite{GT} canonical classes -  are the same.

Our definition of i-canonical classes for equivariant cohomology is slightly different from the definition given in K-theory. Namely, here we require the local index of a class associated to a fixed point $p$ to vanish on all $M^\T \setminus \{p\}$. The reason for this change is that we would like the class $\mathbf{1}_M$ to be an element of our i-canonical basis.

Recall that in the Borel description, the equivariant cohomology ring $H_\T^*(M;\Z)$ is defined to be the ordinary cohomology ring of the space $M\times_{\T} E\T$, where $E\T$ is a contractible space on which $\T$ acts freely. If $\T$ is a $d$-dimensional torus, and $x_1,\ldots,x_d$ a $\mathbb{Z}$-basis of the dual lattice of $\T$, then
$H_\T^*(\p;\Z)=H^*((\C\mathbb{P}^{\infty})^d;\Z)=\Z[x_1,\ldots,x_d]$. The unique map $M\to \p$ gives $H_\T^*(M;\Z)$ the structure of an $H_\T^*(\p;\Z)$-module.

Let $(M,\omega,\T,\psi)$ be a GKM space (see Section \ref{GKM section}) with GKM graph $(V,E)$. For an edge $\overrightarrow{p\,q}\in E$,
let $\T'=\T'_{\overrightarrow{p\,q}}=\exp\{\xi \in \mathfrak{t}\mid w(\overrightarrow{p\,q})(\xi)=0\}$ be the subtorus fixing the sphere $S^2_{\overrightarrow{p\,q}}=\psi^{-1}(\overrightarrow{p\,q})$ corresponding to the edge $\overrightarrow{p\,q}$. 
Let $\mathbb{S}(\mathfrak{t}^*)$ be the symmetric algebra of $\mathfrak{t}^*$ and $\pi_{\overrightarrow{p\,q}}\colon \mathbb{S}(\mathfrak{t}^*)\to \mathbb{S}((\mathfrak{t}')^*)$ be the projection induced by the inclusion of $\mathfrak{t}'=Lie(\T')$ into $\mathfrak{t}$.
Then for any class $\tau\in H_\T^*(M;\Z)$ we must have 
\begin{equation}\label{GKM cd eq}
\pi_{\overrightarrow{p\,q}}(\tau(q))=\pi_{\overrightarrow{p\,q}}(\tau(p))\quad\mbox{for every}\quad \overrightarrow{p\,q}\in E.
\end{equation}
This condition is necessary, but not sufficient to guarantee that a class $\tau\in H_\T^*(M^\T;\Z)$ is in $H_\T^*(M;\Z)$. However, if working with rational coefficients, 
a theorem of Goresky-Kottwitz-MacPherson (\cite{GKM}) implies that
each $\tau$ in $H_\T^*(M^\T;\Q)$ satisfying \eqref{GKM cd eq} is in $H_\T^*(M;\Q)$. (Compare with Theorem \ref{GKM k}.)

\subsection{Choosing a basis.}
We start by recalling the choices of bases already used in the literature: the basis consisting of equivariant Poincar\'e duals to flow up manifolds and the Goldin-Tolman canonical classes in the index increasing case.

Let $(M,\omega,\T,\psi)$ be a Hamiltonian $\T$-space, and let $\mu\colon M\to\R$ be a generic component of the moment map.
For a fixed point $p$, the {\bf equivariant (cohomological) Euler class} of the negative normal bundle $N_p^-$ of $\mu$ at $p$ is the element 
$\Lambda_p^-\in H^{2\lambda_p}(\{p\};\Z)$ given by 
\begin{equation}\label{ennb}
\Lambda_p^-=\prod_{w_j\in W_p^+}w_j\,.
\end{equation}
The following Proposition is due to Kirwan. 
\begin{prop}[Kirwan]\label{existence kirwan cohom}
Let $(M,\omega,\T,\psi)$ be a Hamiltonian $\T$-space, and let $\mu\colon M\to\R$ be a generic component of the moment map. 
Then for every $p\in M^\T$ there exists a class $\nu_p\in H_\T^{2\lambda_p}(M;\Z)$ such that 
\begin{itemize}
\item[(i)] $\nu_p(p)=\Lambda_p^-$;
\item[(ii)] $\nu_p(q)=0$ for every $q\in M^\T$ such that $q \prec p$ (i.e. $\mu(q)< \mu(p)$).
\end{itemize}
Moreover the set $\{\nu_p\}_{p\in M^\T}$ is a basis for $H_\T^*(M;\Z)$ as a module over $H_\T^*(\p;\Z)$.
\end{prop}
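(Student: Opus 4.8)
The plan is to transcribe, \emph{mutatis mutandis}, the proof of Proposition~\ref{existence kirwan}, replacing equivariant K-theory by equivariant cohomology and the K-theoretic Euler class $\ee(p)$ by the cohomological Euler class $\Lambda_p^-$ of~\eqref{ennb}. First I would record the Morse-theoretic input exactly as there: for $\varepsilon>0$ small, set $M_p^\pm=\{q\in M\mid \mu(q)\le\mu(p)\pm\varepsilon\}$, so that $(M_p^+,M_p^-)$ is equivariantly homotopy equivalent to the pair $(D^{2\lambda_p},\partial D^{2\lambda_p})$ attached to the negative normal bundle $N_p^-$ at $p$. Since $N_p^-$ is a complex, hence $\T$-equivariantly oriented, vector bundle, the equivariant Thom isomorphism with $\Z$-coefficients yields $\mathcal{T}_p\colon H_\T^{*-2\lambda_p}(\{p\};\Z)\xrightarrow{\sim}H_\T^*(M_p^+,M_p^-;\Z)$, together with the commuting square
\begin{equation*}
\xymatrix{
H_\T^*(M_p^+,M_p^-;\Z) \ar[r]^-{\alpha_p} & H_\T^*(M_p^+;\Z)\ar[d]_{\iota_p^*}\\
H_\T^{*-2\lambda_p}(\{p\};\Z)\ar[u]_{\mathcal{T}_p} \ar[r] & H_\T^*(\{p\};\Z) \\
}
\end{equation*}
in which $\alpha_p$ is the map in the long exact sequence of the pair, $\iota_p^*$ is the restriction, and the bottom arrow is multiplication by $\Lambda_p^-=\prod_{w_j\in W_p^+}w_j$. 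Hence $\iota_p^*\circ\alpha_p\circ\mathcal{T}_p$ is multiplication by $\Lambda_p^-$.

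The next step is where the cohomological argument is in fact simpler than its K-theoretic analogue: the ring $H_\T^*(\p;\Z)=\Z[x_1,\dots,x_d]$ is an integral domain, and $\Lambda_p^-$, being a product of the nonzero elements $w_j\in\ell^*\subset\mathfrak{t}^*$, is a nonzero non-zero-divisor. Therefore $\alpha_p$ is injective, so the long exact sequence of $(M_p^+,M_p^-)$ splits into short exact sequences
\begin{equation*}
0\to H_\T^*(M_p^+,M_p^-;\Z)\xrightarrow{\alpha_p}H_\T^*(M_p^+;\Z)\xrightarrow{\beta_p}H_\T^*(M_p^-;\Z)\to 0,
\end{equation*}
with $\beta_p$ the restriction. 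I would then invoke the cohomological counterpart of Theorem~\ref{facts K theory}: the restrictions $H_\T^*(M_p^\pm;\Z)\to H_\T^*(M_p^\pm\cap M^\T;\Z)$ and $H_\T^*(M;\Z)\to H_\T^*(M^\T;\Z)$ are injective, and $H_\T^*(M;\Z)\to H_\T^*(M_p^+;\Z)$ is surjective; these follow by induction on the fixed points in the order $\prec$ from the short exact sequences just obtained, exactly as in the proof of Theorem~\ref{facts K theory}, and are classical — the Morse stratification of $\mu$ is equivariantly perfect because the negative normal bundles are complex (see \cite{Ki}).

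With these tools in hand the construction proceeds as in Proposition~\ref{existence kirwan}: a class $\nu\in H_\T^*(M_p^+;\Z)$ lies in $\ker\beta_p=\operatorname{im}\alpha_p$ if and only if $\nu(q)=0$ for all $q\in M_p^-\cap M^\T$ and $\nu(p)=f\,\Lambda_p^-$ for some $f\in H_\T^*(\p;\Z)$, and by injectivity of the restriction to the fixed points of $M_p^+$ such a $\nu$ is uniquely determined once $f$ is chosen. Taking $f=1$ gives $\nu=\alpha_p(\mathcal{T}_p(1))\in H_\T^{2\lambda_p}(M_p^+;\Z)$ with $\nu(p)=\Lambda_p^-$ and $\nu(q)=0$ for $q\in M_p^-\cap M^\T$; extending $\nu$ to $M$ through the surjection $H_\T^*(M;\Z)\to H_\T^*(M_p^+;\Z)$ yields $\nu_p\in H_\T^{2\lambda_p}(M;\Z)$. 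Properties (i) and (ii) are then immediate, since every $q\in M^\T$ with $\mu(q)<\mu(p)$ lies in $M_p^-\cap M^\T$.

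Finally, to see that $\{\nu_p\}_{p\in M^\T}$ is a basis one repeats the linear-algebra argument of Proposition~\ref{existence kirwan}. For generation: given $\gamma\in H_\T^*(M;\Z)$, let $q_0$ be the $\prec$-first fixed point with $\gamma(q_0)\ne 0$; since $\gamma|_{M_{q_0}^-}=0$, the restriction of $\gamma$ to $M_{q_0}^+$ lies in $\operatorname{im}\alpha_{q_0}$, so $\gamma(q_0)=f_0\,\Lambda_{q_0}^-=f_0\,\nu_{q_0}(q_0)$ for some $f_0\in H_\T^*(\p;\Z)$; the class $\gamma-f_0\nu_{q_0}$ then vanishes at all fixed points $\preceq q_0$, and iterating (the fixed set being finite) together with injectivity of $H_\T^*(M;\Z)\to H_\T^*(M^\T;\Z)$ shows that the $\nu_p$ span. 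For independence: if $\sum_j c_j\nu_{p_{i_j}}=0$ with all $c_j\ne 0$ and $i_1<\dots<i_s$, then evaluation at $p_{i_1}$ kills every term but the first by property (ii), giving $c_1\,\Lambda_{p_{i_1}}^-=0$, contradicting that $\Lambda_{p_{i_1}}^-$ is a non-zero-divisor. The only genuine obstacle is establishing the cohomological analogue of Theorem~\ref{facts K theory}, i.e.\ the equivariant perfectness of the Morse stratification and the resulting injectivity/surjectivity of the restriction maps; everything downstream transcribes directly and is, if anything, cleaner, since one works over the domain $\Z[x_1,\dots,x_d]$.
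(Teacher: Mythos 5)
Your proposal is correct and takes essentially the same route the paper has in mind: the paper in fact omits the proof of Proposition~\ref{existence kirwan cohom}, remarking only that it mirrors that of Proposition~\ref{existence kirwan} and hinges on $\Lambda_p^-$ not being a zero divisor in $H_\T^*(\p;\Z)$, which is exactly the pivot of your argument. Your transcription of the Morse-theoretic short exact sequences, the identification of $\ker\beta_p$, and the spanning/independence argument all match the intended cohomological analogue of the K-theoretic proof.
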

An equivariant cohomology class satisfying properties (i) and (ii) above is called a {\bf Kirwan class} (at the fixed point $p$). 

Note that Proposition \ref{existence kirwan} is a generalization of the above original result of Kirwan from the equivariant cohomology setting to the K-theory setting. Due to this similarity we omit the proof of Proposition \ref{existence kirwan cohom}, which is based on the fact that $\Lambda_p^-$ is not a zero divisor in $H_\T(\p;\Z)$.

Henceforth $(M,\omega,\T,\psi,\overline{\xi})$ denotes a symplectic toric manifold endowed with a choice of generic $\overline{\xi}\in \mathfrak{t}$.
In analogy with what we did in Section \ref{section construction} for equivariant K-theory, we
define the equivariant (cohomological) Poincar\'e duals to the flow-up manifolds $F_p$.  
Let $F_p$ be the flow-up manifold at the fixed point $p$, and $N(F_p)$ the normal bundle of $F_p$ in $M$. Then
 the equivariant (cohomological) Euler class of the normal bundle $N(F_p)$ of $F_p$ in $M$ at a fixed point $q$ is
an element in $H^{2\lambda_p}(\{q\};\Z)$  given by
\begin{equation}\label{eec normal}
\chi_\T\big( N(F_p)|_q\big)=\prod_{
\begin{array}{l}
r\in M^\T \setminus F_p\\
\overrightarrow{r\,q}\in E
\end{array}
}
w(\overrightarrow{r\,q}).
\end{equation}
In particular, since $N(F_p)|_p=N_p^-$, we have that $\chi_\T\big( N(F_p)|_p\big)=\Lambda_p^-$.

\begin{defin}\label{eqcoh poincare duals}
Let  $(M,\omega,\T, \psi)$ be a symplectic toric manifold, and let $\mu=\psi^{\overline{\xi}}\colon M\to \R$ be a generic component of the moment map. 
For each $p\in M^\T$ define the {\bf equivariant Poincar\'e dual} to the flow-up manifold $F_p$ to be the class
$\eta_p\in H_\T^{2\lambda_p}(M;\Z)$ whose restriction to the fixed points is given by
\begin{displaymath} \eta_p(q)=
\begin{cases} 0\ & for\ q \in M^{\T}\setminus F_p\\
\chi_\T\big( N(F_p)|_q\big)\ & for\ q \in  F_p\cap M^\T.
\end{cases}
\end{displaymath}
\end{defin}
It is easy to check that $\eta_p$ is a Kirwan class in the sense of Proposition \ref{existence kirwan cohom} for every $p\in M^\T$, and thus $\{\eta_p\}_{p \in M^\T}$ forms a basis of $H_\T^*(M;\Z)$ as a module over $H^*_\T(\p;\Z)$.

Goldin and Tolman \cite{GT} define 
another basis for the equivariant cohomology ring of a symplectic manifold endowed with a Hamiltonian torus action. 
\begin{defin} \label{definition gtclasses}
 Let $(M,\omega,\T,\psi)$ be a Hamiltonian $\T$-space, and let $\mu\colon M\to\R$ be a generic component of the moment map. 
 A cohomology class $\zeta_p \in H_\T^{2\lambda_p}(M; \Z)$ is a canonical class in the sense of Goldin and Tolman (henceforth referred to as {\bf GT-canonical class}) at $p\in M^\T$  if
 \begin{itemize}
\item[(i)] $\zeta_p(p)=\Lambda_p^-$;
\item[(ii)] $\zeta_p(q) =0$ for all $q \in M^\T \setminus \{p\}$ such that $\lambda_q \leq \lambda_p$.
\end{itemize}
\end{defin}
GT-canonical classes do not always exist; however, if they exist, they are uniquely associated to the chosen component of the moment map $\mu$ (see \cite[Lemma 2.7]{GT}). 
Moreover GT-canonical classes are Kirwan classes in the sense of Proposition \ref{existence kirwan cohom} (cf. \cite[Lemma 2.8]{GT}), hence if they exist for every fixed point $p$, they form a basis of $H^*_\T(M;\Z)$ as a module over $H_\T^*(\p;\Z)$.

If the $\T$ action above is GKM, and if the chosen component of the moment map $\mu$ is index increasing, then for each fixed point $p$ the GT-canonical class $\zeta_p$ exists. Conversely, if for all $p \in M^\T$ the GT-canonical class exist, then $\mu$ must be index increasing (see Theorem 4.1 and Remark 4.2 in \cite{GT}).

Below we propose a different choice of basis for the equivariant cohomology ring, making use of our definition of local index translated to the equivariant cohomology case. 

Similarly to what we did in Section \ref{section local index}, for a fixed point $q \in M^\T$ we define the map $\widetilde{r} \colon H^*_\T(M;\Z) \rightarrow H_{\T\times S^1}^*(H_q \times S^2;\Z)$ and denote by $\kappa$ the Kirwan map $\kappa \colon H_{\T\times S^1}^*(H_q \times S^2;\Z) \rightarrow H^*_{\widetilde{\T}_{q}}(\widetilde{H}^{\epsilon}_q;\Z)$ which is surjective (for surjectivity of $\kappa$ over the integers see \cite[Proposition 7.3]{TWquotients}).  
The index homomorphism in equivariant cohomlogy, corresponding to the equivariant K-theory push-forward map, is simply given by integration. 
By the Atiyah-Bott-Berline-Vergne Localization Formula \cite{AB,BV} this integral 
$$\ii \colon H^*_{\widetilde{\T}_{q}}(\widetilde{H}^{\epsilon}_q;\Z) \rightarrow H^{*-2\lambda_q}_{\widetilde{\T}_{q}}(\p ;\Z)$$
is the following sum of rational functions:
$$
 \ii (\alpha)= \int_{\widetilde{H}^{\epsilon}_q} \alpha=\sum_{q_j \in (\widetilde{H}^{\epsilon}_q)^{\T_q}}\frac{\alpha(p)}{\displaystyle \prod_{w_l \in W_{q_j}}w_l}.$$

Observe that if $\alpha \in H^{2\lambda}_{\widetilde{\T}_{q}}(\widetilde{H}^{\epsilon}_q;\Z)$ and $\lambda<\lambda_q$ then $\ii(\alpha)=0$.

For a class $\tau \in H_\T^*(M;\Z)$ let the
{\bf local index of $\tau$ at $q$}, denoted by $\ii_q(\tau)$, be the image of the above $\ii (\kappa \circ \widetilde{r} (\tau)) \in H^{*-2\lambda_q}_{\widetilde{\T}_{q}}(\p;\Z)$  under the natural homomorphism $H^*_{\widetilde{\T}_{q}}(\p;\Z) \rightarrow H^*_\T(\p;\Z)$ (compare with Definition \ref{definition local index}).
\begin{rmk}\label{index prop coh}
An analogous of Proposition \ref{prop indexproperties} holds for the local index in equivariant cohomology (the only difference is that $R(\T)$ from the K-theory setting needs to be replaced by $H^*_\T(\p;\Z)$). 
Moreover, by comparing the degrees, one sees that if $\tau \in  H_\T^{2 \lambda}(M;\Z)$ then $\ii_q(\tau)=0$ for all $q \in M^\T$ with $\lambda<\lambda_q$, and that for all $q \in M^\T$ with $\lambda \geq \lambda_q$ the degree of $\ii_q(\tau) \eta_q$ is equal to the degree of $\tau$ (here $\eta_q$ is the equivariant Poincar\'e dual to $F_q$).
 \end{rmk}
 \begin{defin}\label{definition canonical classes cohom} 
Let $(M,\omega,\T,\psi)$ be a symplectic toric manifold of dimension $2n$, together with 
a choice of a generic component of the moment map $\mu=\psi^{\overline{\xi}} \colon M\to \R$. Then for each $p\in M^\T$,
a Kirwan class $\tau_p \in H_\T^*(M;\Z)$ satisfying
\begin{enumerate}
 \item $\ii_p(\tau_p)=1$,
\item $\ii_q(\tau_p)=0$ for all points $q \in M^\T\setminus \{p\}$,
\end{enumerate}
is called an {\bf i-canonical class} at (the fixed point) $p$.
 \end{defin}
 \begin{rmk}
 Note that as the above classes are Kirwan, they satisfy $\tau_p(p)=\Lambda_p^-$, $\tau_p(q)=0$ for $q \prec p$, and they form a basis of $H_\T^*(M;\Z)$ as an $H_\T^*(\p;\Z)$-module. (We will see later that in fact $\tau_p$ is zero at all the points $q \in M^\T \setminus V_p^+$.)
By repeating the argument of Proposition \ref{uniqueness} one can show that if i-canonical classes exist they are unique.
 \end{rmk}
In what follows we will prove the existence of i-canonical classes, compare them with the equivariant Poincar\'e duals
to the flow-up manifolds, and
with the GT-canonical classes in the index increasing case. 
Before doing so we will prove some technical lemmas.
\subsection{Technical Lemmas}\label{TL}
We recall that given a moment map $\mu\colon X\to \R$ for an $S^1$ action, the subset of $X$ where $\mu$ achieves its minimum is connected.
Hence, if the action has only isolated fixed points, there exists a unique point $p\in X$ where $\mu$ acheives its minimum (see \cite[Lemma 5.1]{GS}). 
\begin{lemma} \label{lemma path to minimum}
Let $(X, \omega,\T,\psi)$ be a symplectic toric manifold together with a choice of a generic component $\mu\colon M\to \R$ of the moment map inducing an orientation on the associated GKM graph $(V,E)$.
Let $p$ be the  vertex corresponding to the fixed point where $\mu$ attains its minimum. Then for every vertex $q \in V$ there exists an increasing path $\overrightarrow{p \,p_k}, \ldots, \overrightarrow{p_1\, q}$ from $p$ to $q$.
\end{lemma}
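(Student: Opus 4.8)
The plan is to argue by induction on the number of vertices $q \in V$ with $\mu(q) > \mu(p)$, or equivalently by induction along the ordering $\prec$, exploiting the fact that the GKM graph of a symplectic toric manifold is the edge-skeleton of the moment polytope $\Delta = \psi(M)$. First I would dispose of the base case: if $q = p$ the empty path works. So assume $q \neq p$, and consider the face structure of $\Delta$ near the vertex $q$. Since $M$ is toric, the $n$ edges of $\Delta$ emanating from $q$ have primitive direction vectors forming a $\Z$-basis of the lattice; in the oriented GKM graph $\Gamma^o$, these split into the $\lambda_q$ edges coming \emph{into} $q$ (the ``flow-down'' directions, whose weights span $\mathcal{H}_q$) and the $n - \lambda_q$ edges going \emph{out} of $q$.

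The key step is to show that $q$ has at least one incoming edge, i.e.\ $\lambda_q \geq 1$ whenever $q \neq p$. This is immediate from the fact that $p$ is the \emph{unique} vertex where $\mu$ attains its minimum (quoted just above the statement, from \cite[Lemma 5.1]{GS} applied to the circle generated by $\overline\xi$): if $q$ had no incoming edge then all $n$ edges at $q$ would be outgoing, so $q$ would be a local minimum of $\mu$ along every isotropy sphere through it, hence $\mu(q) \leq \mu(r)$ for all neighbors $r$ of $q$ in the polytope graph. Because the $1$-skeleton of a polytope is connected and $\mu$ restricted to each edge is monotone, a local minimum of $\mu$ on the graph is a global minimum (one can propagate: from any vertex $s$ one can follow edges of strictly decreasing $\mu$-value until one is stuck at a local min, and all such local minima have the same $\mu$-value by Lemma 5.1 of \cite{GS}, forcing $q$ to be \emph{the} minimum $p$), contradicting $q \neq p$. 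Hence $\lambda_q \geq 1$.

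Now pick any incoming edge $\overrightarrow{q'\,q} \in E^o$, so that $\mu(q') < \mu(q)$. By the inductive hypothesis there is an increasing path $\gamma'$ from $p$ to $q'$ in $\Gamma^o$; appending the edge $\overrightarrow{q'\,q}$ to $\gamma'$ yields an increasing path from $p$ to $q$ (it is still increasing because $\mu$ strictly increases along every edge of $\gamma'$ and then strictly increases again across $\overrightarrow{q'\,q}$). This completes the induction.

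The only real subtlety — the step I expect to require the most care — is the claim that a vertex of the moment polytope with no outgoing... rather, no \emph{incoming} edge must be the global minimum of $\mu$; phrased geometrically, that ``local min on the polytope $1$-skeleton $\Rightarrow$ global min.'' This is where connectedness of the skeleton and the Atiyah–Guillemin–Sternberg-type convexity/connectedness input (via \cite[Lemma 5.1]{GS}) genuinely enters, rather than being a purely combinatorial triviality; everything else is bookkeeping with the orientation induced by $\mu$.
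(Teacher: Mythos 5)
Your proposal takes essentially the same approach as the paper's own proof: both arguments walk ``downward'' from $q$ along incoming edges (you phrase this as induction on $\prec$, the paper as direct iteration, a cosmetic difference) and both terminate by appealing to the fact that $p$ is the \emph{unique} vertex of $\Gamma^o$ with no incoming edge.

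The one place where you try to be more careful than the paper is also the one place you stumble slightly. You justify ``a vertex with no incoming edge must be $p$'' by invoking \cite[Lemma~5.1]{GS} to say that all local minima of $\mu$ on the $1$-skeleton have the same $\mu$-value. That lemma asserts that the set where $\mu$ attains its \emph{global} minimum is connected (hence a single point when fixed points are isolated); it says nothing directly about other local minima on the polytope graph, so the citation does not give what you ask of it. The claim itself is of course true, and for toric manifolds the cleanest justification is elementary convexity of the moment polytope $\Delta=\psi(M)$: the edges of $\Delta$ emanating from a vertex $v$ span a cone with apex $v$ containing $\Delta$, so if the linear functional $\langle\cdot,\overline{\xi}\rangle$ is nondecreasing along every such edge then $v$ minimizes it over all of $\Delta$. (Alternatively, $\mu$ being a Morse function with only even indices on the connected manifold $M$ gives exactly one index-$0$ critical point.) With that correction your proof is complete; the paper itself simply asserts the uniqueness of the vertex with $W_p^+=\emptyset$ without elaboration.
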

\begin{proof} 
The unique point $p$ where $\mu$ acheives its minimum is the fixed point with only negative 
weights (i.e.\ it corresponds to the only vertex with no incoming edges in the oriented GKM graph, that is $W_p^+=\emptyset$).
Take any vertex $q \in V$. If $q=p$ we are done. Otherwise there exists a vertex $p_1$ and an 
 edge $\overrightarrow{p_1\, q}\in E$ with $w(\overrightarrow{p_1\, q})\in W_q^+$, hence $\overrightarrow{p_1\, q}\in E^o$. If $p_1=p$ we are done. Otherwise we continue this process and construct a path $\overrightarrow{p_{l}\,p_{l-1}}, \ldots, \overrightarrow{p_1\, q}$ in the oriented GKM graph. As the GKM graph is finite and connected this procedure must end at $p$.
\end{proof}
We recall that given a symplectic toric manifold $(M,\omega,\T,\psi,\overline{\xi})$ with oriented GKM graph $(V,E^o)$ (which is not necessarily index increasing) and given $p\in M^\T$, $V_p^+$ is defined to be the set of vertices which can be joined to $p$ through an increasing path in $E^o$ (see Definition \ref{def vpplus}). The previous lemma, applied to $X=F_p$, implies the following.
\begin{lemma}\label{one inclusion}
Given a symplectic toric manifold $(M,\omega,\T,\psi,\overline{\xi})$, for every $p\in M^\T$ we have that
$$
F_p^\T \subset V_p^+
$$
\end{lemma}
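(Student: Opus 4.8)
The plan is to apply Lemma~\ref{lemma path to minimum} to the symplectic toric manifold $F_p$ and then reinterpret the resulting path inside the oriented GKM graph $(V,E^o)$ of $M$.

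First I would record that $F_p$, as explained on page~\pageref{def fq}, is a closed $\T$-invariant symplectic submanifold carrying an effective symplectic toric action of the quotient torus $\T_p=\T/\T_p^0$. The function $\mu|_{F_p}$ is a generic component of the moment map of this $\T_p$-action: it equals $\psi_{F_p}^{\eta}$, where $\eta$ is the image of $\overline{\xi}$ under $\mathfrak{t}\to \mathrm{Lie}(\T_p)$; it separates the fixed points of $F_p$ because $\mu$ separates all of $M^\T$; and $\eta$ is generic because the isotropy weights of $\T_p$ on $T_sF_p$, for $s\in F_p^\T$, form a sub-multiset of $W_s$, so they pair nontrivially with $\eta$ exactly as the corresponding weights of $\T$ pair with $\overline{\xi}$. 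As was already noted in the proof of Lemma~\ref{def kirwanclasses}, $p$ is the minimum of $\mu$ on the connected manifold $F_p$.

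Next I would invoke Lemma~\ref{lemma path to minimum} with $X=F_p$: since $p$ is the vertex at which $\mu|_{F_p}$ attains its minimum, every vertex $q$ of the oriented GKM graph of $F_p$ --- that is, every $q\in F_p^\T$ --- is joined to $p$ by an increasing path in that graph. Finally I would observe that the oriented GKM graph of $F_p$ is, as an oriented labelled graph, a subgraph of $(V,E^o)$: each isotropy sphere of $F_p$ is also an isotropy sphere of $M$ with the same stabiliser and the same weight (this is transparent from the description of both GKM graphs via the one-skeletons of the moment polytopes, that of $F_p$ corresponding to the face $\mathcal{F}_p\cap\psi(M)$ of $\psi(M)$), and the orientation of an edge depends only on comparing the values of the moment map --- equivalently of its restriction --- at the two endpoints. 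Hence an increasing path from $p$ to $q$ inside $F_p$ is a sequence of edges of $E^o$, which is precisely the condition $q\in V_p^+$; this yields $F_p^\T\subset V_p^+$. The only point that needs a little care, and thus the main (if minor) obstacle, is this last compatibility of edges and orientations between the two GKM graphs; everything else is a direct citation of earlier results.
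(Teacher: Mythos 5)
Your proposal is correct and follows exactly the paper's route: the authors dispose of Lemma~\ref{one inclusion} by the single sentence ``The previous lemma, applied to $X=F_p$, implies the following,'' and you have simply spelled out the details they leave implicit (that $F_p$ is a symplectic toric $\T_p$-manifold with $p$ as the minimum of $\mu|_{F_p}$, and that the oriented GKM graph of $F_p$ embeds in $(V,E^o)$ — a fact the paper uses explicitly in the proof of Lemma~\ref{lemma Fp bigger index}). No substantive difference.
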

In Proposition \ref{proposition fp is vpplus} we will prove that the opposite inclusion also holds provided that $(M,\omega,\T,\psi,\overline{\xi})$ is index increasing.
\\

{\bf \underline{Convention:}}
In the rest of subsection \ref{TL}, $(M,\omega, \T, \psi, \overline{\xi})$ denotes a compact symplectic toric manifold together a choice of generic $\overline{\xi} \in \mathfrak{t}$ such that the corresponding moment map component $\mu=\psi^{\overline{\xi}}\colon M\to \R$ is \emph{\underline{index increasing.}} The associated oriented GKM graph is denoted by $\Gamma^o=(V,E^o)$. These hypotheses and notation apply to all of the following lemmas, propositions and corollaries.

\begin{lemma}\label{lemma Fp bigger index}
Let $p \in M^\T$ and $(F_p,\omega_{|F_p})$ the flow up at $p$ (see definition in Section \ref{section local index}). Then for any $q \in F_p^\T \setminus \{p\}$ we have that $\lambda_q > \lambda_p$.
\end{lemma}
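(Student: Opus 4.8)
The plan is to exploit the index increasing hypothesis together with the fact, established in Lemma \ref{lemma path to minimum} (applied to $X = F_p$), that every fixed point $q \in F_p^\T$ can be reached from $p$ by an increasing path lying entirely in the GKM graph of $F_p$. First I would recall that $F_p$ is itself a compact symplectic toric manifold (as noted in Section \ref{section local index}, with residual torus $\T_p = \T/\T_p^0$), and that $p$ is the unique minimum of $\mu_{|F_p}$, so $W_p^+ = \emptyset$ when the isotropy weights are computed \emph{inside} $F_p$; equivalently, all the $\lambda_p$ positive weights of $\mu$ at $p$ point in directions normal to $F_p$, and all $n - \lambda_p$ tangent directions at $p$ are negative for $\mu$.

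The key point is to compare the Morse index of $\mu$ at a point $q \in F_p^\T$ with the Morse index of $\mu_{|F_p}$ at $q$. The tangent space $T_qM$ splits $\T$-equivariantly as $T_qF_p \oplus N(F_p)|_q$, and correspondingly the positive weights at $q$ split into those tangent to $F_p$ and those normal to it. So $\lambda_q = \lambda_q^{F_p} + \lambda_q^{N}$, where $\lambda_q^{F_p}$ is the number of positive weights tangent to $F_p$ and $\lambda_q^N$ the number normal to it. Since $q \ne p$ and $q$ lies on an increasing path in $E^o(F_p)$ starting from $p$, there is at least one edge of $\Gamma^o$ lying inside $F_p$ and ending at $q$; its weight is a positive weight tangent to $F_p$, so $\lambda_q^{F_p} \geq 1$. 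Now I would use the index increasing hypothesis on the ambient manifold: along the last edge $\overrightarrow{r\,q}$ of that increasing path (with $r \in F_p^\T$), we have $\lambda_r < \lambda_q$. Iterating back down the increasing path from $q$ to $p$, each step strictly increases the ambient Morse index, so $\lambda_p < \lambda_q$ as soon as the path has length at least one, which it does because $q \neq p$.

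The main obstacle — really the only thing requiring care — is making sure the increasing path from $p$ to $q$ can be chosen \emph{inside} $F_p$ (so that intermediate vertices are genuinely in $F_p^\T$ and the index increasing inequality can be applied edge by edge); this is exactly what Lemma \ref{lemma path to minimum} gives when applied to the toric manifold $X = F_p$ with moment map component $\mu_{|F_p}$, whose induced oriented GKM graph is the full subgraph of $\Gamma^o$ on the vertex set $F_p^\T$. One should double-check that the edges of this subgraph are edges of $\Gamma^o$ with the same orientation — this follows because an isotropy sphere of $F_p$ is an isotropy sphere of $M$ and $\mu$ restricts to $\mu_{|F_p}$, so the orientation induced by $\mu_{|F_p}$ agrees with that induced by $\mu$. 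Once this is in place, the strict monotonicity of $\lambda$ along the path is immediate from Definition \ref{ii and nii}, and the conclusion $\lambda_q > \lambda_p$ follows.
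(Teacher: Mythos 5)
Your proof is correct and follows essentially the same route as the paper: apply Lemma~\ref{lemma path to minimum} to $F_p$ (viewed as a symplectic toric manifold) to obtain an increasing path from $p$ to $q$ inside the oriented GKM subgraph of $F_p$, then use the index increasing hypothesis on $M$ to conclude that the ambient Morse index strictly increases along each edge of that path. The preliminary discussion of the splitting $\lambda_q = \lambda_q^{F_p} + \lambda_q^N$ is correct but not actually needed once you run the path argument.
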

\begin{proof} 
Let $\T_p^0\subset \T$ be the subtorus acting trivially on $F_p$. We recall that $(F_p,\omega_{|F_p})$ together with the action of $\T / \T_p^0$ is a symplectic toric manifold.
Moreover we can direct the edges of the corresponding GKM graph $\widetilde{\Gamma}$ by using $\mu|_{F_p}$, thus obtaining an oriented GKM graph $\widetilde{\Gamma}^o$ which is a
subgraph of the oriented GKM graph associated to $(M,\omega, \T, \psi, \xi)$. The oriented graph associated to $F_p$ is not necessarily index increasing with respect to indices computed in $F_p$. 
However, by Lemma \ref{lemma path to minimum}, there exists an oriented path in $\widetilde{\Gamma}^o$ from $p$ to $q$ such that $\mu$ increases on each edge. 
Therefore the index (in $M$) also increases on each edge, which proves that $\lambda_q > \lambda_p$.
\end{proof}

\begin{prop}\label{pd is gt}
Let $\eta_p \in H^{2\lambda_p}_{\T}(M;\Z)$ be the equivariant Poincar\'e dual to $F_p$ (see Definition \ref{eqcoh poincare duals}) and $\zeta_p \in H^{2\lambda_p}_{\T}(M;\Z)$ the GT-canonical class at $p\in M^\T$. Then $\eta_p= \zeta_p$.
\end{prop}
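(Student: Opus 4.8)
The plan is to verify that the equivariant Poincar\'e dual $\eta_p$ satisfies the two defining properties of a GT-canonical class at $p$ (Definition \ref{definition gtclasses}), and then to invoke the uniqueness of GT-canonical classes (\cite[Lemma 2.7]{GT}) to conclude that $\eta_p=\zeta_p$. Note that under the standing convention both classes live in $H^{2\lambda_p}_\T(M;\Z)$, so the degrees already match.

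First I would check property (i). By construction (Definition \ref{eqcoh poincare duals} and the discussion preceding it), $N(F_p)|_p=N_p^-$, hence $\eta_p(p)=\chi_\T\big(N(F_p)|_p\big)=\Lambda_p^-$, which is exactly property (i).

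Next I would check property (ii): for every $q\in M^\T\setminus\{p\}$ with $\lambda_q\leq\lambda_p$ one has $\eta_p(q)=0$. There are two cases. If $q\notin F_p^\T$, then $\eta_p(q)=0$ directly from the definition of $\eta_p$. If instead $q\in F_p^\T$, then since $q\neq p$, Lemma \ref{lemma Fp bigger index} (which applies precisely because $\mu$ is index increasing) forces $\lambda_q>\lambda_p$, contradicting the assumption $\lambda_q\leq\lambda_p$; so this case does not occur. Therefore $\eta_p(q)=0$ whenever $q\neq p$ and $\lambda_q\leq\lambda_p$, and $\eta_p$ is a GT-canonical class at $p$.

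Since $\eta_p$ and $\zeta_p$ are both GT-canonical classes at $p$, and GT-canonical classes are unique whenever they exist (\cite[Lemma 2.7]{GT}), we conclude $\eta_p=\zeta_p$. I do not expect any genuine obstacle here: the only nontrivial input is Lemma \ref{lemma Fp bigger index}, i.e.\ the fact that the Morse index strictly increases along $F_p$, which in turn follows from Lemma \ref{lemma path to minimum} applied to the toric submanifold $F_p$. The real content of the proposition is thus that the geometrically natural basis of equivariant Poincar\'e duals to flow-up manifolds already obeys the Goldin--Tolman normalization once $\mu$ is index increasing.
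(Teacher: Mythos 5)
Your proof is correct and is essentially identical to the paper's own argument: both verify that $\eta_p$ satisfies the two Goldin--Tolman conditions (using $N(F_p)|_p=N_p^-$ for condition (i) and Lemma \ref{lemma Fp bigger index} for condition (ii)) and then invoke uniqueness of GT-canonical classes.
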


\begin{proof}
Notice that our index increasing assumption implies that GT-canonical classes exist at each $p\in M^\T$.
As the conditions in Definition \ref{definition gtclasses} define GT-canonical classes uniquely, it is sufficient to prove that $\eta_p(p)= \Lambda_p^-$ and that for all $q \in M^\T \setminus \{p\}$ with $\lambda_q \leq \lambda_p$ we have $\eta_p(q)=0$. The first assertion follows from observing that the normal bundle of $F_p$ at $p$ coincides with the negative normal bundle of $\mu$ at $p$.
 To prove the second one we use Lemma \ref{lemma Fp bigger index}, which implies that for any $q \in M^\T \setminus \{p\}$ with $\lambda_q \leq \lambda_p$ we must have $q \in M^\T\setminus F_p$,
 and at these points $\eta_p(q)=0$ by definition. 
\end{proof}

\begin{rmk} There is an alternative way of proving that $\eta_p=\zeta_p$ in the index increasing case. If the class $\eta_p - \zeta_p$ were nonzero, it would be nonzero at some fixed point $q$. As both $\eta_p$ and $\zeta_p$ vanish on points in $M^\T \setminus \{p\}$ of index smaller or equal to the index of $p$ (see Lemma \ref{lemma Fp bigger index}), the class $\eta_p - \zeta_p$ can only be nonzero at fixed points $q$ with $\lambda_q>\lambda_p$. Since GT-canonical classes form a basis of $H_\T ^*(M,\Z)$, from the previous observation it follows that $\eta_p-\zeta_p=\sum_{q; \lambda_q>\lambda_p} c_q \zeta_q$, where $c_q \in H_\T^*(\p;\Z)$, with $c_s\neq 0$ for some $s$. By comparing the degrees of the classes on the right and on the left hand side of the above equation we obtain a contradiction. 
\end{rmk}

\begin{lemma}\label{lemma below q in hq}
Let $q \in M^\T$ and $(H_q,\omega_{|H_q})$ be the flow down at $q$ (see the definition in Section \ref{section local index}). Let $p$ be a fixed point connected to $q$ by an oriented edge $\overrightarrow{p\,q}$ and such that $\lambda_p+1=\lambda_q$. Denote by $p_1,\ldots,p_{\lambda_p}$ the fixed points connected to $p$ by oriented edges $\overrightarrow{p_j\,p}$. Then $p_1,\ldots,p_{\lambda_p} \in H_q$.
\end{lemma}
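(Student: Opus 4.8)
The plan is to work entirely inside the polytope combinatorics, using the characterization of the flow-down $H_q$ as $\psi^{-1}(\mathcal{H}_q \cap \psi(M))$, where $\mathcal{H}_q = \psi(q) + \R\langle w_1, \ldots, w_{\lambda_q}\rangle$ and $w_1, \ldots, w_{\lambda_q}$ are the weights of the isotropy representation in the negative normal bundle at $q$ (equivalently, the weights $w(\overrightarrow{q_j\,q})$ on the incoming oriented edges at $q$). The point $p$ is connected to $q$ by an oriented edge $\overrightarrow{p\,q} \in E^o$, so in the polytope $\psi(p) - \psi(q)$ is a negative multiple of $w(\overrightarrow{p\,q})$, and since $w(\overrightarrow{p\,q}) \in W_q^+$ it lies in the span $\R\langle w_1, \ldots, w_{\lambda_q}\rangle$; hence $p \in H_q^\T$ and $H_p$ (the flow-down at $p$) is naturally a face of $H_q$.

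First I would note that since $\lambda_p + 1 = \lambda_q$ and the edge $\overrightarrow{p\,q}$ is increasing, the face $\mathcal{H}_p \cap \psi(M)$ is a facet of the face $\mathcal{H}_q \cap \psi(M)$: indeed $\dim \mathcal{H}_p = \lambda_p = \lambda_q - 1 = \dim \mathcal{H}_q - 1$, and $\psi(p)$ is a vertex of the face $\mathcal{H}_q \cap \psi(M)$. The key observation is that $p$ is the \emph{minimum} of $\mu$ restricted to $H_p$, and that all the incoming oriented edges at $p$ that lie inside $H_q$ are precisely the edges of the face $\mathcal{H}_q \cap \psi(M)$ emanating downward from the vertex $\psi(p)$. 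So I need to show that all $\lambda_p$ incoming edges $\overrightarrow{p_j\,p}$ at $p$ actually lie in the face $\mathcal{H}_q\cap\psi(M)$, i.e. that their weights $w(\overrightarrow{p_j\,p})$ lie in $\R\langle w_1,\ldots,w_{\lambda_q}\rangle$.

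The argument is a dimension count at the vertex $\psi(p)$ of the polytope $\psi(M)$, viewed inside the face $G := \mathcal{H}_q \cap \psi(M)$. At $\psi(p)$ the polytope $\psi(M)$ is a simple polytope, so exactly $n$ edges emanate from $\psi(p)$, with weights forming a $\Z$-basis of $\ell^*$; of these, $\lambda_p$ are incoming (the $\overrightarrow{p_j\,p}$) and $n - \lambda_p$ are outgoing with respect to $\mu$. Now $G$ is a $\lambda_q$-dimensional face containing $\psi(p)$, so exactly $\lambda_q$ of these $n$ edges lie in $G$. Among those $\lambda_q$ edges in $G$, at most one can be outgoing from $\psi(p)$ within $G$: this is where I use that $\psi(p)$ sits just below $\psi(q)$ along the edge $\overrightarrow{p\,q}$ and that $G$ is a face on which $\mu$ is still Morse with $\psi(q)$ its maximum and — the crucial input — $\lambda_p+1=\lambda_q$, which by Lemma~\ref{lemma Fp bigger index} (applied to $F$ analogues, or directly) forces the Morse index of $p$ \emph{within} $G$ to be exactly $\lambda_q - 1 = \lambda_p$, leaving exactly one outgoing edge of $G$ at $\psi(p)$, namely $\overrightarrow{p\,q}$ itself. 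Hence the remaining $\lambda_q - 1 = \lambda_p$ edges of $G$ at $\psi(p)$ are all incoming, and being $\lambda_p$ incoming edges of $\psi(M)$ at $\psi(p)$ inside $G$, they must be exactly $\overrightarrow{p_1\,p}, \ldots, \overrightarrow{p_{\lambda_p}\,p}$. Therefore all $p_j \in G$, i.e. $p_1, \ldots, p_{\lambda_p} \in H_q$.

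I expect the main obstacle to be making precise and rigorous the claim that $p$ has Morse index exactly $\lambda_p$ as a critical point of $\mu|_G$ (equivalently $\mu|_{H_q}$), i.e. that the face $G$ "captures" all $\lambda_p$ downward directions at $\psi(p)$ and exactly one upward direction. One clean way to see this: the flow-down $H_p$ is a face of $H_q$ of dimension $\lambda_p$, it is the flow-down at $p$ in $H_q$ as well, and $\psi(p)$ is its minimum; meanwhile $\mu|_{H_q}$ restricted to the edges of $G$ at $\psi(p)$ distinguishes incoming from outgoing, and a global count using that $H_q$ is toric of real dimension $2\lambda_q$ (so $\mu|_{H_q}$ has exactly one maximum, at $q$, with full index $\lambda_q$) together with the index-increasing hypothesis on the edge $\overrightarrow{p\,q}$ pins down the index of $p$ in $H_q$ to be $\lambda_q - 1$. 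Once that index count is secured, the conclusion is immediate from simplicity of the polytope at $\psi(p)$.
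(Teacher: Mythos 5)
Your approach is the right one — an edge count at $\psi(p)$ inside the face $\mathcal{H}_q\cap\psi(M)$ — and it matches the paper's strategy. However, there is a genuine gap at precisely the spot you yourself flag as ``the main obstacle'', and as written your argument does not close it. The problems are these. In your opening paragraph you assert that $H_p$ is a face of $H_q$; this is \emph{equivalent} to the statement you are trying to prove (since $H_p \subset H_q$ iff $\R\langle W_p^+\rangle\subset\R\langle W_q^+\rangle$ iff all $p_j\in H_q$), and your dimension count $\dim\mathcal{H}_p=\dim\mathcal{H}_q-1$ together with $\psi(p)\in\mathcal{H}_q\cap\psi(M)$ does not force one face to contain the other — the polytope generally has many $\lambda_p$-dimensional faces through $\psi(p)$ that are \emph{not} facets of $\mathcal{H}_q\cap\psi(M)$. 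Likewise, the later claim that ``the Morse index of $p$ within $G$ is exactly $\lambda_q-1$'' is again equivalent to the conclusion, and invoking Lemma \ref{lemma Fp bigger index} plus index-increasing on the single edge $\overrightarrow{p\,q}$ does not by itself pin that index down.

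What is missing is the paper's explicit exclusion step, which is short but essential: the $n-1-\lambda_p = n-\lambda_q$ outgoing edges at $p$ other than $\overrightarrow{p\,q}$ go to points $r$ with $\lambda_r>\lambda_p=\lambda_q-1$ (index-increasing at \emph{all} outgoing edges from $p$, not just $\overrightarrow{p\,q}$), hence $\lambda_r\geq\lambda_q$; but Lemma \ref{lemma Fp bigger index} applied to $-\mu$ and $H_q$ shows every $s\in H_q^\T\setminus\{q\}$ has $\lambda_s<\lambda_q$, so no such $r$ lies in $H_q$. Since the GKM graph of $H_q$ at the vertex $p$ has valency $\lambda_q$, and the only candidate edges left after this exclusion are $\overrightarrow{p\,q}$ and the $\lambda_p$ incoming edges $\overrightarrow{p_j\,p}$, all of the latter must lie in $H_q$. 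If you insert this exclusion argument in place of the two circular assertions, your proof becomes the paper's proof.
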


\begin{proof}
Note that the GKM graph for $H_q$ is a graph of valency $\lambda_q$. Therefore exactly $\lambda_q$ of the points connected to $p$ must be in $H_q$. 
Take any  $r \neq q$ connected to $p$ by an edge $\overrightarrow{p\,r}$. There are $n-1-\lambda_p=n-\lambda_q$ such points.
Then $\lambda_r >\lambda_p=\lambda_q-1$, thus $\lambda_r \geq \lambda_q$. Lemma \ref{lemma Fp bigger index} applied to $-\mu$ and $H_q$ (instead of $\mu$ and $F_q$) gives that $r \notin H_q$. Therefore these $n-\lambda_q$ points connected to $p$ are not in $H_q$. It follows that the remaining $\lambda_q$ points, $q$ and the $\lambda_q-1=\lambda_p$ points $p_1,\ldots,p_{\lambda_p}$ directly below $p$, must be in $H_q$.
\end{proof}
To continue we need to recall some definitions from \cite{GT}. Given a weight $w \in \ell^*$ in the weight lattice of $\mathfrak{t}^*$, and a generic $\xi\in \mathfrak{t}$, the projection which sends
$X \in \mathfrak{t}^*$ to $X - \frac{ X (\xi) }{ w (\xi) }w \in \xi^{\perp} \subset \mathfrak{t}^*$ can be extended to be an endomorphism $\rho_{w}$ of $\SS(\mathfrak{t}^*)$, the symmetric algebra of $\mathfrak{t}^*$. 
 Given an edge $\overrightarrow{r_1\, r_2}\in E$ of the GKM graph of $M$, the independence of the set of weights at each fixed point implies that $\rho_{w(\overrightarrow{r_1r_2})}(\Lambda_{r_1}^-) \neq 0$ and $\rho_{w(\overrightarrow{r_1,r_2})}\Big(\frac{\Lambda_{r_2}^-}{w(\overrightarrow{r_1r_2})}\Big) \neq 0$. Therefore the following nonzero elements of the field of fractions of $\SS(\mathfrak{t}^*)$ are well defined for all $\overrightarrow{r_1 r_2}\in E$
$$\Theta(r_1,r_2)= \frac{\rho_{w(\overrightarrow{r_1r_2})}(\Lambda_{r_1}^-)}{\rho_{w(\overrightarrow{r_1r_2})}\Big(\displaystyle\frac{\Lambda_{r_2}^-}{w(\overrightarrow{r_1r_2})}\Big)}.$$
In \cite[Theorem 1.6]{GT} Goldin and Tolman prove that $\Theta(r,r') \in \Z \setminus \{0\}$ for all edges $\overrightarrow{r_1 r_2}\in E$ with $\lambda_{r_2}=\lambda_{r_1}+1.$
(Note that the assumption that the difference of the indices is $1$, though not explicitly stated in their theorem, is implied and required.)
Moreover, they prove the following formula for computing the restriction of a GT-canonical class to a fixed point. 
Let $(V,E_{can})$ be the subgraph of the GKM graph $\Gamma=(V,E)$ where $E_{can}=\{e=\overrightarrow{r_1\,r_2}\in E\mid \lambda_{r_2}=\lambda_{r_1}+1\}$.  
Since the oriented GKM graph is index increasing, this implies that $\mu(r_1)<\mu(r_2)$ for every $\overrightarrow{r_1\,r_2} \in E_{can}$, i.e.\ $E_{can}$ is a subset of $E^o$. 
Then for every $p,q\in M^\T$ we have that
\begin{equation}\label{gt theta formula}
\zeta_p(q)=\Lambda_q^- \sum_{\gamma \in \sum_p^q} \prod_{i=1}^{|\gamma|} \frac{\psi(r_i)-\psi(r_{i-1})}{\psi(q)-\psi(r_{i-1})} \frac{\Theta(r_{i-1},r_i)}{w(\overrightarrow{r_{i-1}\,r_i})},
\end{equation}
where $\sum_p^q$ is the set of paths from $p$ to $q$ in $(V,E_{can})$ and $\gamma\in \sum_p^q$ is given by the sequence of vertices $\gamma=(r_0,\ldots,r_{|\gamma|})$; here $|\gamma|$ denotes the length of $\gamma$, i.e.\ the number of edges composing it.
\begin{lemma}\label{lemma index one}
For all $e=\overrightarrow{r_1r_2}\in E_{can}$ we have that $\Theta(r_1,r_2) =1$.
\end{lemma}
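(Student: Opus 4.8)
The plan is to show directly that the numerator and denominator appearing in the definition of $\Theta(r_1,r_2)$ are the \emph{same} element of $\SS(\lie{t}^*)$, so that the quotient is $1$. Write $w=w(\overrightarrow{r_1\,r_2})$ and let $\T'=\T'_{\overrightarrow{r_1\,r_2}}\subset\T$ be the codimension-one subtorus fixing the isotropy sphere $S^2=S^2_{\overrightarrow{r_1\,r_2}}$. Since $e\in E_{can}$ we have $\lambda_{r_2}=\lambda_{r_1}+1$ and $\mu(r_1)<\mu(r_2)$; hence $w\in W_{r_2}^+$ (the sphere direction is part of the negative normal bundle of $\mu$ at $r_2$), while the weight $-w$ of $T_{r_1}S^2$ is a negative weight at $r_1$, so $-w\notin W_{r_1}^+$. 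Also $r_1\in H_{r_2}$ (the flow-down at $r_2$): indeed $\psi(r_1)=\psi(r_2)-mw$ with $m>0$ and $w\in W_{r_2}^+$, so $\psi(r_1)\in\psi(r_2)+\R\langle W_{r_2}^+\rangle=\mathcal H_{r_2}$.

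First I would build the comparison isomorphism. Exactly as in the proof of Lemma \ref{def kirwanclasses}, the triviality of the $\T'$-action on $S^2$ gives a $\T'$-module isomorphism $\varphi\colon T_{r_1}M\xrightarrow{\ \sim\ }T_{r_2}M$, i.e.\ a bijection $\varphi\colon W_{r_1}\to W_{r_2}$ with $\varphi(u)-u\in\Z w$ for every weight $u$ (two weights agreeing on $\lie{t}'=\ker w$ differ by an element of $(\lie{t}')^\perp\cap\ell^*=\Z w$, as $w$ is primitive). Since $S^2$ and $H_{r_2}$ are both $\T$-invariant, the summands $TS^2\oplus(NS^2\cap TH_{r_2})\oplus NH_{r_2}$ restrict over $S^2$ to $\T$-equivariant bundles on which $\T'$ acts trivially on the base; connectedness of $S^2$ lets me choose $\varphi$ respecting all three summands, so in particular $\varphi(-w)=w$ and $\varphi$ sends the weight set of $T_{r_1}H_{r_2}$ onto that of $T_{r_2}H_{r_2}$.

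Next I would pin down the weight set of $T_{r_1}H_{r_2}$. Apply Lemma \ref{lemma below q in hq} with $q=r_2$ and $p=r_1$ (valid: $\overrightarrow{r_1\,r_2}\in E^o$ and $\lambda_{r_1}+1=\lambda_{r_2}$): the $\lambda_{r_1}$ fixed points lying directly below $r_1$ — the ones joined to $r_1$ by incoming oriented edges, whose weights at $r_1$ are precisely $W_{r_1}^+$ — all lie in $H_{r_2}$. As $\dim_\C H_{r_2}=\lambda_{r_2}=\lambda_{r_1}+1$ and $-w$ is already a weight of $T_{r_1}H_{r_2}$, this forces the weight set of $T_{r_1}H_{r_2}$ to be exactly $W_{r_1}^+\cup\{-w\}$. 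On the other hand, $r_2$ is the maximum of $\mu|_{H_{r_2}}$ by definition of the flow-down, so the weight set of $T_{r_2}H_{r_2}$ is $W_{r_2}^+$. Combining with the previous paragraph, $\varphi(W_{r_1}^+\cup\{-w\})=W_{r_2}^+$, and since $\varphi(-w)=w$ we obtain the set equality
$$\varphi(W_{r_1}^+)=W_{r_2}^+\setminus\{w\}.$$
Finally, using that $\rho_w$ is a ring endomorphism of $\SS(\lie{t}^*)$ with $\rho_w(w)=0$ (hence $\rho_w(\varphi(u))=\rho_w(u)$ for every weight $u$), and that $\Lambda_{r_1}^-=\prod_{u\in W_{r_1}^+}u$ while $\Lambda_{r_2}^-/w=\prod_{v\in W_{r_2}^+\setminus\{w\}}v$, I would conclude
\begin{align*}
\rho_w(\Lambda_{r_1}^-)=\prod_{u\in W_{r_1}^+}\rho_w(u)=\prod_{u\in W_{r_1}^+}\rho_w(\varphi(u))=\prod_{v\in W_{r_2}^+\setminus\{w\}}\rho_w(v)=\rho_w\!\Big(\tfrac{\Lambda_{r_2}^-}{w}\Big).
\end{align*}
Both sides are nonzero (independence of the weights at each vertex, as noted in the text), so $\Theta(r_1,r_2)=1$.

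The main obstacle is the weight identification $T_{r_1}H_{r_2}\leftrightarrow W_{r_1}^+\cup\{-w\}$: this is where the index-increasing hypothesis genuinely enters, via Lemma \ref{lemma below q in hq} (itself resting on Lemma \ref{lemma Fp bigger index}), and where one must be careful that $w$ is a weight \emph{at $r_2$} whereas $-w$ is the corresponding weight \emph{at $r_1$}. Once $\varphi$ and this identification are in place, the rest is a formal computation.
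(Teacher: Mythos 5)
Your proof is correct and follows essentially the same approach as the paper: reduce $\Theta(r_1,r_2)=1$ to the existence of a bijection $W_{r_1}^+\to W_{r_2}^+\setminus\{w\}$ agreeing modulo $w=w(\overrightarrow{r_1r_2})$, obtained from the $\T'$-equivariant identification of the normal bundle of $S^2_{\overrightarrow{r_1r_2}}$ inside $H_{r_2}$ at the two poles. Your write-up makes explicit the step the paper leaves terse, namely using Lemma~\ref{lemma below q in hq} together with the dimension count $\dim_\C H_{r_2}=\lambda_{r_2}=\lambda_{r_1}+1$ to pin down the weight set of $T_{r_1}H_{r_2}$ as $W_{r_1}^+\cup\{-w\}$.
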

Note that the above lemma may not hold for GKM manifolds which are not toric (see \cite[Example 5.2]{GT})
\begin{proof}
Recall that $W_r^+$ denotes the set of weights of the edges ending at $r$. 
Observe that $\rho_{w}$ sends $w$ to $0$, so it is enough to prove that there exists a bijection $\theta \colon W_{r_1}^+ \rightarrow W_{r_2}^+\setminus \{w(\overrightarrow{r_1r_2})\}$ such that $\theta(w)-w= m \cdot w(\overrightarrow{r_1r_2})$ for some $m \in \Z$ that depends on $w$. This is equivalent to proving that the weights of $\T$ representation on $T_{r_2}H_{r_2}$ and $T_{r_1}H_{r_2}$ agree modulo $w(\overrightarrow{r_1r_2})$. The last fact follows from observing that the subtorus $\T' = \textrm{exp}(\{\eta \in \mathfrak{t}\,|\,w(\overrightarrow{r_1r_2})(\eta)=0\})$ fixes the sphere $S^2_{\overrightarrow{r_1 r_2}}=\psi^{-1}(\overrightarrow{r_1 r_2})$, an embedded $\T$ invariant submanifold of $H_{r_2}$ (which,
in turn,  is a smooth $\T$ invariant submanifold of $M$).
Thus the representations of the torus $\T'$ on the normal bundle of $S^2_{\overrightarrow{r_1 r_2}}$ in  $H_{r_2}$ need to agree at $r_1$ and $r_2$.
\end{proof}
The next lemma proves that for each $q \in V_p^+$ there exists a path $\gamma$ whose edges belong to $E_{can}$. 
\begin{lemma}\label{sigmapq=vpp}
Let $p\in M^\T$ and let $\Sigma_p^q$ be defined as before. Then
$q\in V_p^+$ if and only if $\sum_p^q\neq \emptyset$.
\end{lemma}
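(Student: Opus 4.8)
The plan is to prove the two implications of Lemma~\ref{sigmapq=vpp} separately; the direction ``$\Sigma_p^q \neq \emptyset \Rightarrow q \in V_p^+$'' is essentially immediate, while the converse carries all the content.

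For the easy direction: a path $\gamma \in \Sigma_p^q$ consists of edges in $E_{can}$, and since $E_{can} \subseteq E^o$ (already observed, using that $\mu$ is index increasing), such a $\gamma$ is in particular an increasing path in $E^o$ from $p$ to $q$; hence $q \in V_p^+$ by Definition~\ref{def vpplus}.

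For the converse I would argue by contradiction, via the Goldin--Tolman restriction formula. First I would record that $V_p^+ = F_p^\T$: the inclusion $F_p^\T \subseteq V_p^+$ is Lemma~\ref{one inclusion}, and for $V_p^+ \subseteq F_p^\T$ I would use that the flow-up $F_p$ is closed under following oriented edges, i.e.\ $r \in F_p^\T$ and $\overrightarrow{r\,r'} \in E^o$ imply $r' \in F_p^\T$ (Corollary~\ref{edges in}), so that a straightforward induction along an increasing path $p = r_0 \to \cdots \to r_m = q$ gives $q \in F_p^\T$. Now fix $q \in V_p^+ = F_p^\T$ and suppose, for contradiction, that $\Sigma_p^q = \emptyset$. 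By Definition~\ref{eqcoh poincare duals}, $\eta_p(q) = \chi_\T(N(F_p)|_q)$ is a product of nonzero isotropy weights inside the integral domain $\SS(\mathfrak{t}^*)$, so $\eta_p(q) \neq 0$. On the other hand $\eta_p = \zeta_p$ by Proposition~\ref{pd is gt}, and formula~\eqref{gt theta formula} writes $\zeta_p(q)$ as $\Lambda_q^-$ times a sum indexed by $\Sigma_p^q$; if $\Sigma_p^q$ is empty this sum is empty, so $\zeta_p(q) = 0$, contradicting $\eta_p(q) = \zeta_p(q) \neq 0$. Hence $\Sigma_p^q \neq \emptyset$, completing the proof.

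The main obstacle is the inclusion $V_p^+ \subseteq F_p^\T$: it genuinely fails without the index-increasing hypothesis (for the Hirzebruch surface of Figure~\ref{fig:hirzebruchclasses}, $V_p^+$ can be strictly larger than $F_p^\T$), so one cannot sidestep that assumption --- either by invoking Corollary~\ref{edges in}, or, for a self-contained argument, by showing directly that $F_p = \overline{W^u(p)}$ is stable under the closure of the gradient flow, which is Morse--Smale precisely because $\mu$ is index increasing. A more hands-on alternative, which avoids the Goldin--Tolman machinery, is an induction on $\lambda_q - \lambda_p$: it is enough to prove that every $q \in V_p^+ \setminus \{p\}$ admits a down-neighbour $r \in V_p^+$ with $\lambda_r = \lambda_q - 1$ (so $\overrightarrow{r\,q} \in E_{can}$), which one would establish inside the flow-down $H_q$ --- a symplectic toric manifold of dimension $2\lambda_q$ with $q$ as its maximum --- using that the second-highest fixed point of $H_q$ is adjacent to $q$ and has $M$-index exactly $\lambda_q - 1$ (compare Lemma~\ref{lemma Fp bigger index}) together with Lemma~\ref{lemma below q in hq}; there the delicate point is ensuring the chosen vertex actually lies in $V_p^+$.
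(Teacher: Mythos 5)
Your argument for the nontrivial direction is circular. To establish $V_p^+ \subseteq F_p^\T$ you invoke Corollary~\ref{edges in}, but in the paper Corollary~\ref{edges in} is a \emph{consequence} of Proposition~\ref{proposition fp is vpplus} (namely the equality $F_p^\T = V_p^+$), and Proposition~\ref{proposition fp is vpplus} is in turn proved by appealing to Lemma~\ref{sigmapq=vpp} --- exactly the statement you are trying to prove. The logical order in the paper is: Lemma~\ref{sigmapq=vpp}, then Proposition~\ref{proposition fp is vpplus}, then Corollary~\ref{edges in}; you have run the chain backwards. The same objection applies to invoking $\eta_p(q) \neq 0$ for $q \in V_p^+$: at this stage you only know $\eta_p$ is nonzero on $F_p^\T$, and you do not yet have $V_p^+ \subseteq F_p^\T$. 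Your two alternative sketches at the end (the Morse--Smale argument and the induction on $\lambda_q - \lambda_p$) would avoid the circularity, but you leave them incomplete and flag the key gap yourself; in particular, the Morse--Smale claim (``Morse--Smale precisely because $\mu$ is index increasing'') is stated in a form that is not obviously correct --- index increasing is necessary for transversality of stable and unstable manifolds but need not be sufficient in general.

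The paper's actual proof sidesteps all of this and is much shorter. It reduces to the case of a single edge $\overrightarrow{p\,q} \in E^o$, and there argues directly that $\zeta_p(q) \neq 0$: by the GKM relation \eqref{GKM cd eq}, if $\zeta_p(q) = 0$ then $w(\overrightarrow{p\,q})$ would divide $\zeta_p(p) = \Lambda_p^-$, which is impossible because $w(\overrightarrow{p\,q})$ is a \emph{negative} weight at $p$ (so not one of the factors of $\Lambda_p^-$) and the isotropy weights at a fixed point of a toric manifold are linearly independent. Combined with formula \eqref{gt theta formula}, which expresses $\zeta_p(q)$ as a sum over $\Sigma_p^q$, this forces $\Sigma_p^q \neq \emptyset$ for a single oriented edge. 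For a longer increasing path $p = r_0 \to r_1 \to \cdots \to r_m = q$ one applies the single-edge case to each $\overrightarrow{r_{i-1}\,r_i}$ and concatenates paths in $E_{can}$. This argument uses only $E_{can} \subseteq E^o$, the GKM condition, linear independence of weights, and the Goldin--Tolman formula --- none of the later results --- and in particular never needs the equality $V_p^+ = F_p^\T$.
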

\begin{proof}
If $\sum_p^q\neq \emptyset$ then clearly $q\in V_p^+$. Vice versa, suppose that $q\in V_p^+$. Assume first that there exists a path from $p$ to $q$ composed by one edge $\overrightarrow{p\,q}\in E^o$.
Note that the GT-canonical class $\zeta_p$ does not vanish when restricted to $q$. Indeed, $\zeta_p(q)=0$ and condition \eqref{GKM cd eq} would imply that $ w(\overrightarrow{p\,q})$ divides $\zeta_p(p)=\Lambda_p^-$, which contradicts the assumption about linear independence of weights at $p$. 
By \eqref{gt theta formula} we conclude that
$\sum_p^q\neq \emptyset$. If the path from $p$ to $q$ is composed by edges $\overrightarrow{p\,r_1}=\overrightarrow{r_0\,r_1},\ldots ,\overrightarrow{r_{m-1}\,r_m}=\overrightarrow{r_{m-1}\,q}$, each of them in $E^o$, then the preceding argument implies that the sets of paths $\sum_{r_0}^{r_1},\ldots,\sum_{r_{m-1}}^{r_m}$
are all nonempty, and so $\sum_p^q$ is nonempty as well.
\end{proof}
\begin{prop}\label{proposition fp is vpplus}
For any $p\in M^\T$, let $\zeta_p$ be the GT-canonical class. Then for any $q \in M^\T$ have
$$\zeta_p(q) \neq 0 \Leftrightarrow q \in V_p^+.$$
Together with Proposition \ref{pd is gt} this implies 
$$F_p^\T=V_p^+.$$
\end{prop}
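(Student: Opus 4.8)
I will prove the sharper statement $\textrm{supp}(\zeta_p)=V_p^+$, from which $F_p^\T=V_p^+$ is immediate: by Proposition \ref{pd is gt} we have $\zeta_p=\eta_p$, and by Definition \ref{eqcoh poincare duals} the support of $\eta_p$ is exactly $F_p^\T$ (for $q\in F_p^\T$ the value $\eta_p(q)=\chi_\T(N(F_p)|_q)$ is a product of nonzero weights, hence nonzero in the integral domain $\SS(\ft^*)$, while $\eta_p(q)=0$ for $q\notin F_p$), so $\textrm{supp}(\zeta_p)=F_p^\T$ and the identity $\textrm{supp}(\zeta_p)=V_p^+$ forces $F_p^\T=V_p^+$. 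One inclusion is already in hand: if $q\notin V_p^+$ then $\Sigma_p^q=\emptyset$ by Lemma \ref{sigmapq=vpp}, and since formula \eqref{gt theta formula} is a sum over $\Sigma_p^q$ we get $\zeta_p(q)=0$; thus $\textrm{supp}(\zeta_p)\subseteq V_p^+$, which also re-proves Lemma \ref{one inclusion}.

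For the reverse inclusion I would show, by induction on $\lambda_q-\lambda_p$, that $q\in V_p^+$ implies $\zeta_p(q)\neq 0$. This is a legitimate induction parameter: by Lemma \ref{sigmapq=vpp} there is a path from $p$ to $q$ in $(V,E_{can})$, and every edge of $E_{can}$ raises the index by exactly one, so $\lambda_q-\lambda_p$ is the common length of all such paths. The base case $q=p$ is clear, since $\zeta_p(p)=\Lambda_p^-=\prod_{w_j\in W_p^+}w_j\neq 0$. For the inductive step, fix a path from $p$ to $q$ in $(V,E_{can})$, let $q'$ be its penultimate vertex, so that $\overrightarrow{q'\,q}\in E_{can}\subseteq E^o$, $\lambda_{q'}=\lambda_q-1$ and $q'\in V_p^+$; by the inductive hypothesis $\zeta_p(q')\neq 0$, hence $q'\in F_p$ by the first paragraph, and so $\zeta_p(q')=\eta_p(q')=\prod_{r\in\Mt\setminus F_p,\ \overrightarrow{r\,q'}\in E}w(\overrightarrow{r\,q'})$ is a product of pairwise non-proportional elements of the $\Z$-basis of $\ell^*$ formed by the isotropy weights at $q'$ (using also Lemma \ref{lemma index one}, i.e.\ $\Theta\equiv 1$, if one prefers to argue directly from \eqref{gt theta formula}). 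Such a product is divisible by the primitive weight $w:=w(\overrightarrow{q'\,q})$ only if $\pm w$ occurs among its factors, i.e.\ only if the unique edge at $q'$ carrying the weight $\pm w$ — namely $\overrightarrow{q'\,q}$ — has its other endpoint outside $F_p$; equivalently, $w\mid\zeta_p(q')$ iff $q\notin F_p$. Granting that $q\in F_p$, we conclude $w\nmid\zeta_p(q')$, and then the GKM compatibility \eqref{GKM cd eq} on the edge $\overrightarrow{q'\,q}$, $\pi_{\overrightarrow{q'\,q}}(\zeta_p(q))=\pi_{\overrightarrow{q'\,q}}(\zeta_p(q'))$, has nonzero right-hand side (since $w$ is primitive, $\ker\pi_{\overrightarrow{q'\,q}}=(w)$), whence $\zeta_p(q)\neq 0$; alternatively, once $q\in F_p$ one simply has $\zeta_p(q)=\eta_p(q)\neq 0$ directly.

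The step that does the real work, and which I expect to be the main obstacle, is thus the claim: \emph{if $q'\in F_p$ and $\overrightarrow{q'\,q}\in E^o$ then $q\in F_p$} — i.e.\ a $\mu$-increasing edge leaving a fixed point of $F_p$ stays inside $F_p$. This genuinely uses the index-increasing hypothesis (it is false in general) and the toric structure: $F_p$ is itself a symplectic toric manifold whose moment polytope is the face $\mathcal F_p\cap\psi(M)$ of $\psi(M)$, and the claim amounts to showing that the edges of this face at the vertex $\psi(q')$ comprise \emph{all} the $\mu$-increasing edges of $\psi(M)$ at $\psi(q')$, equivalently that the number of $\mu$-decreasing edges of $F_p$ at $q'$ equals $\lambda_{q'}-\lambda_p$ (the inequality $\geq$ is automatic; the content is $\leq$). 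I would establish this by strengthening the inductive hypothesis above to also carry the statement ``every $\mu$-increasing edge of $M$ out of $q$ leads into $F_p$'' — its base case $q=p$ being the purely polytopal observation that $\mathcal F_p=\psi(p)+\R\langle W_p^-\rangle$ is spanned by exactly the increasing-edge directions at $p$ — and using Lemma \ref{lemma Fp bigger index} and Lemma \ref{lemma below q in hq} (applied to the toric manifold $F_p$, on which $\mu$ restricts to a generic component of the moment map, and to the flow-downs) to propagate it along the $E_{can}$-path. This strengthened statement is precisely the content of Corollary \ref{edges in}, so the same induction that proves it proves the present proposition; putting the two inclusions together yields $\textrm{supp}(\zeta_p)=V_p^+$, hence $\zeta_p(q)\neq 0\Leftrightarrow q\in V_p^+$, and, combined with $\textrm{supp}(\zeta_p)=F_p^\T$, the equality $F_p^\T=V_p^+$.
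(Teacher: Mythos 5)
Your forward inclusion $\mathrm{supp}(\zeta_p)\subseteq V_p^+$ and the reduction of the target identity to $\mathrm{supp}(\zeta_p)=V_p^+$ are correct and match the paper. The problem is the reverse inclusion, where your proof has a genuine gap that you yourself flag. Your induction on $\lambda_q-\lambda_p$ needs, at each step, the claim ``if $q'\in F_p^\T$ and $\overrightarrow{q'\,q}\in E^o$ then $q\in F_p^\T$.'' You correctly identify this as Corollary~\ref{edges in}. But in the paper, Corollary~\ref{edges in} is \emph{derived from} Proposition~\ref{proposition fp is vpplus} (its proof literally begins ``By Proposition~\ref{proposition fp is vpplus} we know that $F_p^\T=V_p^+$\,...''), so you cannot invoke it here without circularity. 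Your fallback --- proving the claim within the same induction using Lemmas~\ref{lemma Fp bigger index} and~\ref{lemma below q in hq} applied to the toric manifold $F_p$ --- is only a plan, not an argument, and it has a concrete obstruction: both lemmas live under the standing convention that $\mu$ is index increasing on the ambient manifold, whereas the paper explicitly remarks (in the proof of Lemma~\ref{lemma Fp bigger index}) that ``the oriented graph associated to $F_p$ is not necessarily index increasing with respect to indices computed in $F_p$,'' so ``applying Lemma~\ref{lemma below q in hq} to $F_p$'' is not licensed. The counting reduction you describe (showing the number of $\mu$-decreasing edges of $F_p$ at $q$ is exactly $\lambda_q-\lambda_p$) is precisely what needs to be proven, and nothing in the sketch closes that loop.

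The paper's own proof of the reverse inclusion avoids all of this with a positivity argument you did not use: since $\Sigma_p^q\neq\emptyset$ by Lemma~\ref{sigmapq=vpp} and $\Theta\equiv 1$ by Lemma~\ref{lemma index one}, every summand in formula~\eqref{gt theta formula} for $\zeta_p(q)$ evaluates to a strictly positive number on the generic vector $\overline{\xi}$ (each $\Lambda_q^-$, each $w(\overrightarrow{r_{i-1}\,r_i})$, and each difference $\psi(r_i)-\psi(r_{i-1})$ or $\psi(q)-\psi(r_{i-1})$ pairs positively with $\overline{\xi}$), so the sum cannot vanish. This gives $\zeta_p(q)\neq 0$ directly, with no appeal to $F_p$-membership, and then Corollary~\ref{edges in} follows afterwards as a consequence rather than being needed as an input. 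To repair your argument you would need an independent proof of Corollary~\ref{edges in}; alternatively, replace the inductive step with the paper's positivity observation.
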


\begin{proof}
If $\zeta_p(q) \neq 0$ then by \eqref{gt theta formula} the set $\sum_p^q$ is nonempty, hence $q \in V_p^+$.
Now consider $q \in V_p^+$. By Lemma \ref{sigmapq=vpp} we have that $\sum_p^q$ is not empty.  We use formula \eqref{gt theta formula} quoted from \cite{GT} to analyze $\zeta_p(q)$. 
Using Lemma \ref{lemma index one} observe that each summand in \eqref{gt theta formula} for $\zeta_p(q)$ is positive (in the sense that it gives a positive number when evaluated on $\overline{\xi}$), therefore $\zeta_p(q) \neq 0.$
\end{proof}
\begin{corollary}\label{edges in}
 If $q\in F_p^\T$ and $\overrightarrow{q\,q_0}\in E^o$ then $q_0\in F_p^\T$.  
 As a consequence, the normal bundle of $F_p$ at $q$ (denoted by $N(F_p)|_q$) is a subbundle of the negative normal bundle of $\mu$ at $q$ (denoted by $N_q^-$).
\end{corollary}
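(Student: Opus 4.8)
The plan is to obtain both assertions as quick consequences of Proposition~\ref{proposition fp is vpplus}, which in the index increasing case identifies $F_p^\T$ with $V_p^+$, the set of vertices that can be reached from $p$ by an increasing path in $E^o$.

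For the first assertion, assume $q\in F_p^\T$ and $\overrightarrow{q\,q_0}\in E^o$. By Proposition~\ref{proposition fp is vpplus} we have $q\in V_p^+$, so there is an increasing path $\gamma=(r_0=p,r_1,\ldots,r_m=q)$ with $\overrightarrow{r_i\,r_{i+1}}\in E^o$ for all $i$. Appending the edge $\overrightarrow{q\,q_0}$ to $\gamma$ yields an increasing path from $p$ to $q_0$ in $E^o$, hence $q_0\in V_p^+=F_p^\T$. (One can equally phrase this using Definition~\ref{def vpplus} directly, without invoking $F_p^\T=V_p^+$, but we will use the identification again below.)

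For the consequence, recall from \eqref{eeKc normal} (and its cohomological analogue \eqref{eec normal}) that the weights of the $\T$-representation on $N(F_p)|_q$ are precisely the $w(\overrightarrow{r\,q})$ with $r\in M^\T\setminus F_p$ joined to $q$ by an edge of $\Gamma$, while the weights of $N_q^-$ form the set $W_q^+$, i.e.\ the weights $w(\overrightarrow{s\,q})$ carried by the edges $\overrightarrow{s\,q}\in E^o$ ending at $q$. Since the isotropy action at $q$ is toric, the weights in $W_q$ are pairwise distinct and $T_qM$ splits as the direct sum of the corresponding one-dimensional weight spaces; consequently it suffices to show that every weight of $N(F_p)|_q$ lies in $W_q^+$, equivalently that every such $r$ satisfies $\mu(r)<\mu(q)$, i.e.\ $\overrightarrow{r\,q}\in E^o$. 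If this failed for some $r$, then $\overrightarrow{q\,r}\in E^o$, and applying the first assertion with $q_0=r$ would force $r\in F_p^\T$, contradicting $r\notin F_p$. Therefore the weight set of $N(F_p)|_q$ is contained in $W_q^+$, and hence $N(F_p)|_q$ is a sub-representation, i.e.\ a subbundle, of $N_q^-$.

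I do not expect a genuine obstacle here; the only point that requires care is the bookkeeping of orientations --- distinguishing an (unoriented) isotropy sphere joining $r$ and $q$ from its two oriented representatives, and correctly matching $W_q^+$ with the edges of $E^o$ terminating at $q$ as recorded in Section~\ref{GKM section}. Everything else is immediate once Proposition~\ref{proposition fp is vpplus} is available.
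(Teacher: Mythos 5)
Your proof is correct and takes essentially the same route as the paper: both assertions are deduced from Proposition~\ref{proposition fp is vpplus} (the identification $F_p^\T=V_p^+$), with the second following from the first by the same orientation argument showing that every edge from $q$ to a vertex outside $F_p$ must point toward $q$ in $E^o$, hence contributes a weight in $W_q^+$. The only cosmetic difference is that you extend the increasing path to $q_0$ directly, whereas the paper passes through the inclusions $V_q^+\subseteq V_p^+$ and $q_0\in F_q^\T$; these are interchangeable.
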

\begin{proof}
By definition of $V_p^+$ it follows that if $q\in V_p^+$ then $V_q^+\subseteq V_p^+$. 
By Proposition \ref{proposition fp is vpplus} we know that $F_p^\T=V_p^+$, which implies that if $q\in F_p^\T$ then $F_q^\T\subseteq F_p^\T$.
From the definition of $F_q$ it is straightforward to see that if $\overrightarrow{q\,q_0}\in E^o$ then $q_0\in F_q^\T$, and the first claim follows. 
As a consequence we have that $N(F_p)|_q$ splits as a direct sum of line bundles $\mathbb{L}_i$, each of them being the tangent bundle at $q$ of the sphere associated to the edge $\overrightarrow{q_i\,q}\in E^o$, for some $q_i\notin F_p$. This implies the second claim.
 \end{proof}
\subsection{The proof of Theorem \ref{main cohomology}.}

We are now ready to prove Theorem \ref{main cohomology}. It follows immediately from the Proposition below.

\begin{prop} \label{prop classes equal}
Let $(M,\omega,\T,\psi)$ be a symplectic toric manifold, together with 
a choice of a generic component of the moment map $\mu=\psi^{\overline{\xi}} \colon M\to \R$, not necessarily index increasing. Then for each $p\in M^\T$
the i-canonical class $\tau_p$ exists, and is equal to the equivariant Poincar\'e dual $\eta_p$ to flow-up submanifold $F_p$.
\end{prop}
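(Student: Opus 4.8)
The plan is to show that the equivariant Poincaré dual $\eta_p$ to the flow-up manifold $F_p$ satisfies the two defining conditions of an i-canonical class in equivariant cohomology, namely $\ii_p(\eta_p)=1$ and $\ii_q(\eta_p)=0$ for all $q\in M^\T\setminus\{p\}$; uniqueness then finishes the identification. The easy half is the vanishing at points $q\notin F_p^\T$: here $\eta_p(q)=0$, so by the cohomological analogue of Proposition \ref{prop indexproperties}(ii) (see Remark \ref{index prop coh}) the local index $\ii_q(\eta_p)$ vanishes. The normalization $\ii_p(\eta_p)=1$ follows from the fact that $N(F_p)|_p=N_p^-$, so $\eta_p(p)=\Lambda_p^-$ is $\Lambda_p^-$ times a unit, and again Remark \ref{index prop coh} (the cohomological version of Proposition \ref{prop indexproperties}(ii)) gives $\ii_p(\eta_p)=1$. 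The substantive content is therefore: for every $q\in F_p^\T\setminus\{p\}$ one must show $\ii_q(\eta_p)=0$.

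For this, first I would invoke Remark \ref{index prop coh}: since $\eta_p\in H_\T^{2\lambda_p}(M;\Z)$, the class $\ii_q(\eta_p)\,\eta_q$ has degree $2\lambda_p$, while $\eta_q$ has degree $2\lambda_q$; hence $\ii_q(\eta_p)$ lives in $H_\T^{2(\lambda_p-\lambda_q)}(\p;\Z)$, which forces $\ii_q(\eta_p)=0$ whenever $\lambda_q>\lambda_p$. So the only remaining case is $q\in F_p^\T\setminus\{p\}$ with $\lambda_q\le\lambda_p$ — but this case cannot occur at all, because of the structure of $F_p$: by Lemma \ref{lemma Fp bigger index} (which, under the running ``index increasing'' convention applies directly, but whose conclusion we only need here via the general fact that $p$ is the \emph{minimum} of $\mu|_{F_p}$ and that $\mu|_{F_p}$ increases along each oriented edge of the sub-GKM-graph of $F_p$, by Lemma \ref{lemma path to minimum}) every $q\in F_p^\T\setminus\{p\}$ satisfies $\lambda_q>\lambda_p$. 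Hence there is nothing left to check. In the non-index-increasing case one must be slightly more careful: the argument of Lemma \ref{lemma Fp bigger index} needs $\mu|_{F_p}$ to be \emph{index increasing with respect to the indices computed in $M$} along oriented edges of $F_p$, which is automatic since every edge of $F_p$ in the oriented GKM graph of $M$ is still an oriented edge, and $\mu$ strictly increases along it; combined with Lemma \ref{lemma path to minimum} applied to $X=F_p$ this gives $\lambda_q>\lambda_p$ for all $q\in F_p^\T\setminus\{p\}$.

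Having shown $\eta_p$ is an i-canonical class, existence is established; uniqueness follows by repeating \emph{verbatim} the argument of Proposition \ref{uniqueness} (using the GKM condition \eqref{GKM cd eq} in place of its K-theoretic counterpart \eqref{k cdt}, and the cohomological analogue of Proposition \ref{prop indexproperties}(ii) from Remark \ref{index prop coh}) — this is already remarked just after Definition \ref{definition canonical classes cohom}. Thus $\tau_p=\eta_p$, and in particular Theorem \ref{main cohomology} follows since $\{\eta_p\}_{p\in M^\T}$ is a basis of $H_\T^*(M;\Z)$ over $H_\T^*(\p;\Z)$. The one point demanding genuine care — the main obstacle — is the non-index-increasing case of the claim ``$q\in F_p^\T\setminus\{p\}\Rightarrow\lambda_q>\lambda_p$'': one must make sure that although $F_p$ itself need not be index increasing \emph{as an abstract toric manifold}, the indices relevant to the degree bookkeeping are those computed in $M$, and along $\mu$-increasing edges these do strictly increase; I would isolate this as a short lemma or a remark before the proof to keep the argument clean.
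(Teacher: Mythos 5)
Your proof has a genuine gap in the non--index-increasing case, which is precisely the case that makes this proposition more than a restatement of the index-increasing result. Your key step is the claim that $\lambda_q>\lambda_p$ for all $q\in F_p^\T\setminus\{p\}$, after which the degree bookkeeping of Remark \ref{index prop coh} forces $\ii_q(\eta_p)=0$. But this claim is Lemma \ref{lemma Fp bigger index}, and that lemma is proved under the \emph{running convention of index-increasingness} stated at the start of Subsection \ref{TL}; it is simply false in general. Your attempted repair conflates two distinct statements: ``$\mu$ strictly increases along oriented edges'' (always true) and ``the index $\lambda$ strictly increases along oriented edges'' (this \emph{is} Definition~\ref{ii and nii}, the index-increasing hypothesis, which is expressly not assumed here). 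Indeed, for the Hirzebruch surface of Figure~\ref{fig:hirzebruchclasses}, taking $\overline{\xi}$ so that the minimum and maximum of $\mu$ are adjacent vertices of the quadrilateral, one gets two fixed points $p$ and $q$ joined by an oriented edge with $\lambda_p=\lambda_q=1$; then $q\in F_p^\T\setminus\{p\}$ and $\lambda_q=\lambda_p$, so the degree count only places $\ii_q(\eta_p)$ in $H_\T^0(\p;\Z)\cong\Z$ and says nothing about its vanishing.

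The paper's proof does not use any comparison of $\lambda_q$ with $\lambda_p$ at this point. Instead it computes $\ii_q(\eta_p)$ directly by the combinatorial algorithm of Section~\ref{section local index}: the restriction $\eta_p(q)=\prod_{q_j\notin F_p}w_j$ is fed through the recipe for $\kappa\circ\widetilde r$, and after localization on $\widetilde{H}_q^{\epsilon}\cong\C\pp^{\lambda_q}$ the answer factors as a polynomial times $\ii(\mathbf{1}_{\C\pp^s})$, where $s$ is the number of positive weights at $q$ tangent to $F_p$. Since $q\neq p$ and $p$ is the unique minimum of $\mu|_{F_p}$, there is at least one oriented edge of $F_p$ ending at $q$, so $s\geq 1$ and $\ii(\mathbf{1}_{\C\pp^s})=0$. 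This computation is what makes the statement work without any index-increasing assumption; to close the gap you would need to reproduce it (or find another argument that does not pass through $\lambda_q>\lambda_p$).
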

If the moment map $\mu=\psi^{\overline{\xi}} \colon M\to \R$ is index increasing (thus GT-canonical classes exist) then the above proposition, together with Proposition \ref{pd is gt}, implies that for each $p\in M^\T$
the following equivariant cohomology classes are the same:
\begin{itemize}
\item the Poincar\'e dual $\eta_p$ to flow-up submanifold $F_p$;
\item the GT-canonical class $\zeta_p$;
\item the i-canonical class $\tau_p$.
\end{itemize}

\begin{proof}
Fix $p\in M^\T$. We will show that the equivariant Poincar\'e dual $\eta_p$ to the flow-up submanifold $F_p$ satisfies: $\ii_p(\eta_p)=1$ and $\ii_q(\eta_p)=0$ for each $q\in M^\T\setminus \{p\}$. Since $\eta_p$ is also a Kirwan class, this proves that $\eta_p$ is the i-canonical class at $p$. 
As $\eta_p(p)=\Lambda_p^-$, Remark \ref{index prop coh} implies that $\ii_p(\eta_p)=1$.
For $q \in M^\T \setminus F_p$ we have $\eta_p(q)=0$, thus $\ii_q(\eta_p)=0$. 
Consider a point $q \in F_p \setminus \{p\}$. 
Let $q_1,\ldots, q_{\lambda_q},\ldots, q_n \in M^\T$ be the fixed points connected to $q$ by an edge in the GKM graph of $M$, and let $w_1, \ldots,w_{\lambda_q}, \ldots,w_n$ denote the weights on the corresponding oriented edges, with $\Lambda_q^-=\prod_{j=1}^{\lambda_q}w_j.$
By definition of equivariant Poincar\'e dual we have that
$$\eta_p(q)=\prod_{\stackrel{1 \leq j \leq n}{q_j \notin F_p}} w_j=\prod_{\stackrel{1\leq j \leq \lambda_q}{ q_j \notin F_p }} w_ j\, \cdot \, \prod_{\stackrel{\lambda_q+1 \leq j \leq n}{q_j \notin F_p}}w_j\,.$$
To calculate the index $\ii_q(\eta_p)$ we use the algorithm and the notation of Section \ref{section local index}. Observe that
\begin{align*}
f_0&=\prod_{\stackrel{1\leq j \leq \lambda_q}{ q_j \notin F_p }} (w_ j+w_0)\, \cdot \, \prod_{\stackrel{\lambda_q +1\leq j \leq n}{q_j \notin F_p}}w_j,\\
f_i&=
\begin{cases} 0 & \textrm{ if }q_i \notin F_p\\ 
\displaystyle\prod_{\stackrel{1\leq j \leq \lambda_q}{ q_j \notin F_p }} (w_ j-w_i)\, \cdot \, \prod_{\stackrel{\lambda_q+1 \leq j \leq n}{q_j \notin F_p}} w_j & \textrm{ if }q_i \in F_p
\end{cases}
\end{align*}
Therefore the local index $\ii_q(\eta_p)$ is equal to
$$ \ii_q(\eta_p)=\frac{\displaystyle \prod_{\stackrel{1\leq j \leq \lambda_q}{ q_j \notin F_p }} (w_ j+w_0)\, \cdot \, \prod_{\stackrel{\lambda_q+1 \leq j \leq n,}{q_j \notin F_p}}w_j}{\displaystyle\prod_{1\leq j\leq \lambda_q}(w_j+w_0)}
+ \displaystyle\sum_{\stackrel{1 \leq i \leq n}{q_i \in F_p}}\,\frac{\displaystyle\prod_{\stackrel{1\leq j \leq \lambda_q}{ q_j \notin F_p }} (w_ j-w_i)\, \cdot \, \prod_{\stackrel{\lambda_q+1 \leq j \leq n}{q_j \notin F_p}} w_j}{(-w_i-w_0)\displaystyle\prod_{\stackrel{1\leq j \leq \lambda_q}{j \neq i}}(w_j-w_i)}$$
$$=\left( \displaystyle\prod_{\stackrel{\lambda_q+1 \leq j \leq n}{q_j \notin F_p}}w_j\right) \left( \frac{1}{\displaystyle\prod_{\stackrel{1\leq j \leq \lambda_q}{ q_j \in F_p} } (w_ j+w_0)}+
 \sum_{\stackrel{1 \leq i \leq n}{q_i \in F_p}}\,\frac{1}{(-w_i-w_0)\displaystyle\prod_{\stackrel{1\leq j \leq \lambda_q}{j \neq i,\,q_j \in F_p} }(w_j-w_i)} \right)$$
$$=\left(\displaystyle\prod_{\stackrel{\lambda_q+1 \leq j \leq n}{q_j \notin F_p}}w_j\right)  \ii (\mathbf{1}_{\C \pp ^s}),$$
where $s$ is the number of weights $w_1,\ldots,w_{\lambda_q}$ 
appearing in the representation of $\T$ on $T_qF_p$.
Note that $s\neq 0$ because if $s=0$ then $q$ would be the minimum of $\mu$ on $F_p$ (c.f. Lemma \ref{lemma path to minimum}) which contradicts our assumption that $q \neq p$. 
As $\ii (\mathbf{1}_{\C \pp ^s})=0$ for $s>0$ it follows that $\ii_q(\eta_p)=0$.
\end{proof}

\appendix
\section{An explicit description of the Kirwan map}
The map $\kappa$ used in the definition of local index is in fact the surjective Kirwan map relating the equivariant K-theory or cohomology ring of a manifold $X$ with that of the reduced spaces. Indeed, below we describe a combinatorial algorithm for calculating the Kirwan map, and the reader can compare it with the combinatorial
algorithm for calculating the local index in Section \ref{section local index}. 
For simplicity we only deal with the equivariant cohomology setting.  

Suppose that $X=X^{2d}$ is a $2d$-dimensional symplectic manifold equipped with an effective Hamiltonian toric action of torus $\T=T^d$. Let $\psi' \colon X \rightarrow Lie(T^d)^* \cong \R^d$ be a choice of moment map. Choose a subtorus $T^k \hookrightarrow \T$, $k <d$, and consider the induced  action of $T^k$ on $X$. Let $\pi \colon Lie(T^d)^* \rightarrow Lie(T^k)^*$ be the map induced by the inclusion $T^k \hookrightarrow T^d$. Then $\varphi = \pi \circ \psi' \colon X \rightarrow \R^k$ is a moment map for this action. Take any regular value $a$ of the function $\varphi$. Then $X_{red}:=\varphi^{-1}(a) /T^k$ is a symplectic toric orbifold. If the $T^k$ action on $\varphi^{-1}(a)$ is free then $\varphi^{-1}(a) /T^k$ is a manifold. Moreover it is equipped with a Hamiltonian action of the residual torus $\mathbb{\KK}:=T^d/T^k$. 

By a theorem of Kirwan the following map (called Kirwan map)
 $$\kappa \colon H^*_{\T}(X;\Z)\twoheadrightarrow H^*_\T(\varphi^{-1}(a);\Z)  \simeq H^*_{ \KK }(X_{red};\Z)$$
is surjective.
We describe $\kappa$ explicitly in the situation that appears in our algorithm for calculating the local index (where $X=H_q \times S^2$ and $X_{red}=\widetilde{H}^{\epsilon}_q$), namely when:
\begin{itemize}
 \item $k=1$ so $T^k\cong S^1$, henceforth denoted $\SS^1$ to avoid confusion, 
 \item $X_{red}$ is a manifold and 
 \item the level of the reduction $a$ is close to the maximum of $\varphi$, i.e.\ the hyperplane $\pi^{-1}(a)$ cuts the moment map image of $X$ close to the fixed point $q_0$ of $X$ where $\varphi$ attains its maximum. 
 \end{itemize}
 
The moment map image of $X_{red}$ is the intersection of $\psi'(X)$ with the affine hyperplane $ \pi^{-1} (a) $ in $Lie(T^d)^*$. 
The fixed points of $X_{red}$ correspond to the points of intersection of this affine hyperplane with the edges of the $1$-skeleton of the moment polytope of $X$; the set of these edges is denoted by $E$. 
An example is presented in Figure \ref{fig:firstkirwanmapexample}. The weights of the $T^d=T^2$ action at $p_0$ are $w_1$ and $w_2$. The affine hyperplane is perpendicular to the vector $v=w_1 +w_2$. 

Recall the description of the kernel of Kirwan map from the work of Goldin \cite{G} and Tolman-Weitsman \cite{TWquotients}, and observe that in our situation any class in $H^*_{\T}(X;\Z)$ which has value $0$ when restricted to $q_0$ is in the kernel. Therefore $$\kappa(\alpha) =\kappa( \alpha(q_0) \cdot \mathbf{1})$$ for any $\alpha$ in $H^*_{\T}(X;\Z)$. This reduces our problem to analyzing only the classes of the form $f \cdot \mathbf{1}$, with $f \in H_\T^*(\p;\Z)$. 

We describe $\kappa(f \cdot \mathbf{1})$ by calculating its restrictions to the fixed points $X_{red}^{\KK}$. Let $p_i \in X_{red}^{\KK}$ be any fixed point. Denote by $q_i,q_0\in X^{\T}$ the fixed points in $X$ connected by an edge $\overrightarrow{q_i q_0}\in E$ such that $p_i$ is the intersection of the edge $\overrightarrow{q_i q_0}$ with the affine hyperplane $ \pi^{-1} (a) $. Denote by $S_i^2$ the sphere in $X$ corresponding to the edge $\overrightarrow{q_i q_0}$ and by $H_i$ the subtorus of $\T$ fixing $S_i^2$, i.e. $H_i=\exp (\{\xi\,|\, w(\overrightarrow{q_i q_0}) (\xi)=0\})$. Note that $S_i^2 \cap  \varphi^{-1} (a)$ is a circle, denoted by $\mathcal{C}_i$, equipped with a free $\mathbb{S}^1$-action and that 
$$\mathcal{C}_i /\SS^1=\{p_i\}.$$

To find the weights of the $\KK$ action on $T_{p_i}X_{red}$ we proceed as in \cite[Example 3]{GH}. Observe that $H_i$ is complementary to $\SS^1$ in $\T$, so $H_i \cong \T/\SS^1=\KK$ (this follows from the assumption that the reduced space is a manifold). Therefore the $\T/\SS^1$ action on $T_{p_i}X_{red}$ is isomorphic to the $H_i$ action on this space. The weights of the $\KK=\T/\SS^1$ action on $T_{p_i}X_{red}$ are obtained by projecting the $\T$ weights at $q_0$ to $\mathfrak{h}_i^*=Lie(H_i)^*$. (Using $q_i$ instead of $q_0$ gives the same result as the $\T$ weights at $q_i$ differ from those at $q_0$ by a multiple of $w(\overrightarrow{q_i q_0})$, so the difference vanishes after applying the projection to $\mathfrak{h}_i^*$). Note also that the weights of the $\T/\SS^1$ action on $T_{p_i}X_{red}$, together with $w(\overrightarrow{q_i q_0})$, form a $\Z$-basis of the lattice $\ell^*$.

We need to find the image of $f \cdot \mathbf{1} \in H^*_{\T}(X;\Z)$ under the composition
$$H^*_{\T}(X;\Z) \rightarrow H^*_{\T}(\mathcal{C}_i;\Z) \stackrel{\cong}{\rightarrow} H_{\T/\SS^1}^*(\{p_i\};\Z)=H_{H_i}^*(\p;\Z) ,$$
where the first map is induced by the inclusion $\mathcal{C}_i \hookrightarrow X$ and
 sends  $f \cdot \mathbf{1} \in H^*_{\T}(X;\Z)$ to  $f \cdot \mathbf{1} \in H_\T^*(\mathcal{C}_i;\Z)$. 
As for the second map, on classes of the form $f \cdot \mathbf{1}$, with $f \in H^*_{\T}(\p;\Z)$, it acts exactly as the map $H^*_{\T}(\p;\Z) \rightarrow H^*_{H_i}(\p;\Z)$ we used above to find the weights. Note that the value $\kappa( f \cdot \mathbf{1}) (p_i)$ is in the $\Z$-span of the weights of the $\T/\SS^1$ action on  $T_{p_i}X_{red}$, as it should be.

In conclusion, the procedure for finding the value of the restriction of $\kappa(\alpha)$ to $p_i$ is the following:
\begin{itemize}
\item Present the value $\alpha(q_0) \in H^*_{\T}(\p;\Z)$ in the basis consisting of the $\T/\SS^1$ weights at  $T_{p_i}X_{red}$ and of the weight $w(\overrightarrow{q_i q_0})$.
\item Map such value to $H_{ \KK}^*(\p;\Z)\cong H^*_{H_i}(\p;\Z)$ by sending the weight $w(\overrightarrow{q_i q_0})$ to $0$. 
\end{itemize}
The result is $\kappa(\alpha)$ restricted to the point $p_i$.
Note that starting from $\alpha(q_i)$ instead of $\alpha(q_0)$ gives exactly the same result because $\alpha(q_0)$ and $\alpha(q_i)$ differ by a multiple of the weight $w(\overrightarrow{q_i q_0})$ (see \eqref{GKM cd eq}). By repeating the same argument for each fixed point of $X_{red}$ we obtain the image of $\kappa(\alpha)$ in $H_{\KK}^*(X_{red}^{\KK})$.

In the example in Figure \ref{fig:firstkirwanmapexample}, $\pi^{-1}(a)$ 
 is generated by the vector $(1,-1)$. At $p_1$ we get a $\Z$-basis  $\{w_1-w_2=(1,-1), \,w_1=(1,0)\}$, while at $p_2$ we get a $\Z$-basis  $\{w_2-w_1=(-1,1),\, w_2=(0,1)\}$.
We will find the image of the class $\alpha$ presented in black on the right of Figure \ref{fig:firstkirwanmapexample} (with $\alpha_j=\alpha(q_j)$). The torus $\KK$ is $1$ dimensional, so the dual of its Lie algebra can be identified with $\R[x]$, where $x=w_1-w_2$. At the point $p_1$ our procedure applied to $\alpha_0$ gives 
$$w_1+w_2= -(w_1-w_2)+2w_1 \rightarrow  -(w_1-w_2)+0=-x.$$ 
(Observe that if we use  $\alpha_1$ instead of $\alpha_0$ we indeed get the same result: 
$4w_1+w_2= -(w_1-w_2)+5w_1 \rightarrow -(w_1-w_2)+0=-x.$)
 At the point $p_2$ we get 
 $$w_1+w_2= (w_1-w_2)+2w_2 \rightarrow (w_1-w_2)+0=x.$$
 Therefore $\kappa(\alpha)$ (presented in blue) restricted to $p_1$ gives $-x$, and restricted to $p_2$ gives $x$.
\begin{figure}[htbp]
\begin{center}
\psfrag{p0}{\small{$q_0$}}
\psfrag{p1}{\small{$q_1$}}
\psfrag{p2}{\small{$q_3$}}
\psfrag{p3}{\small{$q_2$}}
\psfrag{p1p}{\small{$p_1$}}
\psfrag{p2p}{\small{$p_2$}}
\psfrag{w1}{\small{$w_1$}}
\psfrag{w2}{\small{$w_2$}}
\psfrag{pi}{{$\pi$}}
\psfrag{l}{Lie$(T^k)^*$}
\psfrag{w12}{\small{$w_1-w_2$}}
\psfrag{xr}{\tiny{$x=w_1-w_2$}}
\psfrag{ww0}{\small{$\alpha_0=w_1+w_2$}}
\psfrag{ww1}{\small{$\alpha_1=4w_1+w_2$}}
\psfrag{ww2}{\small{$\alpha_3=4w_1+7w_2$}}
\psfrag{ww3}{\small{$\alpha_2=w_1+7w_2$}}
\psfrag{x}{{\color{blue}$x$}}
\psfrag{mx}{{\color{blue}$-x$}}
\psfrag{a}{$a$}
\includegraphics[width=14cm]{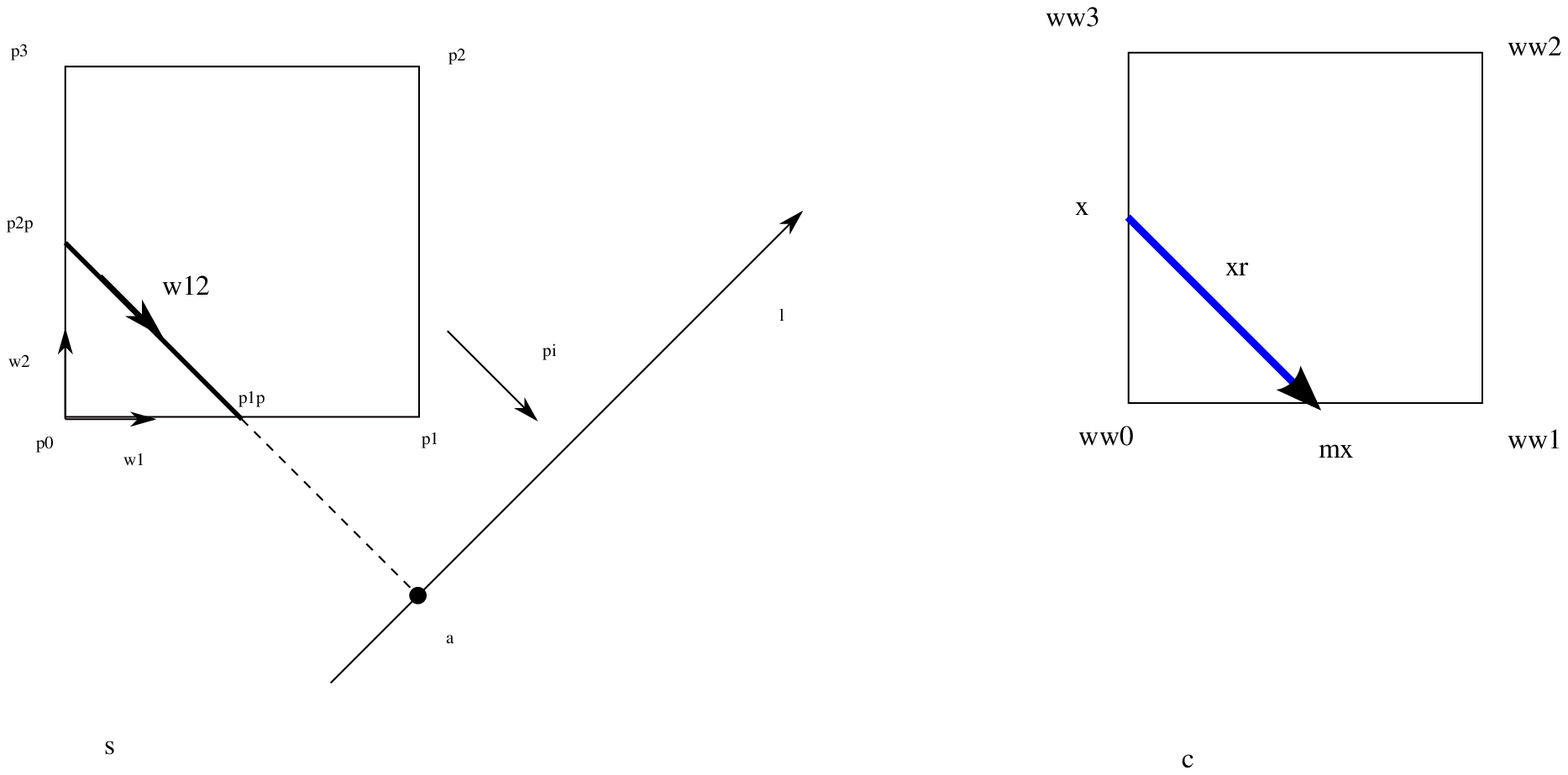}
\caption{An example of a computation of the Kirwan map.}
\label{fig:firstkirwanmapexample}
\end{center}
\end{figure}

Our second example of computation of the Kirwan map goes back to the situation presented in Figure \ref{figure cutting}, i.e.\ the situation we encounter while calculating the local index. 
Given a class $\tau \in H_\T^*(H_q; \Z)$ we want to calculate the class $\kappa(\widetilde{r}(\tau))$ in $\mathcal{K}_{\widetilde{\T}_{q}}((\widetilde{H}^{\epsilon}_q)^{\widetilde{\T}_{q}})$.
\begin{figure}[htbp]
\begin{center}
\includegraphics[width=12cm]{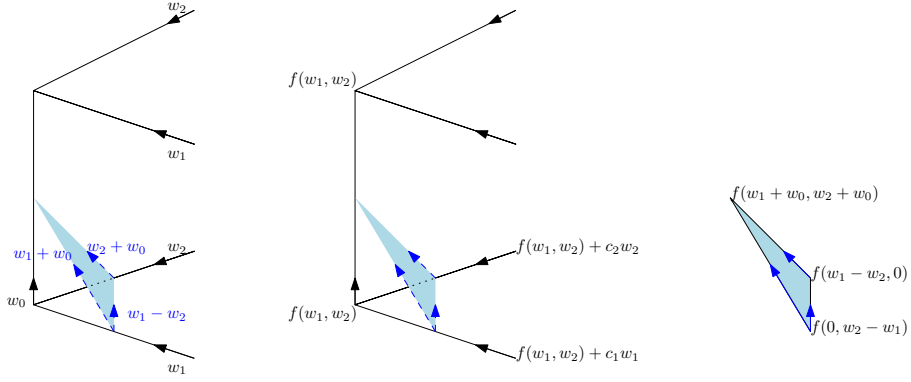}
\caption{A calculation of the Kirwan map in the situation encountered in the definition of local index.}
\label{fig:generalkirwan}
\end{center}
\end{figure}
Figure \ref{fig:generalkirwan} consists of three pictures: the weights around the fixed point $q_0$, the values of $\widetilde{r}(\tau)$ at the fixed points of $H_q \times S^2$ in the neighborhood of $q_0$, and the values of $\kappa(\widetilde{r}(\tau))$ at the fixed points of $\widetilde{H}^{\epsilon}_q$.
The values of $\kappa(\widetilde{r}(\tau))$ at $p_0,\, p_1,\,p_2$, respectively, are calculated in the following way:
\begin{align*}
f(w_1,w_2)&=f(w_1+w_0,w_2+w_0) \rightarrow f(w_1+w_0,w_2+w_0),\\ 
f(w_1,w_2)&=f(w_1, (w_2-w_1)+w_1)  \rightarrow f(0,w_2-w_1),\\
f(w_1,w_2)&=f((w_1-w_2)+w_2, w_2) \rightarrow f(w_1-w_2,0).
\end{align*}

\newpage


\begin{thebibliography}{ZZZ99}
\bibitem[AB]{AB} Atiyah, M.F., Bott, R., The moment map and equivariant cohomology, \emph{Topology} {\bf 23} (1984), 1--28.
\bibitem[AS]{AS} Atiyah, M.F., G.B. Segal, The index of elliptic operators, II, \emph{Ann. of Math.} {\bf 87} (1968), 531--545.
\bibitem[BV]{BV} Berline, N., Vergne, M., Classes carat\'eristiques \'equivariantes. Formule de localisation en cohomologie \'equivariante, \emph{C.R. Acad. Sci. Paris S\'er. I Math.} {\bf 295} (1982), 539--541.
\bibitem[F]{F} Frankel, T., Fixed points and torsion on K\"ahler manifolds, \emph{Ann. of Math.} {\bf 70} (1959), 1--8.
\bibitem[GS]{GoSa} Godinho, L., Sabatini, S., New tools for classifying Hamiltonian circle actions with isolated fixed points,  \emph{Found. Comput. Math.} {\bf 14} (2014), 791--860.
\bibitem[G]{G} Goldin, R., An effective algorithm for the cohomology ring of symplectic reductions, \emph{Geom. Funct. Anal.} {\bf 12} (2002), 567--583.
\bibitem[GH]{GH} Goldin, R., Holm, T., The equiviariant cohomology of Hamiltonian $G$-spaces from residual $S^1$ actions, \emph{Math. Res. Lett.}, {\bf 8} (2001), 67--77.
\bibitem[GT]{GT} Goldin, R., Tolman, S., Towards Generalizing Schubert Calculus in the Symplectic Category, \textit{J. 
 Symplectic Geom.}, {\bf 4} (2009), 449--473.
 \bibitem[GKM]{GKM} Goresky, M., Kottwitz, R., MacPherson, R., Equivariant cohomology, 
Koszul duality, and the localization theorem, \emph{Invent. Math.} \textbf{131} (1998), 25--83. 
\bibitem[GK]{GK} Guillemin, V., Kogan, M., Morse theory on Hamiltonian $G$-spaces and equivariant K-theory,
\emph{J. Differential Geom.} {\bf 66} (2004), 345 -- 375.
\bibitem[GS]{GS} Guillemin, V., Sternberg, S., Convexity properties of the moment mapping,
\emph{Invent. Math.} {\bf 67} (1982), 491--513. 
 \bibitem[GZ02]{GZ} Guillemin, V., Zara, C., Combinatorial formulas for products of Thom classes, \emph{Geometry, mechanics, and dynamics}, 363--405, Springer NY, 2002.
\bibitem[GZ03]{GZgeneratingfamilies} Guillemin, V., Zara, C., The existence of generating families for the cohomology ring of a graph, \emph{Adv. Math.} {\bf 174} (2003) 115--153.
  \bibitem[HL]{HL2}Harada, M., Landweber, G., The K-theory of abelian symplectic quotiens, \emph{Math. Res. Lett.}, {\bf 15} (2008),
  57--72.
\bibitem[H]{Ha} Hattori, A., $S^1$-actions on unitary manifolds and quasi-ample line bundles, \emph{J. Fac. Sci. Univ. Tokyo Sect. IA Math.} \textbf{31} (1985), 433--486.
 \bibitem[HY]{HY} Hattori, A., Yoshida, T., Lifting compact group actions into fiber bundles, \emph{Japan. J. Math.} {\bf 2} (1976), 13--25.
\bibitem[Ki]{Ki} Kirwan, F., \textit{Cohomology of quotients in symplectic and algebraic geometry}, Mathematical Notes {\bf 31}, Princeton University Press, Princeton, NJ, 1984.
\bibitem[R]{R} Rosu, I., Equivariant K-theory and equivariant cohomology, with an appendix by Knutson, A., Rosu, I., \textit{Math. Z.} {\bf 243} (2003), 423--448.
\bibitem[TW]{TWquotients} Tolman, S., Weitsman, J., The cohomology rings of symplectic quotients, \emph{Comm. Anal. Geom.} {\bf 11} (2003), 751--773.
\end{thebibliography}
\end{document}